\documentclass[12pt]{amsart}

\usepackage{psfrag}
\usepackage{color}

\usepackage{graphicx,graphics}
\usepackage{fullpage,amssymb,amsfonts,amsmath,amstext,amsthm,amscd,enumerate,verbatim,tikz,booktabs,subcaption,orcidlink}
\usepackage[T1]{fontenc}
\usetikzlibrary{matrix,arrows}

\begin{document}
\date{\today}

\newtheorem{theorem}{Theorem}[section]
\newtheorem{result}[theorem]{Result}
\newtheorem{fact}[theorem]{Fact}
\newtheorem{conjecture}[theorem]{Conjecture}
\newtheorem{definition}[theorem]{Definition}
\newtheorem{lemma}[theorem]{Lemma}
\newtheorem{proposition}[theorem]{Proposition}
\newtheorem{remark}[theorem]{Remark}
\newtheorem{corollary}[theorem]{Corollary}
\newtheorem{facts}[theorem]{Facts}
\newtheorem{question}[theorem]{Question}
\newtheorem{props}[theorem]{Properties}

\newtheorem{ex}[theorem]{Example}

\newcommand{\notes} {\noindent \textbf{Notes.  }}
\newcommand{\note} {\noindent \textbf{Note.  }}
\newcommand{\defn} {\noindent \textbf{Definition.  }}
\newcommand{\defns} {\noindent \textbf{Definitions.  }}
\newcommand{\x}{{\bf x}}
\newcommand{\z}{{\bf z}}
\newcommand{\B}{{\bf b}}
\newcommand{\V}{{\bf v}}
\newcommand{\T}{\mathcal{T}}
\newcommand{\Z}{\mathbb{Z}}
\newcommand{\Hp}{\mathbb{H}}
\newcommand{\D}{\mathbb{D}}
\newcommand{\R}{\mathbb{R}}
\newcommand{\N}{\mathbb{N}}
\renewcommand{\B}{\mathbb{B}}
\newcommand{\C}{\mathbb{C}}
\newcommand{\dt}{{\mathrm{det }\;}}
 \newcommand{\adj}{{\mathrm{adj}\;}}
 \newcommand{\0}{{\bf O}}
 \newcommand{\av}{\arrowvert}
 \newcommand{\zbar}{\overline{z}}
 \newcommand{\htt}{\widetilde{h}}
\newcommand{\ty}{\mathcal{T}}
\renewcommand\Re{\operatorname{Re}}
\renewcommand\Im{\operatorname{Im}}
\newcommand{\diam}{\operatorname{diam}}
\newcommand{\sign}{\operatorname{sign}}
\newcommand{\ft}{\widetilde{f}}
\newcommand{\Rot}{\mathcal{R}}

\newcommand{\ds}{\displaystyle}
\numberwithin{equation}{section}

\renewcommand{\theenumi}{(\roman{enumi})}
\renewcommand{\labelenumi}{\theenumi}

\title{Equipotentials and folds for quasiregular Mandelbrot sets}

\author{Alastair N. Fletcher\,\orcidlink{0000-0003-1942-6928}}
\email{afletcher@niu.edu}
\address{Department of Mathematical Sciences, Northern Illinois University, DeKalb, IL 60115-2888, USA}

\maketitle

\begin{abstract}
It is well-known that Douady and Hubbard proved that the Mandelbrot set is connected by constructing a uniformizing holomorphic map for its complement from the B\"ottcher coordinates. This strategy has a clear analogue in the setting of quasiregular Mandelbrot sets. In this paper, we show that the natural generalization of the Douady-Hubbard uniformizing map is quasiconformal in a neighbourhood of infinity, as well as, in fact, outside a certain neighbourhood of the quasiregular Mandelbrot set. On the other hand, we also show that this uniformizing map can reverse orientation at some parameters, which forces it to have folds and, in particular, prevents it from being injective. This feature cannot happen in the holomorphic setting. The connectivity of the quasiregular Mandelbrot sets therefore cannot be established along the Douady-Hubbard route, and remains open.
\end{abstract}

\section{Introduction}

As part of their foundational work in modern complex dynamics, Douady and Hubbard \cite{DH82} showed that the Mandelbrot set $\mathcal{M}$ is connected by explicitly constructing a uniformizing coordinate $\Psi : \C \setminus \mathcal{M} \to \C \setminus \overline{\D}$. More precisely, if $\psi_c$ is the B\"ottcher coordinate which straightens $g_c(z) := z^2+c$, for $c\in \C$, to $g_0(z) = z^2$ in a neighbourhood of infinity, then for $c\notin \mathcal{M}$, the function $\Psi(c) = \psi_c(c)$ is, first, well-defined and, second, provides the uniformizing holomorphic coordinate.

A quasiregular generalization of the Mandelbrot set was introduced by the author and Goodman in \cite{FG10}. Here, and throughout, we will fix $K> 1$ and $\theta \in (-\pi/2 , \pi/2 ]$. We denote by $h := h_{K,\theta}$ the affine stretch by factor $K$ in the direction $e^{i\theta}$. For $c\in \C$, we denote by $f_{c}$ the map $f_{c}:= g_c \circ h_{K,\theta}$. Then $f_{c}$ is a degree two quasiregular map with constant complex dilatation $\mu = e^{2i\theta} \frac{K-1}{K+1}$ and thus provides the simplest family of quasiregular maps in the plane in which to study their dynamics. The case $K=1$ reduces to the setting of quadratic polynomials.

Writing $I(f_c)$ for the escaping set of $f_c$ and $BO(f_c) = \C \setminus I(f_c)$ for the bounded orbit set, the generalized Mandelbrot set $\mathcal{M}_{K,\theta}$ is defined via
\begin{equation}
\label{def:mandelbrot}
\mathcal{M}_{K,\theta} = \{ c\in \C : 0 \in BO(f_c) \} = \{ c\in \C : f_c^n(0) \not \to \infty \text{ as } n \to \infty \}.
\end{equation}
When $K=1$, this reduces to the usual Mandelbrot set $\mathcal{M}$. Following the well-trodden path in complex dynamics, for $n\geq 0$ we set
\begin{equation}
\label{eq:Pn}
P_n(c) = f_{c}^n(c) = f_c^{n+1}(0).
\end{equation}
When $K=1$, the $P_n$ are polynomials in $c$. Understanding the roots and critical points of these polynomials is crucial for understanding the dynamics of the quadratic family. However, when $K>1$, the $P_n$ are no longer polynomials in $c$. Instead they are two dimensional polynomials in two variables, but understanding their behaviour is still crucial for the dynamical analysis of the family $f_{c}$.

The goal in this paper is to follow the Douady-Hubbard machinery as far as we are able and to point out what we can and cannot do. Part of our pathway was previously laid out: B\"ottcher coordinates for the family $f_c$ were shown to exist by the author and Fryer in \cite{FF12}. This provides a neighbourhood of infinity $U_c$ and an asymptotically conformal quasiconformal map $\psi_c:U_c \to \C$ such that $f_0 \circ \psi_c = \psi_c \circ f_c$ holds in $U_c$. Then \cite[Theorem 2.4(ii)]{FF12} shows that, when $c\notin \mathcal{M}_{K,\theta}$, the map $\psi_c$ may be extended injectively to a domain containing $z=c$. The extension is obtained by lifting curves under $f_c$ and $f_0$, see \cite[Lemma 6.1]{FF12}: if $n$ is the least integer with $c\in f_c^{-n}(U_c)$, then $\psi_c$ is extended to $f_c^{-n}(U_c)$, the extension is uniquely determined by $\psi_c|_{U_c}$, and it still satisfies $f_0 \circ \psi_c = \psi_c \circ f_c$ on the enlarged domain. We will use this functional equation for the extended map throughout; a description of the extended domain in terms of the Green's function of \cite{BF25} is given in the proof of Lemma \ref{lem:Psicont} below. The following definition therefore makes sense.

\begin{definition}
\label{def:Psi}
Let $K> 1$ and $\theta \in(-\pi/2,\pi/2]$. Then we define the map $\Psi :\C\setminus \mathcal{M}_{K,\theta} \to \C$ via
\[ \Psi (c) = \psi_c(c).\]
\end{definition}
The map $\Psi$ depends on $K$ and $\theta$ but we will suppress this dependency in the notation for simplicity.
We denote by $\mathcal{H}(A)$ the linear dilatation of a linear map $A$ and we denote by $D_u $ the derivative with respect to the variable $u$.
Our first main theorem is then as follows.

\begin{theorem}
\label{thm:annular}
There exists $R = R(K,\theta) < \infty$ such that $\Psi$ is a quasiconformal homeomorphism of $\{c : |c| >R \}$ onto its image with $\mathcal{H}(D_c \Psi) \leq 1 + O_{K,\theta}(|c|^{-1})$, $\det D_c \Psi > 0$, and $D_c \Psi \to I$ as $c\to \infty$.
\end{theorem}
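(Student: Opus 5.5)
I plan to work with the explicit infinite-product (limit) formula for the Böttcher coordinate and differentiate it in $c$. Since $f_0\circ\psi_c=\psi_c\circ f_c$ and $f_0=g_0\circ h$, the coordinate is given on $U_c$ (and on its extension) by
\[ \psi_c(z)=\lim_{n\to\infty} f_0^{-n}\bigl(f_c^n(z)\bigr), \]
with an appropriate branch of $f_0^{-n}$ normalized near infinity as in \cite{FF12}. For $|c|$ large, $z=c$ already lies in $U_c$: one has $|f_c(c)|\approx K'|c|^2$, where $K'\in[1,K]$ depends on the direction. So no curve-lifting extension is needed, and I can write
\[ \Psi(c)=c\cdot\prod_{n\ge 0}\frac{w_{n+1}}{w_n}\ \text{(suitably interpreted)} . \]
More concretely, I will set $w_0=c$ and $w_{n+1}=f_c(w_n)=(h(w_n))^2+c$. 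Then $\Psi(c)=\lim_n f_0^{-n}(w_n)$ and telescope: $\Psi(c)=\lim_n F_n(c)$ with $F_n=f_0^{-n}\circ P_n$ (recall $P_n(c)=f_c^n(c)$).

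The key identity is that $f_0^{-1}(w_{n+1})$ and $w_n$ differ only by the perturbation $c$: since $f_0(w_n)=h(w_n)^2$, we get $w_{n+1}=f_0(w_n)(1+c/h(w_n)^2)$. Hence
\[ F_{n+1}(c)=f_0^{-n}\Bigl(f_0^{-1}\bigl(f_0(w_n)(1+\epsilon_n)\bigr)\Bigr), \qquad \epsilon_n=\frac{c}{h(w_n)^2}. \]
We have $|\epsilon_0|\asymp|c|^{-1}$, and the $\epsilon_n$ decay doubly exponentially because $|w_n|\gtrsim|c|^{2^n}$. Since $f_0^{-1}$ is a fixed linear map $h^{-1}$ composed with a square root, each step is the previous point composed with a map of the form $u\mapsto h^{-1}\bigl(\sqrt{h(u)^2(1+\epsilon)}\bigr)$. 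I will show that this correction factor is $C^1$-close to the identity, with error $O(|\epsilon_n|)$, and that the errors propagate through $f_0^{-n}$ without amplification in the relevant normalized sense. Here I use that $f_0^{-n}$ is uniformly quasiconformal, and that its derivative scales like $|w_n|^{-(1-2^{-n})}$ times bounded distortion.

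The next step is the derivative. I will write $D_c F_{n+1}=D_cF_n+(\text{error})$ and prove uniform convergence of $D_cF_n$ on $\{|c|>R\}$. The base case is $F_0(c)=c$, so $D_cF_0=I$. For the increments I use the chain rule together with $D_c w_n$. By induction, $D_c w_n=D f_0^n(c)\,(I+O(|c|^{-1}))$: each step adds $D_c(c)=I$, which is negligible relative to $D f_c(w_n)D_cw_n\sim|w_n|$. Feeding this into $D_cF_n=Df_0^{-n}(w_n)\,D_cw_n$, the two derivatives of $f_0^{\pm n}$ nearly cancel, giving
\[ D_cF_n=I+O\Bigl(\sum_{k<n}|\epsilon_k|\Bigr)=I+O(|c|^{-1}). \]
Uniform convergence then shows that $\Psi$ is $C^1$ with $D_c\Psi=I+O(|c|^{-1})$. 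This immediately yields $\det D_c\Psi>0$, $\mathcal H(D_c\Psi)\le 1+O(|c|^{-1})$, and $D_c\Psi\to I$.

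The final step is injectivity. Since $D\Psi-I$ is small, $\Psi(c)-c$ is Lipschitz with small constant on convex pieces. The domain $\{|c|>R\}$ is not convex, so I will argue instead via the argument principle or degree. The map $\Psi$ is a local homeomorphism and is proper onto its image near infinity, and $\Psi(c)=c(1+o(1))$, so on circles $|c|=r$ the image winds once around $0$. Hence the degree is $1$ and $\Psi$ is injective. Alternatively, for two points $c_1,c_2$ I can join them by a path of length $\le \pi|c_1-c_2|/2\cdot$const outside the disc and integrate $D\Psi-I$.

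The main obstacle I expect is the middle step: controlling $Df_0^{-n}(w_n)\,D_cw_n$ uniformly in $n$. The difficulty is that $h$ is non-conformal, so the derivatives of $f_0^n$ are non-normal matrices whose directions rotate. I will need the matrix products to telescope, which I intend to handle by writing everything in the conjugated coordinate $h$ and using that $g_0$ is conformal. Only the fixed linear map $h$ enters each step, so the products become $h^{-1}\cdot(\text{conformal})\cdot h$ and stay boundedly distorted.
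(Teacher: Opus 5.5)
Your route is viable, and in one respect leaner than the paper's. Since $f_0^{-n}$ does not depend on $c$, the chain rule gives $D_cF_n=[Df_0^n(F_n(c))]^{-1}\Pi_nR_n$ directly. The paper instead differentiates $\psi_c(P_n(c))$ along the fixed orbit $f_0^j(\Psi(c))=\psi_c(P_j(c))$, and so needs the joint $C^1$ regularity and $c$-derivative bounds of Proposition \ref{prop:bottreg}(a)--(c). The price you pay is a base point $F_n(c)$ that moves with $n$.

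However, the step you single out as the crux rests on a false claim. The maps $f_0^{-n}$ are \emph{not} uniformly quasiconformal (compare \cite[Theorem 2.6]{FF12}). For $\theta=0$ the positive real axis is $f_0$-invariant, and $Df_0^n(x)$ is a positive multiple of $\operatorname{diag}(K^n,1)$ there. Nor do the products collapse to a single $h^{-1}\cdot(\text{conformal})\cdot h$. Write $[Df_0^n(F_n(c))]^{-1}\Pi_n=\lambda_nS_n$ with $S_n=\prod_j(I+\mathcal{E}_j)$, exactly as in Section \ref{sec:gap}, but using the orbit $f_0^j(F_n(c))=\psi_{n-j}(P_j(c))$ in place of $f_0^j(\Psi(c))$; here $\psi_{n-j}$ is the $(n-j)$-th approximant of Section \ref{sec:bottreg}. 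Each one-step mismatch $v_j=h^{-1}\Rot_{\delta_j}h$ does have your form. But it enters conjugated by the non-conformal partial product $Y_j$, and $\mathcal{H}(Y_j)$ can be as large as $K^j$. This loss is genuine, not an artefact of the estimate. For $\theta=0$ and $c$ close to the positive real axis, $Y_j\approx h^j$, and $h^{-(j+1)}\Rot_{\pi/2}h^{j+1}$ has norm $K^{j+1}$, so $\|\mathcal{E}_j\|$ is comparable to $K^{j+1}|\delta_j|$. So ``errors propagate through $f_0^{-n}$ without amplification'' and $D_cF_n=I+O(\sum_k|\epsilon_k|)$ are not what the computation gives.

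The missing idea is that this exponential amplification must be beaten by doubly exponential decay; this is precisely Lemma \ref{lem:Snestimate} together with \eqref{eq:sgeneral} at $m=0$. The angle mismatch at time $j$, between $h(P_j(c))$ and $h(\psi_{n-j}(P_j(c)))$, is $O(K|c|/|P_j(c)|^2)$. This follows from the bound $|\psi_k(z)-z|\leq C|c|/|z|$, uniform in $k$, which comes from \eqref{eq:Tkbound}. Since $|P_j(c)|\geq(|c|-1)^{2^j}$, once $(|c|-1)^2\geq 2K$ the sum $\sum_jK^{j+1}|c|/|P_j(c)|^2$ is $O(K^2/|c|)$. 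So your conclusion $D_cF_n=I+O(|c|^{-1})$ survives. The same factor $\mathcal{H}(\Pi_n)\leq K^n$ appears when you move the one-step error $I+O(K|\epsilon_n|)$ past $\Pi_n$ in estimating $D_cF_{n+1}-D_cF_n$, and it is absorbed the same way.

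Relatedly, your claim $D_cw_n=Df_0^n(c)(I+O(|c|^{-1}))$ is false as written. The reasoning you give (each step adds $I$) proves it with $Df_c^n(c)=\Pi_n$, the product of $Df_0$ along the $f_c$-orbit, in place of $Df_0^n(c)$. The $f_0$-orbit of $c$ does not shadow $P_n(c)$. Already for $K=1$, $P_n(c)/c^{2^n}\approx(\Psi(c)/c)^{2^n}$ with $\Psi(c)/c=1+\frac{1}{2c}+O(|c|^{-2})$. So the cancellation of $f_0^{\pm n}$ must be carried out factor by factor along the nearby orbits $P_j(c)$ and $\psi_{n-j}(P_j(c))$, and that is exactly where the conjugation by $Y_j$ enters.

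For injectivity, your path-integration option is the right one; the paper uses a path in $\{|c|\geq R\}$ of length at most $(1+\pi)|c_1-c_2|$. The winding-number sketch does not by itself control points near the inner circle.
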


This theorem shows that the Douady-Hubbard program works, at least in a neighbourhood of infinity. The map $\Psi$ satisfies the functional equation $f_0^n( \Psi(c)) = \psi_c (P_n(c))$ and this can, in principle, be used to extend the domain of definition of $\Psi$. However, there are obstructions that arise because $P_n(c)$ is no longer holomorphic, or even quasiregular, in $c$. We show in Proposition \ref{prop:derivative} below that $\Psi$ is $C^1$ on $\C \setminus \mathcal{M}_{K,\theta}$, so that $\det D_c \Psi$ is a continuous function there. 

We call $c_0 \in \C \setminus \mathcal{M}_{K,\theta}$ an \emph{orientation reversing parameter} if $\det D_c\Psi (c_0) <0$. If $U \subset \C\setminus \mathcal{M}_{K,\theta}$ is a connected open set on which $\det D_c \Psi$ takes both signs, we say that $\Psi$ has a \emph{fold} in $U$, and we call the non-empty set $\partial \{ \det D_c \Psi <0 \} \cap U$ the \emph{fold locus} in $U$. A fold has a strong consequence. A $C^1$ map which is injective on a connected open set is a homeomorphism onto its image by invariance of domain, so it has a constant local degree $\pm 1$ there, and this degree agrees with the sign of the Jacobian at every point where the Jacobian does not vanish. Hence $\Psi$ cannot be injective on any connected open set in which it has a fold.
In fact, we show that for a particular choice of parameters, $\Psi$ has orientation reversing parameters, which implies that folds must exist.

\begin{theorem}
\label{thm:fold}
Let $K=5$ and $\theta = 0$. Then $c=-1/10$ is an orientation reversing parameter, and $\Psi$ has a fold in every connected open subset $U$ of $\C \setminus \mathcal{M}_{5,0}$ which contains both $-1/10$ and a neighbourhood of infinity. In particular, $\Psi$ is not injective on $\C \setminus \mathcal{M}_{5,0}$, and so is not a quasiconformal map there.
\end{theorem}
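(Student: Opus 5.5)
We need to show $\det D_c\Psi(-1/10)<0$ for $K=5$, $\theta=0$. Here $h(x+iy)=5x+iy$ and $f_c(z)=(5x+iy)^2+c$. The first step is to check that $c=-1/10\notin\mathcal M_{5,0}$. We have $f_c(0)=c=-0.1$ and $P_1(c)=f_c(-0.1)=(-0.5)^2-0.1=0.15$. Next $f_c(0.15)=0.75^2-0.1=0.4625$, then $2.3125^2-0.1\approx 5.25$, after which the orbit escapes rapidly along the real axis. For real $c$ and real orbits, $f_c(x)=25x^2+c$, and escape is immediate once $x$ is large.

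For the derivative, I will use the functional equation $f_0^n(\Psi(c))=\psi_c(P_n(c))$ together with the standard Böttcher limit. This gives
\[\Psi(c)=\lim_{n\to\infty} f_0^{-n}\bigl(P_n(c)\bigr),\]
with the branch fixed by the asymptotics of $\psi_c$ near infinity. By Proposition \ref{prop:derivative}, $\Psi$ is $C^1$, and I expect the differentiated limit to converge as well. The key point is that $f_0^{-1}$ near infinity is $h^{-1}\circ\sqrt{\cdot}$. At points $w$ far along the positive real axis, $\sqrt{w}$ has derivative $\frac{1}{2\sqrt w}$ times the identity. Since $h^{-1}=\mathrm{diag}(1/5,1)$ is linear, the sign of $\det D_c\Psi$ equals the sign of $\det D_c P_n(c)$ for large $n$, provided $P_n(c)$ lies near the positive real axis, where the inverse branches are orientation preserving. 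Each inverse branch of $f_0$ has positive Jacobian away from $0$, because $f_0$ is orientation preserving of degree two. So in fact $\operatorname{sign}\det D_c\Psi(c)=\operatorname{sign}\det D_cP_n(c)$ for every large $n$, independent of position.

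Next I compute $D_cP_n$ at real $c$ by the chain rule, writing $P_{n+1}(c)=f_c(P_n(c))$. This gives
\[D_cP_{n+1}=Df_c(P_n)\,D_cP_n+I,\]
where $Df_c(z)=Dg(h z)\,\mathrm{diag}(5,1)$. At a real point $z=x$, $Dg(5x)$ is multiplication by $10x$, that is, $10x\cdot I$. So $Df_c(x)=10x\,\mathrm{diag}(5,1)=\mathrm{diag}(50x,10x)$, and everything stays diagonal with $D_cP_n=\mathrm{diag}(a_n,b_n)$. The recursions are
\[a_{n+1}=50x_n a_n+1,\qquad b_{n+1}=10x_n b_n+1,\]
with $x_0=c=-0.1$ and $a_0=b_0=1$. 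Then $x_0=-0.1$ gives $a_1=-5+1=-4$ and $b_1=-1+1=0$. Next $x_1=0.15$ gives $a_2=7.5\cdot(-4)+1=-29$ and $b_2=1$. After that all $x_n>0$ and grow, so $a_n$ stays negative and $b_n$ stays positive. The determinant $a_nb_n$ is therefore negative for all large $n$. Normalizing by the product of the inverse-branch derivatives, which are positive diagonal factors, the limit of $\det$ is $\lim a_nb_n/\prod(\cdots)$. I must check that this limit is strictly negative rather than zero: the normalized quantities $a_n/\prod 50x_k$ and $b_n/\prod 10x_k$ converge to nonzero limits because the $+1$ terms become negligible geometrically fast.

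This step is the main obstacle: justifying that the derivative of the limit equals the limit of the derivatives, with a nonzero limit. I would handle it with the uniform convergence of the Böttcher construction in $c$ (Lemma \ref{lem:Psicont}, Proposition \ref{prop:derivative}) plus the explicit tail estimate above.

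For the fold, $\det D_c\Psi>0$ near infinity by Theorem \ref{thm:annular}, and $\det D_c\Psi(-1/10)<0$. A connected open $U$ containing both points therefore has a continuous Jacobian of both signs, so by definition $\Psi$ has a fold in $U$. Taking $U=\C\setminus\mathcal M_{5,0}$, which is connected or at least has a component containing both, gives non-injectivity by the invariance-of-domain argument in the introduction. If connectivity of the complement is unclear, I would use the component of $\{|c|>R\}\cup$ (a real-axis path from $-1/10$ to $-\infty$ avoiding $\mathcal M$) instead. Checking that this real-axis path avoids $\mathcal M$ is easy because $f_c(x)\ge 25x^2-|c|$ escapes for $c<-0.1$.
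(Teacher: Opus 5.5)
Your route is the paper's: the diagonal recursions $a_{n+1}=50x_na_n+1$, $b_{n+1}=10x_nb_n+1$, then transferring the sign to $D_c\Psi$, then Theorem \ref{thm:annular} near infinity and a neighbourhood of the negative real axis for the fold. But the step that carries the theorem is not proved. The limit/derivative interchange you call the main obstacle is already settled by Proposition \ref{prop:derivative}: $D_c\Psi=\lambda_\infty S_\infty R_\infty$ with $\lambda_\infty>0$ and $\det S_\infty=1$, so $\sign\det D_c\Psi=\sign\det R_\infty$. In this real case $R_n=\operatorname{diag}\bigl(a_n/\prod_{k<n}50x_k,\; b_n/\prod_{k<n}10x_k\bigr)$ is exactly your normalized pair. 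Drop your claim that $\sign\det D_c\Psi=\sign\det D_cP_n$ for all large $n$ because inverse branches of $f_0$ preserve orientation. It ignores the contribution of $\psi_c$, namely $D_c[\psi_c(P_n(c))]=L_nD_cP_n+J_n$ with $L_n=D_z\psi_c$ and $J_n$ the $c$-derivative. And in any case $\det D_cP_n<0$ for all $n\ge2$ only gives $\det R_\infty\le 0$. Everything therefore hinges on $\det R_\infty\neq0$, and that is what is missing.

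Your stated reason, that the normalized limits are nonzero ``because the $+1$ terms become negligible geometrically fast'', only proves convergence. For the second entry it points the wrong way. Since $b_1=0$, the normalized value after one step is exactly $1+(10x_0)^{-1}=0$, so the sign of the limit is produced entirely by the later $+1$ contributions. A crude tail bound does not rescue it either. Using only $x_i\ge 3/20$ for $i\ge1$, the tail after $n=2$ is bounded by $\sum_{k\ge2}(2/3)^k=4/3$, which exceeds the modulus $2/3$ of the second entry at $n=2$.

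What closes the gap is the sign structure. Since $x_0<0<x_i$ for $i\ge1$, every product $\prod_{i<j}(10x_i)^{-1}$ with $j\ge1$ is negative. So the second entry decreases in $n$, and its limit is at most $-2/3$. Since $50x_i\ge 15/2$ for $i\ge1$, the first entry has limit at least $1-\frac15\bigl(1-\frac{2}{15}\bigr)^{-1}=\frac{10}{13}$. Hence $\det R_\infty<0$. Alternatively, using $x_i\ge x_2=0.4625$ for $i\ge2$ gives a tail bound of about $0.18<2/3$.

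Two smaller points on the fold. First, $U$ must be open, so take $\{|c|>R\}$ together with a neighbourhood of the compact segment $[-R,-1/10]$ inside the open set $\C\setminus\mathcal{M}_{5,0}$. Second, your justification that real $c\le-1/10$ escape is too quick, since the orbit first drops to $P_1=25c^2+c$. Either quote $\mathcal{M}_{5,0}\cap\R=[-2/25,1/100]$ as the paper does, or check that $P_2-P_1=625|c|^3(25|c|-2)>0$. Then $(P_n)_{n\ge1}$ increases with increments bounded below, so it tends to infinity.
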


Numerical investigation indicates that in fact $\Psi$ has orientation reversing parameters for every $K$ tested in the range $[1.015,5]$, with $\theta = 0$. We will discuss this further later on. Our conclusion is that any attempt to prove that $\mathcal{M}_{K,\theta}$ is connected cannot fully run through an argument involving the analogue of the uniformizing coordinate that Douady and Hubbard construct. Note, however, that this does not prove that $\mathcal{M}_{K,\theta}$ is not connected. This question remains open.

To return to positive results, we make the following definitions. The first is the natural ``orbit stays large'' condition; the second is the weaker, weighted condition that actually governs the orientation of $\Psi$.

\begin{definition}
\label{def:omegar}
For $r>1$, we set
\[ \Omega_r := \left \{ c \in \C \setminus \mathcal{M}_{K,\theta} : |P_i(c)| \geq r \text{ for all }i\geq 0\right \}.\]
\end{definition}

\begin{definition}
\label{def:sigma}
For $c\in \C$ and $n\geq 1$ we define the \emph{close-return sums}
\[ \sigma_n(c) := \sum_{j=1}^{n} \prod_{i=0}^{j-1} \frac{1}{2|h(P_i(c))|}, \qquad \sigma(c) := \lim_{n\to\infty} \sigma_n(c) \in (0,\infty],\]
with the convention that $\sigma_n(c) = \sigma(c) = \infty$ whenever $P_i(c) = 0$ for some $i\leq n-1$; such $c$ lie in $\mathcal{M}_{K,\theta}$, as $0$ is then periodic under $f_c$. We set
\[ \Omega^* := \{ c\in \C \setminus \mathcal{M}_{K,\theta} : \sigma(c) < 1 \}.\]
\end{definition}

The $j$-th term of $\sigma(c)$ is the natural bound for $||\Pi_j^{-1}||$, where $\Pi_j$ is the product of the derivatives of $f_c$ along the first $j$ points of the critical orbit, see \eqref{eq:DcPn} below. Here, we just note that the sum is finite for every $c\notin \mathcal{M}_{K,\theta}$ by Lemma \ref{lem:alg2} below. Since $|h(w)|\geq |w|$, if $c\in \Omega_r$ then $\sigma(c) \leq \sum_{j\geq 1} (2r)^{-j} = (2r-1)^{-1} <1$, so
\[ \Omega_r \subset \Omega^* \quad \text{for every } r>1.\]
The inclusion is far from an equality. If $c\in \Omega_r$ then $|c| = |P_0(c)| \geq r>1$, so $\Omega_r$ misses the whole unit disk, whereas $\Omega^*$ allows the sequence $P_n(c)$ to pass through the unit disk, even more than once, provided the products of the $(2|h(P_i(c))|)^{-1}$ stay summable below $1$. More precisely, orientation can only be reversed if the sequence $P_n(c)$ makes a quantitatively strong close return to $0$, in terms of $\sigma$, see Corollary \ref{cor:fold} below. We note also that neither $\Omega_r$ nor $\Omega^*$ is the same as $\{|c| >R \}$: the hypothesis of Theorem \ref{thm:annular} forces $|c|$ to be large from the start, whereas $\Omega_r$ and $\Omega^*$ only constrain the orbit, and it is possible for an orbit in $\Omega^*$ to hang around near the Mandelbrot set for a while before escaping.

In \cite{BF25}, Broderius and the author constructed an analogue of the Green's function for the maps $f_c$. This is a non-negative, continuous function $G_{c} :\C \to \R$ that is only non-zero on the escaping set $I(f_c)$ and satisfies $G_c(f_c(z)) = 2G_c(z)$ for all $z\in \C$. As a natural follow-up to this construction, we make the following definition.

\begin{definition}
\label{def:mandelgreen}
If $K>1$ and $\theta \in (-\pi/2,\pi/2]$ are fixed, then we define the function $G_{\mathcal{M}}$ via
\[ G_{\mathcal{M}} (c) := G_{c}(c).\]
\end{definition}

As the functions $G_c$ need not be harmonic, there is no reason to assume that $G_{\mathcal{M}}$ is harmonic. As $c\in I(f_c)$ for $c\notin \mathcal{M}_{K,\theta}$, it follows that $G_{\mathcal{M}}$ is strictly positive on $\C \setminus \mathcal{M}_{K,\theta}$.
We set
\begin{equation}
\label{eq:t0}
t_0 := \inf \left \{ t>0 : \{ G_{\mathcal{M}} >s \} \subset \Omega^* \text{ for all } s\geq t \right \},
\end{equation}
and, for comparison with the cruder region, 
\begin{equation}
\label{eq:tstar} 
t_*(r):= \inf \{ t>0 : \{ G_{\mathcal{M}} >s \} \subset \Omega_r \text{ for all } s\geq t \},
\end{equation}
for $r>1$. As $\Omega_r \subset \Omega^*$, we have $t_0 \leq \inf_{r>1} t_*(r)$, and we observe in Lemma \ref{lem:tstar} below that $t_*(r) \leq \log r + \log (K^2+1)$ for $r\geq 2$, so that $t_0 \leq \log 2 + \log(K^2+1)$ is finite. 

With these definitions in place, we may improve Theorem \ref{thm:annular} in terms of the Green's function.

\begin{theorem}
\label{thm:potential}
Let $K> 1$ and $\theta \in (-\pi/2,\pi/2]$, and let $t>t_0$. Then $\sigma_t := \sup \{ \sigma(c) : G_{\mathcal{M}}(c) >t \} <1$, and
\[ \Psi : \{ G_{\mathcal{M}} > t \} \to \{ G_{0} > t\}\]
is a quasiconformal homeomorphism with
\[ \det D_c\Psi > 0,\quad \mathcal{H}(D_c\Psi) \leq \frac{ 1+\sigma_t}{1-\sigma_t}\, e^{2s(t)},\quad s(t) \leq C(K,\theta) \left ( 1 + \log^+ \tfrac{1}{t} \right ) \max \left \{ 1, t^{-\log_2 K} \right \}.\]
In particular, if $t>t_*(r)$ for some $r>1$, then $\sigma_t \leq (2r-1)^{-1}$ and $\mathcal{H}(D_c\Psi) \leq \frac{r}{r-1}e^{2s(t)}$.
\end{theorem}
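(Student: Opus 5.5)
The plan has three parts: control $D_c\Psi$ via the functional equation, show injectivity and surjectivity by a degree/properness argument, and then read off the dilatation bound.

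\textbf{Step 1: formula for $D_c\Psi$ and positivity of the Jacobian.} Start from the functional equation $f_0^n(\Psi(c)) = \psi_c(P_n(c))$. Solve it for $\Psi$ using the branch of $f_0^{-n}$ determined near infinity, which gives
\[
\Psi(c) = f_0^{-n}\bigl(\psi_c(P_n(c))\bigr).
\]
Differentiating in $c$ and using the chain rule yields two contributions:
\[
D_c P_n = \Pi_n\Bigl( I + \sum_{j=1}^{n} \Pi_j^{-1} \Bigr) \qquad \text{(up to ordering)},
\]
where $\Pi_j$ is the product of $Df_c$ along the critical orbit, and a second term from the dependence of $\psi_c$ on the parameter $c$.

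For the first contribution, each factor satisfies $Df_c(w) = 2h(w)\,Dh$, and $\|Dh^{-1}\|\le 1$. Hence $\|\Pi_j^{-1}\|$ is bounded by the $j$-th term of $\sigma$, and so $\|\sum_j \Pi_j^{-1}\|\le \sigma(c)$. If $\sigma(c)<1$, then $I+\sum_j\Pi_j^{-1}$ is invertible, has positive determinant, and has linear dilatation at most $(1+\sigma)/(1-\sigma)$.

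For the second contribution, the $c$-dependence of $\psi_c$ at the point $P_n(c)$, and its derivative in $z$, are asymptotically conformal near infinity: by the Böttcher construction of \cite{FF12}, $D_z\psi_c\to$ conformal as $z\to\infty$. Pulling back by $f_0^{-n}$ is conformal away from $0$. Taking $n\to\infty$, the dilatation of $D_c\Psi$ is therefore controlled by the product of the factor $(1+\sigma)/(1-\sigma)$ with the accumulated distortion of $\psi_c$ along the orbit, which I bound by $e^{2s(t)}$.

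\textbf{Step 2: uniform bound $\sigma_t<1$.} By the definition of $t_0$, the set $\{G_{\mathcal M}>t\}$ lies in $\Omega^*$, so $\sigma<1$ pointwise. For the uniform statement, pick $t'\in(t_0,t)$. Then the closure of $\{G_{\mathcal M}>t\}$ lies inside $\{G_{\mathcal M}>t'\}\subset\Omega^*$ together with a neighbourhood of infinity, where $\sigma\to0$. Since $\sigma$ is continuous (uniform convergence of $\sigma_n$, from Lemma \ref{lem:alg2} and the growth $G_c(P_n)=2^{n+1}G_{\mathcal M}$), and the closed set is compact after adding $\infty$, $\sigma$ attains a supremum below $1$.

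The distortion estimate for $s(t)$ comes from the $C(K,\theta)$ estimates relating $|z|$ and $G_c(z)$ in \cite{BF25}. Along the orbit, $G_c(P_n)=2^{n+1}G_{\mathcal M}(c)$. The Böttcher map's dilatation at $z$ is of order $|z|^{-1}$ up to logarithmic corrections, and for small $t$ one has $|z|\gtrsim t^{\log_2 K}$ scaling. Summing the resulting geometric series gives the form $(1+\log^+ 1/t)\max\{1,t^{-\log_2K}\}$. This step is the main obstacle: it needs a careful summation of the parameter-derivative of $\psi_c$ along the orbit. I would first try to reuse the proof of Theorem \ref{thm:annular}, with $|c|$ replaced by $|P_n(c)|$ and the constant $R$ replaced by the first time the orbit exceeds $R$, which is about $\log_2(1/t)$ steps.

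\textbf{Step 3: homeomorphism onto $\{G_0>t\}$.} Because $\psi_c$ conjugates Green's functions, $G_0(\Psi(c)) = G_c(c) = G_{\mathcal M}(c)$. Hence $\Psi$ maps $\{G_{\mathcal M}>t\}$ into $\{G_0>t\}$, and it is proper there: the level sets correspond, and near infinity $\Psi(c)\sim c$ by Theorem \ref{thm:annular}. A proper local homeomorphism with positive Jacobian is a covering map. The target $\{|w|>e^{t}\}$ is an annulus around $\infty$, and the degree at $\infty$ is $1$ by Theorem \ref{thm:annular}. The domain is connected, being the union of sets flowing to infinity along decreasing $G_{\mathcal M}$. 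A degree-one covering is a homeomorphism, and combined with the bounded dilatation from Step 1 this makes $\Psi$ quasiconformal.

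\textbf{Final clause.} If $t>t_*(r)$, then the domain lies in $\Omega_r$, so $\sigma\le(2r-1)^{-1}$. Substituting this bound into $(1+\sigma)/(1-\sigma)$ gives $r/(r-1)$, as claimed.
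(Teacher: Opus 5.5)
Your Steps 2 and 3 and the final clause essentially follow the paper's argument:
\begin{itemize}
\item continuity of $\sigma$ plus compactness gives $\sigma_t<1$;
\item $G_0\circ\Psi=G_{\mathcal{M}}$, properness and the covering argument give surjectivity;
\item Theorem~\ref{thm:annular} gives degree one.
\end{itemize}
However, Step~1 and the estimate for $s(t)$ have a genuine gap.

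\textbf{The false conformality claim.} The assertion that pulling back by $f_0^{-n}$ is conformal is false. Since $f_0(z)=h(z)^2$, the derivative $Df_0^n(\Psi(c))=\prod_j(B_jh)$ can have linear dilatation as large as $K^n$. The same is true of $\Pi_n$ in $D_cP_n=\Pi_nR_n$: for real $c$ and $\theta=0$ it is diagonal with $\mathcal{H}(\Pi_n)=K^n$. You bound $R_n$ correctly, but you say nothing about $[Df_0^n(\Psi(c))]^{-1}\Pi_n$, and that product is where all of the distortion lives. As written, Step~1 gives no finite bound on $\mathcal{H}(D_c\Psi)$ at all. Conversely, the terms you blame, $D_z\psi_c-I$ and $D_c\psi_c$ at $P_n(c)$, disappear in the limit. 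After multiplication by $[Df_0^n(\Psi(c))]^{-1}$ they tend to $0$, because $|P_n(c)|$ grows double exponentially (see the proof of Proposition~\ref{prop:derivative}). So $e^{2s(t)}$ is not an ``accumulated distortion of $\psi_c$''.

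\textbf{The missing idea} is the cancellation between the two non-conformal products. Write $\Pi_n=(\prod_j|\alpha_j|)Y_n$ and $Df_0^n(\Psi(c))=(\prod_j|\beta_j|)\tilde Y_n$, where $Y_n=\prod_j\Rot_{\phi_j}h$ and $\tilde Y_n=\prod_j\Rot_{\chi_j}h$. Here $\phi_j=\arg h(P_j(c))$, $\chi_j=\arg h(w_j)$ with $w_j=f_0^j(\Psi(c))$, and $\epsilon_j=|\phi_j-\chi_j|$ (principal value). Then $[Df_0^n(\Psi(c))]^{-1}\Pi_n=\lambda_nS_n$ with $\lambda_n>0$, where
\begin{itemize}
\item $S_n=\tilde Y_n^{-1}Y_n=\prod_j(I+\mathcal{E}_j)$ with $\mathcal{E}_j=Y_j^{-1}(h^{-1}\Rot_{\delta_j}h-I)Y_j$;
\item $\|\mathcal{E}_j\|\le K^{j+1}\epsilon_j$, the factor $K^j$ being the cost $\mathcal{H}(Y_j)$ of the conjugation;
\item $\det(I+\mathcal{E}_j)=1$, which yields $\mathcal{H}(S_n)\le e^{2s}$ with no smallness assumption on $s$. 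This matters, because individual $\|\mathcal{E}_j\|$ may exceed $1$.
\end{itemize}

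\textbf{Where the $t$-dependence comes from.} It comes from the escape time, not from any scaling of $|z|$; your heuristic $|z|\gtrsim t^{\log_2K}$ has no counterpart.
\begin{itemize}
\item Before $m(c)$ the angle mismatch is only bounded by $\epsilon_j\le\pi$, which contributes $\pi mK^m$.
\item After $m(c)$, Lemma~\ref{lem:epsilon} gives a summable tail.
\item $G_c(P_i(c))=2^iG_{\mathcal{M}}(c)$ together with Corollary~\ref{cor:bott} gives $m(c)\le M(t)\le 2+\log_2^+(A/t)$, where $A=\log\kappa_0+\log(K^2+1)$.
\end{itemize}
Hence $K^{M(t)}\lesssim\max\{1,t^{-\log_2K}\}$, and the term $\pi M(t)$ produces the factor $1+\log^+\frac1t$.

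\textbf{Smaller points.}
\begin{itemize}
\item $G_c(P_n(c))=2^nG_{\mathcal{M}}(c)$, not $2^{n+1}G_{\mathcal{M}}(c)$.
\item $\{G_0>t\}$ is the exterior of the Jordan curve $e^t\partial I(f_0)$, not $\{|w|>e^t\}$.
\item Connectedness of $\{G_{\mathcal{M}}>t\}$ needs a real argument. For example, $\Psi$ restricted to each component is a covering onto $\{G_0>t\}$, so each component is unbounded. Each therefore meets a connected neighbourhood $\{|c|>\varrho'\}$ of infinity contained in $\{G_{\mathcal{M}}>t\}$, and so there is only one component.
\end{itemize}
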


\begin{corollary}
\label{cor:potential}
For every $t>t_0$, the set $\{ G_{\mathcal{M}} \leq t \}$ is a closed topological disk containing $\mathcal{M}_{K,\theta}$, and $\mathcal{M}_{K,\theta}$ carries external parameter rays and equipotentials at all potentials greater than $t_0$.
\end{corollary}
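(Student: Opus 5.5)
We deduce Corollary \ref{cor:potential} directly from Theorem \ref{thm:potential}, together with the corresponding facts for the model map $f_0$. For $f_0 = g_0\circ h$ we have $\{G_0 > t\} = \{ z : |\psi_0(z)| > e^{t}\}$. In fact, since $f_0 = g_0\circ h$ and $h$ is linear, we expect $G_0(z)$ to be explicitly comparable to $\log$ of a norm adapted to $h$. In any case, $\{G_0>t\}$ is the complement of a closed topological disk (a compact, star-shaped set about $0$), foliated by the level curves $\{G_0=s\}$, $s>t$, and by the radial curves of $\psi_0$. The first step is therefore to record, from \cite{BF25,FF12}, that $\{G_0 \le t\}$ is a closed topological disk and that $\{G_0>t\}$ is an annulus whose closure in the Riemann sphere is $\{G_0 > t\}\cup\{\infty\}$, a closed disk.

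Next, fix $t>t_0$. By Theorem \ref{thm:potential}, $\Psi$ is a homeomorphism of $\{G_{\mathcal M}>t\}$ onto $\{G_0>t\}$. We need $\Psi$ to respect the potential, namely $G_0(\Psi(c)) = G_{\mathcal M}(c)$, which follows from $G_c = G_0\circ\psi_c$ on the domain of $\psi_c$. The proof of this identity: both sides satisfy $G(f(z)) = 2G(z)$, and they agree asymptotically at infinity since $\psi_c$ is asymptotically conformal with $\psi_c(z)\sim z$. We also need $\Psi(c)\to\infty$ as $c\to\infty$ (Theorem \ref{thm:annular}). Then $\Psi$ extends to a homeomorphism of $\{G_{\mathcal M}>t\}\cup\{\infty\}$ onto $\{G_0>t\}\cup\{\infty\}$, so the former is a topological open disk in the sphere. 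Its complement $\{G_{\mathcal M}\le t\}$ is closed. It is bounded because $G_{\mathcal M}(c)\to\infty$ as $c\to\infty$, which follows from $G_c(c) \approx \log|c|$ for large $c$. Hence it is compact and contains $\mathcal M_{K,\theta}$, since $G_{\mathcal M}=0$ there and is positive off it.

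The main obstacle is showing that $\{G_{\mathcal M}\le t\}$ is a closed topological disk rather than merely a compact set with connected complement: a full continuum need not be a closed disk. To handle this, I would exploit $t>t_0$ by choosing $t_0<t'<t$. Theorem \ref{thm:potential} applied at $t'$ gives a homeomorphism on the larger region $\{G_{\mathcal M}>t'\}$, under which $\{G_{\mathcal M}\le t\}\setminus\{G_{\mathcal M}\le t'\}$ corresponds to $\{t'<G_0\le t\}$, a closed annulus. Consequently $\{G_{\mathcal M}=t\}=\Psi^{-1}(\{G_0=t\})$ is a Jordan curve, and $\{G_{\mathcal M}\le t\}$ is its closed interior, since the exterior is $\{G_{\mathcal M}>t\}$. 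By the Jordan–Schoenflies theorem this is a closed topological disk. Connectedness and boundedness of the interior come from the preceding step.

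Finally, we define equipotentials at potential $s>t_0$ as the Jordan curves $\Psi^{-1}(\{G_0=s\})$, and external parameter rays as the preimages $\Psi^{-1}(\psi_0^{-1}(\{re^{i\alpha}: r>e^{s}\}))$ of the dynamical rays of $f_0$ restricted to $\{G_0>s\}$. These are well defined for every $s>t_0$ by the above. They are compatible as $s$ decreases because the maps $\Psi$ agree on overlaps: it is a single map on $\mathbb C\setminus\mathcal M_{K,\theta}$. Hence rays extend down to every potential greater than $t_0$. Distinct rays are disjoint and each meets every equipotential exactly once, since $\Psi$ is injective.
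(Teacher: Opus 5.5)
Your argument is essentially the paper's: you choose $t'\in(t_0,t)$, use Theorem \ref{thm:potential} at $t'$ together with $G_0\circ\Psi=G_{\mathcal{M}}$ to see that $\{G_{\mathcal{M}}=t\}=\Psi^{-1}(e^t\partial I(f_0))$ is a Jordan curve whose exterior is the connected open set $\{G_{\mathcal{M}}>t\}$, apply Schoenflies, and define rays and equipotentials as $\Psi$-preimages. One correction: in this paper $\psi_0$ is the identity, so $\{G_0>t\}\neq\{|\psi_0|>e^t\}$ in general, and the model facts you need are exactly those of Lemma \ref{lem:jordan}, with the rays in $\{G_0>s\}$ being the radial arcs $\{re^{i\phi}: r>e^{s}|b_{K,\theta}(\phi)|\}$ (equivalently, use the map $z\mapsto z/b_{K,\theta}(\arg z)$ of \cite{BF25} in place of your $\psi_0$).
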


We emphasize that the threshold is necessary. In Lemma \ref{lem:tzero} we show that $t_0 \geq G_{\mathcal{M}}(1/(2K)) >0$ for $\theta = 0$ and every $K\geq 1$, with $t_0 \geq 1/26$ for $1\leq K \leq 2$, so that $\{G_{\mathcal{M}} \leq t_0\}$ is a genuine collar around $\mathcal{M}_{K,\theta}$ on which Theorem \ref{thm:potential} does not apply; and Theorem \ref{thm:fold} shows that for $K=5$, $\theta=0$ the collar contains parameters at which the conclusion of Theorem \ref{thm:potential} actually fails. In particular, $t_0$ does not tend to $0$ as $K\to 1$ when $\theta =0$, so that Theorem \ref{thm:potential} does not recover the Douady-Hubbard theorem in the limit: the condition $\sigma <1$ is sufficient for orientation preservation, but it is far from necessary near $\mathcal{M}_{K,\theta}$. 

Finally, we point out that Bielefeld et al \cite{BSTV93} studied a similar family of maps $z\mapsto |z|^{2\alpha -2}z^2 +c$ for $c\in \C$ and $\alpha >\tfrac12$ real. These are compositions of $g_c(z) = z^2+c$ and the quasiconformal maps $h_{\alpha}(z) = z|z|^{\alpha-1}$. A quick computation shows that $\mu_h = \frac{\alpha-1}{\alpha+1} \cdot e^{2i\arg z}$ and so this family is different from the one discussed here. An analogous generalization of the Mandelbrot set is defined in \cite{BSTV93}, and the authors there note the difficulty of studying its properties. It is plausible the methods here could be generalized to that setting, and perhaps other families of planar quasiregular maps.

The paper is organized as follows. Section 2 collects preliminaries on quasiregular mappings, on the family $f_c$, and on the Green's function of \cite{BF25}, including the new formula $G_c(z) = \lim 2^{-n}\log^+|f_c^n(z)|$ and the structure of the equipotentials of the model map $f_0$. Section \ref{sec:bottreg} establishes the regularity of the B\"ottcher coordinate $\psi_c$ jointly in $(c,z)$ that we need, refining the construction in \cite{FF12}. Section \ref{sec:estimates} contains the estimates at the heart of the paper: orbit control and the continuity of the Green's function, the decomposition $D_cP_n = \Pi_nR_n$ and the close-return bound on $R_n$ (Lemma \ref{lem:Rnclose}), and the comparison between the parameter side and the dynamical side which produces the factor $S_\infty$. These are assembled in Proposition \ref{prop:derivative}, which shows that $\Psi$ is $C^1$ on $\C \setminus \mathcal{M}_{K,\theta}$ and gives the formula $D_c\Psi = \lambda_\infty S_\infty R_\infty$ on which everything else rests. Theorem \ref{thm:annular} is proved in Section 5. Section \ref{sec:fold} contains the Frozen Sign Lemma and the proof of Theorem \ref{thm:fold}. Theorem \ref{thm:potential} and Corollary \ref{cor:potential} are proved in Section \ref{sec:green}.

{\bf Acknowledgements:} a preliminary version of a result close to Theorem \ref{thm:annular} was first developed in 2013. It subsequently stayed in development with the hope that the uniformizing coordinate could be pushed all the way to $\partial \mathcal{M}_{K,\theta}$. In 2026, numerical investigations with Claude Opus 5 indicated for the first time that orientation reversing parameters did exist. This, and the recent work with Broderius \cite{BF25}, led to the development of Theorem \ref{thm:potential} addressing the question of just how the uniformizing coordinate could be pushed. The fractal images were produced with {\it Ultra Fractal 6}.

\section{Preliminaries}

\subsection{Quasiregular mappings}

A quasiconformal mapping $f:\C \to\C$ is a homeomorphism such that $f$ is in the Sobolev space $W^1_{2,loc}(\C)$ and there exists $k\in [0,1)$ such that the complex dilatation $\mu_f = f_{\zbar}/f_z$ satisfies
\[ |\mu_f(z)| \leq k \]
almost everywhere in $\C$. The dilatation of $f$ at $z\in \C$ is
\[ K_f(z) := \frac{1+|\mu_f(z)|}{1-|\mu_f(z)|} \geq 1.\]
A mapping is called $K$-quasiconformal if $K_f(z) \leq K$ almost everywhere. The smallest such constant is called the maximal dilatation and denoted by $K(f)$. If we drop the assumption on injectivity, then $f$ is called a quasiregular mapping.
We refer to the books of Iwaniec and Martin \cite{IM01} and Rickman \cite{Rickman} for much more on the development of the theory of quasiregular mappings.

If $A$ is a linear map, then its operator norm is $||A|| = \max_{|z|=1} |A(z)|$, and $\ell(A)$ is defined by $\ell(A) = \min_{|z|=1} |A(z)|$. The linear dilatation of $A$ is then $\mathcal{H}(A) = ||A|| / \ell(A)$. We record for later use the elementary facts that $\mathcal{H}$ is submultiplicative, that $\mathcal{H}(A) = \mathcal{H}(A^{-1})$, that $\mathcal{H}(\lambda A) = \mathcal{H}(A)$ for $\lambda >0$, and that $\ell$ is supermultiplicative. A quasiregular map is differentiable almost everywhere, and the quasiregularity definition implies that $\mathcal{H}(f')$ is uniformly bounded everywhere $f'$ exists. For planar quasiregular mappings, the linear dilatation agrees with the maximal dilatation.

In the plane, we have the following two crucial results. First, there is a surprising correspondence between quasiconformal mappings and measurable functions, see for example \cite[p.8]{IM01}.

\begin{theorem}[Measurable Riemann Mapping Theorem]
Suppose that $\mu \in L^{\infty}(\C)$ with $||\mu||_{\infty} \leq k <1$. Then there exists a quasiconformal map $f:\C \to \C$ with complex dilatation equal to $\mu$ almost everywhere. Moreover, $f$ is unique if it fixes $0,1$ and $\infty$.
\end{theorem}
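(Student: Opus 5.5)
The statement immediately above is the Measurable Riemann Mapping Theorem. It is quoted from \cite[p.8]{IM01} and is not proved in the paper. I will sketch the standard route.

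\textbf{Existence for compactly supported $\mu$.} Suppose first that $\mu$ has compact support. I would look for $f$ in the form $f(z) = z + \mathcal{C}\omega(z)$, where $\mathcal{C}$ is the Cauchy transform
\[ \mathcal{C}\omega(z) = -\frac{1}{\pi}\int_{\C} \frac{\omega(\zeta)}{\zeta - z}\, dA(\zeta). \]
With this ansatz, $f_{\zbar} = \omega$ and $f_z = 1 + \mathcal{S}\omega$, where $\mathcal{S}$ is the Beurling transform. The Beltrami equation $f_{\zbar} = \mu f_z$ then becomes
\[ (I - \mu\mathcal{S})\omega = \mu. \]
The operator $\mathcal{S}$ is an isometry on $L^2$. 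Its $L^p$ norm $C_p$ is continuous in $p$, so $C_p \to 1$ as $p \to 2$, and we can choose $p>2$ with $kC_p<1$. A Neumann series then solves the equation with $\omega \in L^p$. Since $p>2$, the function $\mathcal{C}\omega$ is H\"older continuous, and therefore $f \in W^{1,2}_{loc}$ is continuous and satisfies the Beltrami equation.

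\textbf{Existence for general $\mu$.} For a general $\mu$, I would split $\mu = \mu\chi_{\D} + \mu\chi_{\C\setminus\D}$. The part outside the disk is handled by conjugating with $z\mapsto 1/z$, which turns it into a compactly supported coefficient. One then composes the solution for one piece with the solution for the pulled-back remaining coefficient.

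\textbf{Homeomorphism property.} This is the main obstacle. For smooth compactly supported $\mu$, the solution $f$ is smooth with Jacobian $|f_z|^2(1-|\mu|^2)$. One shows $f_z \neq 0$ by writing $f_z = e^{\sigma}$, where $\sigma$ solves an auxiliary linear equation $\sigma_{\zbar} = \mu\sigma_z + \mu_z$. Hence $f$ is a local homeomorphism, and since $f(z) = z + O(1/z)$ it is proper. A proper local homeomorphism of $\C$ is a covering map, hence a homeomorphism of $\C$.

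For general $\mu$, I would approximate by smooth $\mu_j \to \mu$ almost everywhere, keeping $\|\mu_j\|_\infty \le k$. The corresponding normalized solutions form an equicontinuous family of $K$-quasiconformal maps. A subsequence converges locally uniformly to a limit, which is a homeomorphism because the inverses are also equicontinuous. The derivatives converge in $L^p$ by the same Neumann-series estimate, so the limit has dilatation $\mu$.

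\textbf{Uniqueness.} If $f$ and $g$ both solve the equation with the same $\mu$, then $g\circ f^{-1}$ is $1$-quasiconformal. By Weyl's lemma it is conformal, so it is a M\"obius map of the sphere. Fixing $0$, $1$ and $\infty$ forces it to be the identity.
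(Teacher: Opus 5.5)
The paper does not prove this theorem. It quotes it from Iwaniec--Martin, and the only place it is invoked (Section 2.2) needs just the uniqueness half: a quasiconformal map with constant dilatation $e^{2i\theta}\frac{K-1}{K+1}$ is an affine post-composition of $h_{K,\theta}$. For that use your last paragraph is exactly the relevant argument, and it is even easier there. Since $h_{K,\theta}^{-1}$ is linear, $g\circ h_{K,\theta}^{-1}$ lies in $W^{1,2}_{loc}$ with the elementary chain rule and zero dilatation. Weyl's lemma then makes it holomorphic, and injectivity makes it affine. Your outline of the full theorem is the standard Ahlfors--Bers (Bojarski) proof via the Beurling transform, and it is correct in outline. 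Compared with the citation, it makes visible where the depth lies: the bound $kC_p<1$ and the injectivity argument. None of that is actually needed in this paper.

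Two steps need repair as written.

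\emph{Injectivity.} You cannot write $f_z=e^{\sigma}$ before knowing $f_z\neq 0$; as stated this is circular. The argument runs the other way. Solve $(I-\mu\mathcal{S})q=\mu_z$ in $L^p$ and put $\sigma=\mathcal{C}q$. The equation $\sigma_{\zbar}=\mu\sigma_z+\mu_z$ makes the form $e^{\sigma}dz+\mu e^{\sigma}d\zbar$ closed. Integrating it gives a $C^1$ map $F$ with $F_z=e^{\sigma}\neq 0$ and $F_{\zbar}=\mu F_z$. Since $\sigma=O(1/|z|)$, we have $F_z-1\in L^p$, so Liouville gives $F_z=1+\mathcal{S}F_{\zbar}$. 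Hence $F_{\zbar}$ solves $(I-\mu\mathcal{S})\omega=\mu$. Uniqueness of the $L^p$ solution then gives $F=f+\mathrm{const}$, so $f_z=e^{\sigma}\neq 0$.

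\emph{Reduction to compact support.} This step depends on the homeomorphism step, so it must come after it. Let $f_1$ be the solution for $\mu_1=\mu\chi_{\D}$. The second coefficient is the push-forward $\mu_2=\bigl(\frac{\mu-\mu_1}{1-\overline{\mu_1}\mu}\cdot\frac{\partial_z f_1}{\overline{\partial_z f_1}}\bigr)\circ f_1^{-1}$, not a pull-back. Defining it requires $f_1$ to be a homeomorphism. It vanishes on the neighbourhood $f_1(\D)$ of $f_1(0)$, so after moving $f_1(0)$ to $0$ and inverting it has compact support. The solution of the inverted problem must be normalized to fix $0$ before you invert back. This step, and your uniqueness argument for general $f,g$, both use the composition rule for Beltrami coefficients of quasiconformal maps. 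That rule rests on a.e.\ differentiability and Lusin's condition (N), and should at least be cited.
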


Every planar quasiregular mapping has an important factorization, see for example \cite[p.254]{IM01}.

\begin{theorem}[Stoilow Factorization Theorem]
\label{thm:stoilow}
Let $f:\C\to \C$ be a quasiregular mapping. Then there exists a holomorphic function $g$ and a quasiconformal mapping $h$ such that $f=g\circ h$.
\end{theorem}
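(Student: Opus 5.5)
The statement just before this point is the Stoilow Factorization Theorem, which is classical. I will derive it from the Measurable Riemann Mapping Theorem stated immediately above, together with Weyl's lemma.

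\textbf{Step 1: straighten the dilatation.} Let $f:\C\to\C$ be quasiregular with complex dilatation $\mu_f = f_{\zbar}/f_z$. Then $\|\mu_f\|_\infty\le k<1$, with $\mu_f$ defined almost everywhere; set $\mu_f=0$ where $f_z=0$. By the Measurable Riemann Mapping Theorem there is a quasiconformal $h:\C\to\C$ with $\mu_h=\mu_f$ almost everywhere, normalized to fix $0$, $1$ and $\infty$.

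\textbf{Step 2: define $g$ and show it is weakly holomorphic.} Put $g:=f\circ h^{-1}$. This is continuous, and $g\in W^{1}_{2,loc}$ because $h^{-1}$ is quasiconformal and composition with a quasiconformal map preserves $W^{1}_{2,loc}$ for quasiregular-type maps. The chain rule for Sobolev maps composed with quasiconformal maps holds almost everywhere, since $h^{-1}$ has the Lusin $N$ and $N^{-1}$ properties. Using it, together with the composition formula
\[
\mu_{f} = \frac{\mu_h + (\mu_g\circ h)\,\overline{h_z}/h_z}{1+\overline{\mu_h}(\mu_g\circ h)\,\overline{h_z}/h_z},
\]
the equality $\mu_f=\mu_h$ forces $\mu_g\circ h=0$ almost everywhere. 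Because $h$ preserves null sets, this gives $g_{\zbar}=0$ almost everywhere.

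\textbf{Step 3: regularity and conclusion.} A $W^{1}_{1,loc}$ function with $g_{\zbar}=0$ in the distributional sense is holomorphic, by Weyl's lemma applied to the Cauchy--Riemann operator. Hence $g$ is entire and $f=g\circ h$.

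\textbf{Main obstacle.} The delicate part is Step 2: justifying the Sobolev regularity of $g$ and the almost-everywhere chain rule. I would handle it by using the $W^{1}_{2,loc}$ regularity of $h^{-1}$ and the Lusin properties of quasiconformal maps, as in \cite{IM01}.
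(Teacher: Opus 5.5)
The paper does not prove this statement. It quotes Stoilow factorization as a classical result with a pointer to \cite[p.~254]{IM01}, so there is no in-paper argument to compare against. Your proof is correct and is the standard analytic one: solve the Beltrami equation with $\mu_f$, show that $g=f\circ h^{-1}$ is a $W^1_{2,loc}$ solution of $g_{\overline{w}}=0$, and apply Weyl's lemma. It also matches the paper's own definition of quasiregularity as a $W^1_{2,loc}$ map with $|\mu_f|\le k<1$. An alternative classical route is topological. Quasiregular maps are open and discrete (Reshetnyak), so the topological Stoilow theorem factors $f$ as a holomorphic map composed with a homeomorphism, and one then checks that this homeomorphism is quasiconformal. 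Your route avoids Reshetnyak's theorem by using the Measurable Riemann Mapping Theorem, which the paper states just beforehand.

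Two points in Step 2 should be tightened.

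\textbf{The composition formula.} The formula for $\mu_{g\circ h}$ implicitly divides by $g_w\circ h$, so on the set where $g_w\circ h=0$ it says nothing about $g_{\overline{w}}$. It is cleaner to use the a.e.\ chain rule linearly. Since $h_{\zbar}=\mu_h h_z$,
\[ f_{\zbar}-\mu_h f_z=(g_{\overline{w}}\circ h)\,\overline{h_z}\,(1-|\mu_h|^2) \quad \text{a.e.} \]
The left side vanishes a.e., because $f_{\zbar}=\mu_f f_z=\mu_h f_z$ a.e. This also holds where $f_z=0$, since there $f_{\zbar}=0$ by $|f_{\zbar}|\le k|f_z|$. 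As $J_h=|h_z|^2(1-|\mu_h|^2)>0$ a.e.\ for quasiconformal $h$, you get $g_{\overline{w}}\circ h=0$ a.e. Then $g_{\overline{w}}=0$ a.e.\ by the Lusin $N$ property of $h$, as you say.

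\textbf{The Sobolev step.} The qualifier ``for quasiregular-type maps'' is unnecessary. For every $u\in W^1_{2,loc}$ one has $u\circ h^{-1}\in W^1_{2,loc}$, with the Dirichlet integral distorted by at most a factor $K(h)$. This is proved by approximating $u$ by smooth functions and changing variables. This fact is the essential input that makes Weyl's lemma applicable. Without it the argument would fail, since a.e.\ vanishing of pointwise derivatives alone does not force holomorphy.
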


In fact, in the plane, some sources use this decomposition as the definition of a quasiregular mapping. However, this approach does not generalize to higher dimensions, so even though this paper is two dimensional, we keep the Stoilow Factorization as a theorem.

\subsection{The mappings $f_c$}

As always throughout the paper, we fix $K>1$ and $\theta \in (-\pi/2 , \pi/2]$. Then we define $h:=h_{K,\theta}$ to be the stretch by factor $K$ in the direction of $e^{i\theta}$. It is an elementary computation (see for example \cite{BF25}) to check that
\begin{align*}
h_{K,\theta}(z) &= \left ( \frac{K+1}{2} \right ) z + e^{2i\theta} \left (\frac{K-1}{2} \right ) \zbar \\
&= \left [ x(K\cos^2(\theta)+\sin^2(\theta))+y(K-1)\sin(\theta)\cos(\theta) \right ] \\
& \hskip0.5in + i\left [ x(K-1)\cos(\theta)\sin(\theta)+y(K\sin^2(\theta)+\cos^2(\theta))\right ].
\end{align*}
It is clear that
\[ \mu_{h_{K,\theta}} \equiv e^{2i\theta} \left ( \frac{K-1}{K+1} \right ),\]
and it follows from the Measurable Riemann Mapping Theorem and \cite[Proposition 3.1]{FG10} that any quasiconformal mapping of $\C$ with constant complex dilatation given as above arises by post-composing $h_{K,\theta}$ with a complex affine map, since the composition of any such map with $h_{K,\theta}^{-1}$ is a conformal automorphism of $\C$. The singular values of $h_{K,\theta}$, viewed as a real-linear map, are $K$ and $1$, so that
\begin{equation}
\label{eq:hsv}
||h|| = K, \quad \ell(h) = 1, \quad \mathcal{H}(h) = K, \quad \det h = K, \quad |h(z)| \geq |z| \text{ for all } z \in \C.
\end{equation}

If we view quasiconformal mappings with constant complex dilatation as the simplest in their class, then we can view quadratic polynomials as the simplest non-injective holomorphic functions. In light of the Stoilow Factorization Theorem, the simplest quasiregular mappings arise as compositions of quadratic polynomials and mappings of the form $h_{K,\theta}$. As defined in the introduction, for $K>1$, $\theta \in (-\pi/2,\pi/2]$ and $c\in \C$, we have $f_c= g_c \circ h_{K,\theta}$.

\subsection{The dynamics of $f_c$ and properties of $G_c$}

Iterates of quasiregular mappings are again quasiregular, but the maximal dilatation typically goes up. If there is a uniform bound on the maximal dilatation of the iterates of $f:\C\to\C$, then it is well-known that $f$ is quasiconformally conjugate to a holomorphic function. Thus, to be of independent interest, it is important to know from \cite[Theorem 2.6]{FF12} that the mappings $f_c$ are not uniformly quasiregular.

The escaping set for $f_c$ is a non-empty, open neighborhood of $\infty$ by \cite[Theorem 4.3]{FG10}. It follows that the bounded orbit set $BO(f_c)$ is the complement of the escaping set. All of $I(f_c), BO(f_c)$ and $\partial I(f_c)$ are completely invariant under $f_c$.
We can ask for a conjugation of $f_c$ to a simpler mapping in a neighborhood of infinity, analogous to B\"ottcher's Theorem. The following result yields this.

\begin{theorem}[\cite{FF12}, Theorem 2.1]
\label{thm:bottcher}
Let $K>1$, $\theta \in (-\pi/2,\pi/2]$ and $c\in \C$. Then there exist a neighborhood of infinity $U = U_{K,\theta,c}$ and a quasiconformal map $\psi_c = \psi_{K,\theta,c}$ defined in $U$ such that
\[ \psi_c \circ f_c = f_0 \circ \psi_c \]
holds in $U$.
\end{theorem}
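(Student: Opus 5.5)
The statement is the existence of B\"ottcher coordinates from \cite{FF12}. I would prove it by the standard B\"ottcher construction, adapted to the non-holomorphic setting: build $\psi_c$ as a limit of pullbacks $\psi_c = \lim_{n\to\infty} f_0^{-n}\circ f_c^n$, with appropriate branches. This requires a formula for the model map. Since $f_0 = g_0\circ h$ and $h$ is real-linear, we have $f_0^n = g_0^n\circ h_n$ for a suitable real-linear map $h_n$. Concretely, $f_0(z) = (h(z))^2$, and inductively $f_0^n(z) = (h(f_0^{n-1}(z)))^2$, so $f_0^n$ is a positively homogeneous map of degree $2^n$. Its angular part is the $n$-fold iterate of the degree-two circle map $e^{i\phi}\mapsto (h(e^{i\phi})/|h(e^{i\phi})|)^2$. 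This circle map is an orientation-preserving expanding covering of degree two, and hence is conjugate to $\phi\mapsto 2\phi$. That gives a well-defined branch structure for $f_0^{-n}$ on $\C\setminus\{0\}$, which lifts through the radial coordinate.

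The key estimate compares $f_c$ with $f_0$ near infinity: $f_c(z) = f_0(z)\,(1 + c/h(z)^2)$. For $|z|>R$ with $R$ large, the relative error is $O(|c|/|z|^2)$. I would define $\phi_n := f_0^{-n}\circ f_c^n$ on $U=\{|z|>R\}$, choosing the branch of $f_0^{-n}$ that is continuous in $z$ and tends to the identity as $c\to 0$. Then I would show the telescoping differences $\phi_{n+1}-\phi_n$ are summable. To do this, write $\phi_{n+1} = f_0^{-n}\circ\big(f_0^{-1}\circ f_c\big)\circ f_c^n$ and note that $f_0^{-1}\circ f_c(w) = w\,(1+O(|c|/|w|^2))$ in the sense of the homogeneous structure. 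Because $f_0^{-n}$ contracts relative errors like $2^{-n}$ in the $\log$ coordinates, and $|f_c^n(z)|$ grows doubly exponentially, the errors form a convergent series. The cleanest way to run this is in logarithmic coordinates: write $w = e^{\zeta}$ and lift $f_0$ to a map $F_0(\zeta) = 2\zeta + \text{(bounded periodic correction)}$, and $f_c$ to $F_c = F_0 + O(e^{-2\Re\zeta})$. Then $\Phi_n = F_0^{-n}\circ F_c^n$ converges uniformly, as in the classical B\"ottcher proof.

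Next I would check the conjugacy and quasiconformality. The conjugacy $\psi_c\circ f_c = f_0\circ\psi_c$ is immediate from $\phi_{n}\circ f_c = f_0\circ\phi_{n+1}$ in the limit. For quasiconformality, each $\phi_n$ is a composition of quasiregular maps, but its dilatation is not obviously bounded, since $f_c$ is not uniformly quasiregular. So I would instead show directly that the derivatives $D\phi_n$ converge locally uniformly to a limit close to $I$ for $|z|$ large. The derivative of $F_0^{-1}$ is a bounded linear map with uniformly controlled distortion, and $D F_c - D F_0 = O(e^{-2\Re\zeta})$. The chain rule therefore gives $D\Phi_n$ as a product of factors $I + O(\text{doubly exponentially small})$, conjugated by the cocycle of $DF_0$. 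The bounded-distortion property of this cocycle is the point. It holds because $DF_0$ is asymptotically $2\,\cdot$ (a fixed family of linear maps determined by the expanding circle map), and the cocycle of an expanding circle map has bounded distortion in the radial/angular splitting.

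Finally, injectivity on $U$ follows from $D\psi_c$ being close to $I$ on a convex neighbourhood in log coordinates, combined with degree considerations, since $\psi_c$ commutes with a degree-two covering of the model. This makes $\psi_c$ a quasiconformal map on $U$.

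The main obstacle is the derivative/distortion estimate: showing that conjugating the small errors by the non-conformal cocycle $D F_0^{-n}$ does not amplify them. I would first try to prove that the angular circle map is smoothly conjugate to doubling, which would reduce $F_0$ to an affine map in suitable coordinates, and then apply the classical argument verbatim.
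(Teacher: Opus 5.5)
Your construction is the one from \cite{FF12} recapped in Section~\ref{sec:bottreg}: $f_0^{-n}\circ f_c^n$ is exactly the recursion $\psi_{k+1}=f_0^{-1}\circ\psi_k\circ f_c$, run in logarithmic coordinates as $F_{k+1}=\widetilde{h^{-1}}\circ\widetilde{S}\circ F_k\circ\widetilde{f}$. However, the step you single out as the crux rests on false claims about the model map, and the route you propose through it cannot work.

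Take $\theta=0$. Near $\phi=0$ we have $\arg h(e^{i\phi})=\arctan(K^{-1}\tan\phi)$, so the angular map $e^{i\phi}\mapsto (h(e^{i\phi})/|h(e^{i\phi})|)^2$ fixes $\phi=0$ with multiplier $2/K$. Consequently:
\begin{itemize}
\item it is not expanding for $K\geq 2$;
\item for $K>2$ that fixed point is attracting, so the map is not even topologically conjugate to doubling;
\item for no $K>1$ is it $C^1$-conjugate to doubling, because multipliers at periodic points are $C^1$-conjugacy invariants.
\end{itemize}
Likewise, along the positive real axis $Df_0^n$ is a positive multiple of $h^n=\operatorname{diag}(K^n,1)$, so $\mathcal{H}(Df_0^n)=K^n$ (compare \cite[Theorem 2.6]{FF12}). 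The cocycle genuinely amplifies distortion, exponentially. In logarithmic coordinates $DF_0^{-1}=\operatorname{diag}(1/2,K/2)$ on that line, so $F_0^{-n}$ does not contract like $2^{-n}$ either. Your target, showing that the conjugated errors are not amplified, is therefore false.

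What you are missing is that no such control is needed.
\begin{itemize}
\item Each step of the recursion multiplies errors, and their first derivatives, by at most a constant depending on $K$ and $\theta$. This comes from the bounded derivatives of $\xi$, $\varphi$ and $\widetilde{f}$ \cite[Lemma 5.4]{FF12}; equivalently, $\mathcal{H}$ grows by at most a factor $K$ per step.
\item By contrast, $F_{k+1}-F_k$ at $X$ is ultimately controlled by $T_1=F_1-F_0$ at $\widetilde{f}^{\,k}(X)$. There $\Re\widetilde{f}^{\,k}(X)\geq 2^k(\Re X-C_6)$ by \eqref{eq:l55}, so this error is $O(|c|e^{-2^{k+1}(\Re X-C_6)})$.
\item A geometric factor $C^k$ against a doubly exponentially small error is summable, uniformly on $\{\Re X>\sigma_0\}$ once $\sigma_0$ is large.
\end{itemize}
This is precisely how the inductions close. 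In Lemma~\ref{lem:Fkdiff} the per-step constant is absorbed through $Ce^{\alpha^{k-1}(C_6-(2-\alpha)\sigma_0)}\leq 1$, and the same mechanism reappears as $\|\mathcal{E}_j\|\leq K^{j+1}\epsilon_j$ in Lemma~\ref{lem:Snestimate}. It yields $|\partial_X F-1|,\,|\partial_{\overline{X}}F|\leq C|c|e^{-2\Re X}$, hence asymptotic conformality.

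Two smaller points also simplify.
\begin{itemize}
\item No conjugacy is needed to fix branches, since $F_0^{-1}=\widetilde{h^{-1}}\circ\widetilde{S}$ is globally defined in logarithmic coordinates.
\item Injectivity does not need a degree argument. Each $T_k$ is $2\pi i$-periodic, so $F(X+2\pi i)=F(X)+2\pi i$. After enlarging $\sigma$, $F-\mathrm{id}$ has Lipschitz constant less than $1$ on the convex half-plane, so $F$ is injective there. The equivariance then transfers injectivity to $\psi_c$.
\end{itemize}
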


We can even semi-conjugate $f_c$ to multiplication by $2$ via the Green's function constructed in \cite{BF25}.

\begin{theorem}[\cite{BF25}, Theorem 1.1]
\label{thm:greens}
Let $K>1$ and $\theta \in (-\pi/2,\pi/2]$. For $c\in \C$, there exists a non-negative, continuous function $G_c : \C \to \R$ that is identically zero on $BO(f_c)$, non-zero on $I(f_c)$ and such that $G_c(f_c(z)) = 2G_c(z)$ for all $z\in \C$.
\end{theorem}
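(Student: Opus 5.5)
The statement to address is Theorem \ref{thm:greens}, quoted from \cite{BF25}. The plan is to build $G_c$ as the locally uniform limit $G_c(z)=\lim_{n\to\infty}2^{-n}\log^+|f_c^n(z)|$ and then read off each listed property from the limit.

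\textbf{Step 1: quantitative control of $f_c$ near infinity.} By \eqref{eq:hsv}, $|z|\le |h(z)|\le K|z|$. Hence for $|z|\ge R_0$, with $R_0$ large depending on $K$ and $|c|$ (locally uniformly in $c$),
\[ \tfrac12|z|^2 \le |f_c(z)| \le 2K^2|z|^2 .\]
Setting $u_n(z)=2^{-n}\log|f_c^n(z)|$ on the set where the orbit stays outside $\{|z|<R_0\}$, this gives
\[ |u_{n+1}-u_n| \le 2^{-(n+1)}\log(2K^2).\]
The sequence $(u_n)$ is therefore uniformly Cauchy there, and the limit $G_c$ satisfies $|G_c(z)-\log|z||\le \log(2K^2)$ for $|z|\ge R_0$.

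\textbf{Step 2: extend to all of $\C$.} On $I(f_c)$, every point has $f_c^m(z)$ in $\{|w|\ge R_0\}$ for some $m$, and that $m$ is locally constant by continuity. So we set $G_c(z)=2^{-m}G_c(f_c^m(z))$, which agrees with the limit formula. On $BO(f_c)$ the orbit is bounded, so $2^{-n}\log^+|f_c^n(z)|\to 0$ and $G_c=0$. The functional equation $G_c\circ f_c=2G_c$ is immediate from the limit formula. Positivity on $I(f_c)$ follows because escaping orbits eventually satisfy $|f_c^n(z)|>R_0$, and $R_0$ can be chosen so that the Step 1 bound gives $G_c>0$ there (enlarge $R_0$ so that $\log R_0>\log(2K^2)$).

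\textbf{Step 3: continuity.} This is the main obstacle, specifically continuity at points of $\partial I(f_c)$. Off the boundary it is easy: the convergence of Step 1 is uniform on compact subsets of $I(f_c)$, and $G_c$ vanishes on the interior of $BO(f_c)$.

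At a point $z_0\in\partial I(f_c)$ we need $G_c(z)\to0$ as $z\to z_0$ from $I(f_c)$. The approach is to use that $BO(f_c)$ is compact and contained in a disk $D_{R_0}$ that is forward invariant in the complement sense. Fix $\epsilon>0$ and choose $N$ with $2^{-N}(\log R_1+\log(2K^2))<\epsilon$, where $R_1=\max_{|w|\le R_0}|f_c(w)|$. Since $f_c^N(z_0)\in BO(f_c)\subset D_{R_0}$, continuity of $f_c^N$ gives $|f_c^N(z)|<R_0$ for $z$ near $z_0$.

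For such $z$, let $m\ge N$ be the first exit time from $D_{R_0}$. Then $|f_c^m(z)|\le R_1$, and by Step 1,
\[ G_c(z)=2^{-m}G_c(f_c^m(z))\le 2^{-N}\bigl(\log R_1+\log(2K^2)\bigr)<\epsilon .\]

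This argument needs $D_{R_0}$ chosen so that orbits leaving it never return. That holds because $|f_c(z)|\ge\tfrac12|z|^2>|z|$ for $|z|\ge R_0$ with $R_0>2$, which is exactly the escape criterion from \cite[Theorem 4.3]{FG10}.
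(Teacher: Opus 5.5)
Your proof is correct, but it takes a different route from the construction in \cite{BF25}.

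There, as recalled in the proof of Proposition \ref{prop:bottformula}, the function is built geometrically. One sets $G_0=\log|z/b_{K,\theta}(\arg z)|$, where $b_{K,\theta}(\phi)$ is the point where $\partial I(f_0)$ meets the ray of angle $\phi$. Then $G_c:=G_0\circ\psi_c$ near infinity, using the B\"ottcher coordinate of Theorem \ref{thm:bottcher}, and $G_c$ is pulled back to all of $\C$ by the functional equation. You instead take $\lim 2^{-n}\log^+|f_c^n|$ as the definition. This paper only obtains that formula a posteriori, in Proposition \ref{prop:bottformula}, using Proposition \ref{prop:bottreg}(d).

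Your Step 3 exit-time estimate is essentially the upper bound $G_c\le u_n$ that drives the paper's proof of Lemma \ref{lem:Gcont}. Since your $R_0$ and $R_1$ depend on $c$ only through an upper bound for $|c|$, the same argument gives joint continuity in $(c,z)$ directly.

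What your approach buys:
\begin{enumerate}
\item It is self-contained and elementary, and needs no B\"ottcher coordinate, which is itself a nontrivial result of \cite{FF12}.
\item It never uses $K>1$.
\item It comes with the explicit estimate $|G_c(z)-\log|z||\le\log(2K^2)$ near infinity.
\end{enumerate}

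What the geometric construction buys is structure the paper uses later:
\begin{enumerate}
\item The identity $G_c=G_0\circ\psi_c$ near infinity. This is needed for $G_{\mathcal{M}}=G_0\circ\Psi$ in the proof of Theorem \ref{thm:potential}, and for describing the extended domain of $\psi_c$ in Lemma \ref{lem:Psicont}.
\item The radial homogeneity of $G_0$ behind Lemma \ref{lem:jordan}.
\end{enumerate}
Both are easy to recover from your definition, but they are extra steps. The first follows from $\psi_c(w)=w+O(|c|/|w|)$ and the conjugacy, which is Proposition \ref{prop:bottformula} read backwards. The second follows from $f_0^n(rz)=r^{2^n}f_0^n(z)$ for $r>0$.

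One wording point in Step 2: the first index $m$ with $|f_c^m(z)|\ge R_0$ need not be locally constant. What you need, and what does hold, is that any fixed $m$ with $|f_c^m(z)|>R_0$ works on a whole neighbourhood of $z$. Combined with your non-return property, this also justifies the formula $G_c=2^{-m}G_c\circ f_c^m$ there.
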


In \cite{BF25}, $G_c$ is constructed geometrically, first for $G_0$ directly and then using the B\"ottcher coordinate to generalize to $G_c$. We point out here that in fact $G_c$ can be obtained via a formula familiar to complex dynamics. The proof uses Proposition \ref{prop:bottreg}(d) from Section \ref{sec:bottreg}, but there is no circularity, as nothing in Section \ref{sec:bottreg} depends on the results of this subsection.

\begin{proposition}
\label{prop:bottformula}
Let $K>1$, $\theta \in (-\pi/2, \pi/2 ]$ and $c\in \C$. Then
\[ G_c(z) = \lim_{n\to \infty} 2^{-n} \log^+ |f_c^n(z)|.\]
\end{proposition}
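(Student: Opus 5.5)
We plan to prove the formula first for the model map $f_0$, then transfer it to $f_c$ using the B\"ottcher coordinate $\psi_c$ and the functional equation $G_c(f_c(z)) = 2G_c(z)$ from Theorem \ref{thm:greens}. Both sides vanish on $BO(f_c)$. For the right-hand side this is because $|f_c^n(z)|$ stays bounded, so $2^{-n}\log^+|f_c^n(z)| \to 0$. For the left-hand side it is Theorem \ref{thm:greens}. So it suffices to treat $z \in I(f_c)$.

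For $f_0 = g_0\circ h$, we first compare $|f_0(w)| = |h(w)|^2$ with $|w|^2$. By \eqref{eq:hsv}, $|w| \le |h(w)| \le K|w|$, hence
\[ 2\log|w| \le \log|f_0(w)| \le 2\log|w| + 2\log K .\]
Set $u_n(w) = 2^{-n}\log|f_0^n(w)|$ for $|w|>1$. Then $0 \le u_{n+1} - u_n \le 2^{-n}\log K$, so $u_n$ converges uniformly on $\{|w|>1\}$ to a continuous limit $u$. This limit satisfies $u(f_0(w)) = 2u(w)$ and $u(w) = \log|w| + O(1)$. On $I(f_0)$, some iterate lands in $\{|w|>1\}$, and we extend $u$ by the functional equation.

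Next we identify $u$ with $G_0$. Here we need the geometric construction in \cite{BF25}: $G_0$ is built from the level sets of the model map and satisfies $G_0 = \log|w| + O(1)$ near infinity. If that asymptotic is not stated explicitly there, it should follow from Proposition \ref{prop:bottreg}(d), which we expect to say that $\psi_c$ is asymptotically the identity up to bounded distortion, i.e.\ $\log|\psi_c(z)| = \log|z| + O(1)$. Given the asymptotic, the difference $v = G_0 - u$ is bounded near infinity and satisfies $v(f_0^n(w)) = 2^n v(w)$. Since $f_0^n(w) \to \infty$ on $I(f_0)$, boundedness forces $v(w) = 2^{-n}v(f_0^n(w)) \to 0$. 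Hence $G_0 = u$ on $I(f_0)$.

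For general $c$, the construction in \cite{BF25} gives $G_c = G_0\circ\psi_c$ on $U_c$, extended to $I(f_c)$ via $G_c = 2^{-n}G_c\circ f_c^n$. Take $z \in I(f_c)$ and choose $N$ with $f_c^n(z) \in U_c$ for all $n \ge N$. Then
\[ G_c(z) = 2^{-n}G_0(\psi_c(f_c^n(z))) = 2^{-n}\log|\psi_c(f_c^n(z))| + O(2^{-n}) .\]
By Proposition \ref{prop:bottreg}(d), $\log|\psi_c(\zeta)| - \log|\zeta|$ is bounded for $\zeta \in U_c$. Multiplying by $2^{-n}$ gives $G_c(z) = \lim 2^{-n}\log^+|f_c^n(z)|$.

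The main obstacle is this bounded-log-distortion comparison $\log|\psi_c(\zeta)| = \log|\zeta| + O(1)$ on $U_c$. This is precisely what we are taking from Proposition \ref{prop:bottreg}(d). If that part instead gives $\psi_c(\zeta)/\zeta \to 1$ or a similar asymptotic, the bound follows immediately by shrinking $U_c$.
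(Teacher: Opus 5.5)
Your overall route is the paper's: settle $c=0$, then transfer to general $c$ via $G_c = G_0\circ\psi_c$ and the functional equation. The transfer step is fine, including your choice to treat every $z\in I(f_c)$ directly instead of pulling back from a neighbourhood of infinity.

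The gap is the one step that carries the content of the case $c=0$: the normalization $G_0(w) = \log|w| + O(1)$ near infinity. Your uniqueness argument really needs it. For any continuous positive $1$-periodic $\phi$, the function $u\,\phi(\log_2 u)$ also satisfies $v\circ f_0 = 2v$ and vanishes exactly on $BO(f_0)$. Yet you only assert the normalization, and your fallback cannot supply it. Proposition~\ref{prop:bottreg}(d) is a statement about $\psi_c$, and at $c=0$ it merely says $\psi_0=\mathrm{id}$. It can carry an asymptotic from $G_0$ over to $G_c$, but it says nothing about $G_0$ itself. Unlike the classical Green's function, $G_0$ is not $\log|\cdot|$ of a conjugacy to $z\mapsto z^2$: here $G_0(w)-\log|w|$ is a non-constant function of $\arg w$, so it is bounded but not $o(1)$. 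For the same reason you have put the main obstacle in the wrong place. The comparison $\log|\psi_c(\zeta)| = \log|\zeta| + o(1)$ is immediate from (d), since $|\psi_c(\zeta)/\zeta - 1| \le C'|c|/|\zeta|^2$.

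What is missing is the explicit structure of $G_0$ from \cite{BF25}, which is exactly what the paper uses. There $G_0 = \log|\tau_{K,\theta,0}|$ with $\tau_{K,\theta,0}(z) = z/b_{K,\theta}(\arg z)$, where $b_{K,\theta}(\phi)$ is the unique point of $\partial I(f_0)$ with argument $\phi$. Hence $G_0(w) = \log|w| - \log|b_{K,\theta}(\arg w)|$ on $I(f_0)$. Your own estimate $|w|^2\le|f_0(w)|\le K^2|w|^2$ shows $\{|w|>1\}\subset I(f_0)$ and $\{|w|<K^{-2}\}\subset BO(f_0)$. Therefore $K^{-2}\le|b_{K,\theta}|\le 1$, and $0\le G_0(w)-\log|w|\le 2\log K$ for $|w|>1$.

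With this in hand, the construction of $u$ and of $v=G_0-u$ is superfluous. The identity $G_0(w) = 2^{-n}G_0(f_0^n(w)) = 2^{-n}\log|f_0^n(w)| + O(2^{-n})$ gives the limit directly, which is the paper's argument. Your general-$c$ paragraph with $\psi_0=\mathrm{id}$ already covers $c=0$.
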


\begin{proof}
Consider first $c=0$. By \cite[Definitions 3.3 and 3.8]{BF25}, we have $G_0 = \log |\tau_{K,\theta,0}|$, where $\tau_{K,\theta,0}(z) = z / b_{K,\theta}(\arg z)$ and $b_{K,\theta}(\phi)$ is the unique element of $\partial I(f_0)$ with argument $\phi$ (see \cite[Definition 3.2 and Lemma 3.1]{BF25}). As $G_0(f_0^n(z)) = 2^nG_0(z)$, we obtain
\[ G_0(z) = 2^{-n} \left ( \log |f_0^n(z)| - \log |b_{K,\theta}(\arg f_0^n(z)) | \right )\]
for every $n$. By \cite[Lemma 2.3]{BF25}, the ball of radius $(2K^2)^{-1}$ about the origin lies in $BO(f_0)$, and $I(f_0)$ contains a neighbourhood of infinity, so $|b_{K,\theta}|$ is bounded above and below by positive constants. Letting $n\to \infty$, the second term above tends to $0$, and hence
\[ G_0(z) = \lim_{n\to \infty} 2^{-n} \log |f_0^n(z)|.\]
For general $c$, we have $G_c = G_0 \circ \psi_c$ by \cite[Definitions 3.6 and 3.8]{BF25}, and so in a neighbourhood of infinity,
\[ G_c(z) = \lim_{n\to \infty} 2^{-n} \log |\psi_c ( f_c^n(z))|.\]
As $\psi_c(w) = w + O(|c|/|w|)$ by Proposition \ref{prop:bottreg}(d) below, we have $\log |\psi_c(w)| = \log|w| + o(1)$ as $|w| \to \infty$, and so $\psi_c$ disappears in the limit. Finally, both $G_c$ and $z\mapsto \lim_n 2^{-n}\log^+|f_c^n(z)|$ vanish on $BO(f_c)$ and satisfy the functional equation $u\circ f_c = 2u$, so the identity propagates from a neighbourhood of infinity to all of $\C$ by pulling back, exactly as in the proof of \cite[Theorem 1.1]{BF25}.
\end{proof}

We will use the following growth estimate of $G_c$ later on.

\begin{corollary}
\label{cor:bott}
For all $z,c\in \C$ and $K\geq 1$, we have
\[ G_c(z) \leq \log \max ( |z|,|c|,1) + \log (K^2+1).\]
\end{corollary}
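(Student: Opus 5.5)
We propose to prove Corollary \ref{cor:bott} directly from the formula in Proposition \ref{prop:bottformula}, $G_c(z) = \lim_{n} 2^{-n}\log^+|f_c^n(z)|$, by bounding the growth of the orbit of $z$ under $f_c$.

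\textbf{The one-step estimate.} Set $M := \max(|z|,|c|,1)$ and $z_n := f_c^n(z)$. By \eqref{eq:hsv} we have $|h(w)| \leq K|w|$, and therefore
\[ |f_c(w)| \leq K^2|w|^2 + |c|. \]
Suppose $|w| \leq A$ for some $A \geq M \geq 1$. Since $|c| \leq M \leq A \leq A^2$, this gives
\[ |f_c(w)| \leq K^2A^2 + A^2 = (K^2+1)A^2. \]

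\textbf{Iterating the bound.} Define $A_0 := M$ and $A_{n+1} := (K^2+1)A_n^2$. The sequence $A_n$ is nondecreasing and every $A_n \geq M$, so the one-step estimate applies at each stage and gives $|z_n| \leq A_n$ by induction. Writing $L := \log(K^2+1)$, we have $\log A_{n+1} = 2\log A_n + L$. Solving this recursion,
\[ \log A_n = 2^n \log M + (2^n - 1)L. \]
It follows that
\[ 2^{-n}\log^+|z_n| \leq \log M + (1-2^{-n})L \leq \log M + L. \]
Here we used that $\log A_n \geq 0$, so $\log^+|z_n| \leq \log A_n$.

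\textbf{Conclusion.} Letting $n\to\infty$ and applying Proposition \ref{prop:bottformula} gives $G_c(z) \leq \log M + \log(K^2+1)$, which is the claimed bound.

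The case $K=1$ is covered by the same computation, with the classical Green's function formula in place of Proposition \ref{prop:bottformula}. The only delicate point is ensuring that the additive term $|c|$ is absorbed at every step, and this is handled by keeping $A_n \geq \max(|c|,1)$ throughout, so that $|c| \leq A_n^2$. No step here is expected to be a real obstacle.
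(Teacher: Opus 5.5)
Your proof is correct and is essentially the paper's argument: you use the same majorant sequence $A_{n+1} = (K^2+1)A_n^2$ with $A_0 = \max(|z|,|c|,1)$, absorb $|c|$ into $A_n^2$ at each inductive step in the same way, and pass to the limit via Proposition \ref{prop:bottformula}. Solving the recursion for $\log A_n$ explicitly, rather than telescoping $2^{-n}\log A_n$ as the paper does, is only a cosmetic difference.
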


\begin{proof}
We have the elementary estimate $|f_c(z)| \leq K^2|z|^2+|c|$. Set $\Theta_0 = \max (|z|,|c|,1)$ and $\Theta_{n+1} = (K^2+1)\Theta_n^2$, and write $z_n = f_c^n(z)$. If $|z_n| \leq \Theta_n$ and $\Theta_n \geq \max (|c|,1)$, then
\[ |z_{n+1}| \leq K^2\Theta_n^2 + |c| \leq K^2\Theta_n^2 + \Theta_n^2 = \Theta_{n+1},\]
using $\Theta_n \leq \Theta_n^2$, and moreover $\Theta_{n+1} \geq \Theta_n \geq \max(|c|,1)$. Hence both hypotheses propagate forward inductively. Taking logarithms, $a_n:= 2^{-n} \log \Theta_n$ satisfies
\[ a_{n+1} = a_n + 2^{-(n+1)} \log (K^2+1),\]
so $a_n \leq \log \Theta_0 + \log (K^2+1)$. Applying Proposition \ref{prop:bottformula}, we see that
\[ G_c(z) = \lim_{n\to \infty} 2^{-n} \log^+ |f_c^n(z)| \leq \lim_{n\to \infty} a_n \leq \log \Theta_0 + \log (K^2+1),\]
as needed.
\end{proof}

We will also need a lower bound for $G_c$, valid once the orbit is large. This is postponed to Lemma \ref{lem:Glower}, since it uses the orbit control of Lemma \ref{lem:Pescape}.

We finish this section by recording the structure of the equipotentials of the model map $f_0$. This is a place where $f_0$ behaves markedly better than $f_c$ for $c\notin \mathcal{M}_{K,\theta}$, as by \cite[Theorem 1.2]{BF25}, the set $\partial I(f_c)$ has uncountably many components when $c\notin \mathcal{M}_{K,\theta}$, whereas $0 \in BO(f_0)$ and so $0 \in \mathcal{M}_{K,\theta}$.

\begin{lemma}
\label{lem:jordan}
The map $b_{K,\theta} : [0,2\pi) \to \C$ is continuous and injective, $\partial I(f_0) = b_{K,\theta}([0,2\pi))$ is a Jordan curve, and for every $t>0$ we have
\begin{equation}
\label{eq:equipot}
\{ z: G_0(z) =t \} = e^t \partial I(f_0).
\end{equation}
In particular, $\{ z : G_0(z) > t \}$ is connected for every $t>0$.
\end{lemma}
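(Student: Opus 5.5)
The plan is to read off all three claims from the explicit description $G_0 = \log|\tau_{K,\theta,0}|$ with $\tau_{K,\theta,0}(z) = z/b_{K,\theta}(\arg z)$, recalled in the proof of Proposition \ref{prop:bottformula}. Here $b_{K,\theta}(\phi)$ is the unique point of $\partial I(f_0)$ with argument $\phi$ (\cite[Lemma 3.1]{BF25}).

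\textbf{Step 1: $b_{K,\theta}$ is injective and continuous.} Injectivity is immediate, since distinct $\phi\in[0,2\pi)$ give distinct arguments. For continuity I would use the facts quoted above. First, $|b_{K,\theta}|$ lies in a compact interval $[m,M]\subset(0,\infty)$, because the disk of radius $(2K^2)^{-1}$ is in $BO(f_0)$ and $I(f_0)$ contains a neighbourhood of infinity. Second, uniqueness says each ray from $0$ meets the closed set $\partial I(f_0)$ in exactly one point.

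Now suppose $\phi_k\to\phi$. By compactness, any subsequence of $b_{K,\theta}(\phi_k)$ has a further subsequence converging to some point $w$. This $w$ has modulus in $[m,M]$ and argument $\phi$, and $w\in\partial I(f_0)$ because the boundary is closed. Uniqueness then forces $w=b_{K,\theta}(\phi)$, so the whole sequence converges to $b_{K,\theta}(\phi)$. The same argument gives continuity at the wrap-around $\phi\to2\pi^-$, since it compares with $b_{K,\theta}(0)$. So $b_{K,\theta}$ extends to a continuous injective map of the circle $\R/2\pi\Z$, and its image $\partial I(f_0)$ is a Jordan curve.

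\textbf{Step 2: the level sets.} Take $z\neq0$ with $\arg z=\phi$. Then $G_0(z)=t$ is equivalent to $|z| = e^t|b_{K,\theta}(\phi)|$, that is, $z = e^t b_{K,\theta}(\phi)$, since $z$ and $b_{K,\theta}(\phi)$ have the same argument. This gives \eqref{eq:equipot} for $t>0$. One thing must be checked here: the formula $G_0=\log|\tau_{K,\theta,0}|$ has to hold on all of $I(f_0)$, and $G_0$ has to vanish inside. This is exactly \cite[Definitions 3.3, 3.8]{BF25}. For $t>0$ the relevant points lie outside the curve anyway, since $\log|\tau|>0$ there.

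\textbf{Step 3: connectedness.} By the same computation, $\{G_0>t\} = \{re^{i\phi} : r> e^t|b_{K,\theta}(\phi)|\}$. This is the image of $[0,2\pi)\times(1,\infty)$, with the ends identified, under the continuous map $(\phi,s)\mapsto s\,e^t b_{K,\theta}(\phi)$. That domain is a half-open annulus, which is connected, so its image is connected.

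\textbf{Main obstacle.} The only real content is the continuity in Step 1, which hinges on uniqueness of the radial boundary point (star-shapedness of $BO(f_0)$ with respect to $0$) together with closedness of $\partial I(f_0)$. If uniqueness were known only in a weaker form, I would instead derive continuity from continuity of $G_0$ (Theorem \ref{thm:greens}). Along each ray, $r\mapsto G_0(re^{i\phi})=\log r-\log|b_{K,\theta}(\phi)|$ is strictly increasing, and then $b_{K,\theta}(\phi)$ is characterised as the point $e^{-t}\cdot(\text{zero of } G_0(\cdot\,e^{i\phi})-t)$. Continuity of $b_{K,\theta}$ would then follow from continuity of $G_0$ via a standard implicit-function/monotonicity argument.
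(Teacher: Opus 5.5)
Your proposal is correct and follows the paper's proof essentially step for step. Continuity of $b_{K,\theta}$ comes from the two-sided bound on $|b_{K,\theta}|$, closedness of $\partial I(f_0)$ and uniqueness of the boundary point on each ray; the ray formula $G_0(re^{i\phi}) = \log(r/|b_{K,\theta}(\phi)|)$ then identifies the level and superlevel sets. The differences are cosmetic. The paper derives the ray formula from the homogeneity $G_0(rz) = \log r + G_0(z)$ of \cite[Lemma 3.4]{BF25} together with $G_0 = 0$ on $\partial I(f_0)$, rather than from $\tau_{K,\theta,0}$. It proves connectedness by moving radially outwards to a large circle rather than via a continuous image of an annulus. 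Your explicit check of continuity at the wrap-around $\phi \to 2\pi^-$ is a welcome bit of extra care.
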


\begin{proof}
By \cite[Lemma 3.1]{BF25}, $\partial I(f_0) \cap R_{\phi}$ consists of the single point $b_{K,\theta}(\phi)$, where $R_{\phi}$ denotes the ray $\{te^{i\phi} : t\geq 0\}$; injectivity of $b_{K,\theta}$ is immediate since $\arg b_{K,\theta}(\phi) = \phi$. As noted in the proof of Proposition \ref{prop:bottformula}, $|b_{K,\theta}|$ is bounded above and below by positive constants. For continuity, suppose $\phi_n \to \phi$. The sequence $(b_{K,\theta}(\phi_n))$ is bounded, and if $L$ is any limit point then $L \in \partial I(f_0)$, since $\partial I(f_0)$ is closed, and $L \in R_{\phi}$ and so, by uniqueness, $L = b_{K,\theta}(\phi)$. Hence $b_{K,\theta}(\phi_n) \to b_{K,\theta}(\phi)$. A continuous injection of the circle into $\C$ is a homeomorphism onto its image, so $\partial I(f_0)$ is a Jordan curve.

By \cite[Lemma 3.4]{BF25} we have $G_0(rz) = \log r + G_0(z)$ for $r>0$, and $G_0$ vanishes precisely on $BO(f_0)$, so $G_0 = 0$ on $\partial I(f_0)$. Therefore $G_0(re^{i\phi}) = \log ( r / |b_{K,\theta}(\phi)| )$, which gives \eqref{eq:equipot} and
\[ \{ z: G_0(z) > t \} = \{ re^{i\phi} : r > e^t |b_{K,\theta}(\phi)| \}.\]
This set is connected: any two of its points may be joined by moving radially outwards to a common circle $\{|z| = \varrho\}$ with $\varrho > e^t \max_{\phi} |b_{K,\theta}(\phi)|$, and then along that circle.
\end{proof}

\section{Properties of the B\"ottcher coordinate}
\label{sec:bottreg}

To perform our analysis, we will need further properties of the B\"ottcher coordinate from Theorem \ref{thm:bottcher}. More precisely, we will need the following result.

\begin{proposition}
\label{prop:bottreg}
There is a constant $a = a(K,\theta) \geq 1$ such that, on the set
\[ \mathcal{D} := \left \{ (c,z) \in \C \times \C : |z| \geq a\sqrt{1+|c|} \right \}, \]
the B\"ottcher coordinate $\psi_c(z)$ of Theorem \ref{thm:bottcher} is defined, with no extension needed, and has the following properties, with all implied constants depending only on $K$ and $\theta$.
\begin{enumerate}[(a)]
\item The map $(c,z) \mapsto \psi_c(z)$ is jointly $C^1$.
\item The $c$-partial derivatives of $\psi_c$ satisfy
\[ \partial_c \psi_c(z) = O(|z|^{-1}),\quad \partial_{\overline{c}} \psi_c(z) = O(|z|^{-1}),\]
as $|z|\to \infty$, with implied constant independent of $c$.
\item The $z$-partial derivatives of $\psi_c$ satisfy
\[ \partial_z \psi_c(z) = 1+ O(|c|/|z|^2), \quad \partial_{\overline{z}} \psi_c(z) = O(|c|/|z|^2),\]
as $|z| \to\infty$.
\item There is a constant $C' = C'(K,\theta)$ such that
\[ |\psi_c(z) - z| \leq C' \frac{|c|}{|z|}.\]
\end{enumerate}
\end{proposition}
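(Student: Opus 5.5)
I would construct $\psi_c$ on $\mathcal{D}$ as an explicit infinite-product (telescoping) limit, in the spirit of the classical Böttcher construction. The estimates (a)–(d) are then read off from uniform convergence of that limit and of its derivatives.

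\textbf{Construction.} Since $f_0(w) = (h(w))^2$, I first solve the equation $f_c = f_0\circ \phi_c$ near infinity by a root-type map. Concretely, write
\[ f_c(z) = h(z)^2\Bigl(1 + \frac{c}{h(z)^2}\Bigr), \]
so that $f_c(z) = f_0\bigl(z\,(1+c/h(z)^2)^{1/2}\bigr)$ up to checking that $h$ commutes with real scaling. It does not commute with complex multiplication, which is the subtlety; I would instead use $h^{-1}$ to write
\[ \phi_c(z) := h^{-1}\Bigl(h(z)\bigl(1+c\,h(z)^{-2}\bigr)^{1/2}\Bigr), \]
with the principal branch. This is well-defined and $C^\infty$ in $(c,z)$ once $|h(z)|^2 \ge |z|^2 \ge 2|c|$, and it satisfies $f_0\circ\phi_c = f_c$. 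Its error obeys $|\phi_c(z)-z| \le C|c|/|z|$, with $c$-derivatives $O(|z|^{-1})$ and $z$-derivatives $I+O(|c|/|z|^2)$, using $\|h\|=K$ and $\ell(h)=1$.

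\textbf{Definition of $\psi_c$.} I would set
\[ \psi_c = \lim_{n\to\infty} f_0^{-n}\circ f_c^n, \]
with the branch of $f_0^{-n}$ fixed by continuity from $\psi_c(z)\sim z$. Equivalently, $\psi_c$ is the limit of
\[ \psi^{(n)} = f_0^{-n}\circ \phi_c\circ f_c^{n-1}\cdots, \]
a telescoping composition. By uniqueness (the extension in \cite{FF12} is determined by normalization at infinity), this agrees with the map of Theorem \ref{thm:bottcher}.

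\textbf{Choice of $a$.} I choose $a$ so that on $\mathcal{D}$ the orbit satisfies $|f_c^n(z)| \ge |z|^{2^n}$-type growth. Since $|f_c(z)| \ge |z|^2-|c| \ge |z|^2/2$ when $|z|^2\ge 2|c|$, and $|z|\ge a\sqrt{1+|c|}$ propagates, $\mathcal{D}$ is forward invariant in $z$.

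\textbf{Key step: contraction of the inverse branches.} The inverse branch of $f_0$ on $\{|w|\ge \rho\}$ is Lipschitz with constant $\lesssim 1/\rho$, because $D f_0^{-1} = Dh^{-1}\cdot\frac{1}{2\sqrt{\cdot}}$. Hence each telescoping increment
\[ \psi^{(n+1)}-\psi^{(n)} \]
is bounded by $\prod_{j<n}(C/|z_j|)\cdot |c|/|z_n|$. This sum is dominated by its first term $C'|c|/|z|$, which gives (d).

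\textbf{Derivatives.} For (a)–(c) I differentiate the composition by the chain rule. The $z$-derivative of $\psi^{(n)}$ is
\[ \prod_j Df_0^{-1}(\cdot)\,D\phi_c(\cdot)\,Df_c(\cdot). \]
Pairing each $Df_0^{-1}$ at $\psi$-points with $Df_c$ at orbit points, I use $Df_c(z_j) = 2h(z_j)\cdot h$ against $Df_0(\phi(z_j)) = 2h(\phi(z_j))\cdot h$. Each factor is then $I+O(|c|/|z_j|^2)$, the product converges uniformly, and the sum is dominated by the $j=0$ term. This gives (c) and joint continuity of the $z$-derivative.

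For the $c$-derivative, each $\partial_c f_c^n$ grows like $\prod Df_c$. After the $f_0^{-n}$ contraction this leaves a sum $\sum_j \prod_{i<j}(\cdot)^{-1}$ of size $O(|z|^{-1})$, giving (b). Uniform convergence of all these derivatives on compact subsets of $\mathcal{D}$ yields (a).

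\textbf{Main obstacle.} The hardest part will be the branch bookkeeping for $f_0^{-1}$: $f_0$ is a degree-two quasiregular map and $h$ does not commute with complex multiplication. I would handle it by working in the coordinate $w=h(z)$, where $f_0$ becomes $w\mapsto h(w^2)$ and the square root can be taken directly. I would also need care in making the derivative-product estimates uniform in $c$; this is where the precise form $|z|\ge a\sqrt{1+|c|}$ is used.
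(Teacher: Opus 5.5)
Your outline is sound, and it takes a genuinely different route from the paper's.

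\textbf{The paper's route.} The paper never leaves the logarithmic coordinate of \cite{FF12}. It writes $F_k(X)=X+T_k(X)$ with $F_{k+1}=\widetilde{h^{-1}}\circ\widetilde{S}\circ F_k\circ\widetilde{g_c}\circ\widetilde{h}$. It upgrades the exponent in \cite[Lemma 5.7]{FF12} to $\alpha=2$, giving $|T_k|,|\partial_X T_k|,|\partial_{\overline{X}}T_k|\le C|c|e^{-2\Re X}$. It then runs a separate induction for $\partial_cT_k$ and $\partial_{\overline{c}}T_k$, tracked together; the $c$-independence in (b) comes from $\partial_c\rho_1$ carrying no factor $|c|$. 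Convergence of all derivatives comes from the doubly exponential increment bounds of Lemmas \ref{lem:Fkdiff} and \ref{lem:Fcdiff}. In log coordinates the square-root branch is free, since $\widetilde{S}(X)=X/2$ on a half-plane. The price is that $D\widetilde{h^{-1}}$ is merely bounded (of size about $K$). So $\widetilde{h^{-1}}\circ\widetilde{S}$ need not contract, and convergence has to be squeezed out of $\Re\widetilde{f}(X)\approx 2\Re X$, with $\alpha<2$ and $\sigma_0$ large.

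\textbf{Your route.} Your $z$-plane telescoping uses instead that $\|h^{-1}\|=1$, so $\|Df_0^{-1}(w)\|=(2|w|^{1/2})^{-1}$. It makes (b) almost immediate through the exact identity
\[ D_c\psi^{(n)}(z) = Df_0^{-n}(z_n)\, D_c\bigl(f_c^n(z)\bigr) = D_z\psi^{(n)}(z)\sum_{k=1}^{n}\Pi_k(z)^{-1}, \qquad \Pi_k(z) := Df_c(z_{k-1})\cdots Df_c(z_0), \]
where $\|\Pi_k(z)^{-1}\|\le\prod_{i<k}(2|z_i|)^{-1}$. The constant is then independent of $c$ simply because $|c|/|z|^2\le a^{-2}$ on $\mathcal{D}$. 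This is the decomposition $D_cP_n=\Pi_nR_n$ of Section \ref{sec:orientation}, combined with the $f_0$-side bookkeeping of Section \ref{sec:gap}. So your proof reuses the paper's own later machinery rather than the estimates of \cite{FF12}.

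\textbf{Points to repair when you write it out.}

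First, the inverse branch is Lipschitz on $\{|w|\ge\rho\}$ with constant $(2\sqrt{\rho})^{-1}$, not $\lesssim\rho^{-1}$. Your bound $\prod_{j<n}C/|z_j|$ is nevertheless right, because the branches are applied near the points $z_{j+1}$ and $|z_{j+1}|^{1/2}\gtrsim|z_j|$ on $\mathcal{D}$.

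Second, in the $z$-derivative you must pair $Df_c(z_j)$ with $Df_0(u_j)$, where $u_j=f_0^j(\psi^{(n)}(z))=\psi^{(n-j)}(z_j)$, not $\phi_c(z_j)$. The resulting factors also do not commute. Writing $D_z\psi^{(n)}$ as a positive scalar times a product of terms $I+E_j$, the $j$-th error is conjugated through $j$ copies of $h$. Hence $\|E_j\|=O(K^{j+1}|c|/|z_j|^2)$, exactly as in Lemma \ref{lem:Snestimate}, rather than $O(|c|/|z_j|^2)$. The doubly exponential growth of $|z_j|$ absorbs the factor $K^{j+1}$ once $a$ is large in terms of $K$, so (c) survives.

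Third, because the $u_j$ depend on $n$, the $D_z\psi^{(n)}$ are not partial products of one fixed infinite product. Uniform convergence of derivatives therefore needs an increment estimate. Differentiating $\psi^{(n+1)}-\psi^{(n)}=f_0^{-n}\circ\phi_c\circ f_c^n-f_0^{-n}\circ f_c^n$ gives a bound of order $(2K)^n|c|/|z_n|^2$ for the $z$-derivative, and a similar summable bound for the $c$-derivative. This is the $z$-plane analogue of Lemmas \ref{lem:Fkdiff} and \ref{lem:Fcdiff}, and it yields (a).

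Finally, passing to $w=h(z)$ does not remove the branch problem, since you still need a square root on an annulus. The clean fix is the recursion $\psi^{(n+1)}(z)=h^{-1}\bigl(h(z)\sqrt{\psi^{(n)}(f_c(z))/h(z)^2}\,\bigr)$ with the principal root. This is legitimate because the ratio under the root is $1+O(a^{-2})$ on $\mathcal{D}$. It also shows that your $\psi^{(n)}$ is literally the sequence $\psi_k$ of \cite{FF12}, so no separate uniqueness argument is needed.
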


\subsection{The set-up}

To establish these properties, we will need to perform a finer analysis of the work in \cite{FF12}. We will not completely re-do the content of that paper, but some overlap is necessary. For consistency, we will aim to keep as much of the notation of \cite{FF12} as possible.

To start, if $f$ is defined in a neighbourhood of infinity $\{ z : |z| >R \}$ and has no zeros, then its logarithmic transform is defined by $\ft (X) = \log f (e^X)$ in the right half-plane $\{ X: \Re X > \log R \}$ for an appropriate branch of the logarithm. We will work in the half-plane $L := \{ X:\Re X > \sigma \}$, for an appropriately chosen $\sigma >0$.

Recalling that $g_c(z) = z^2+c$, it follows by a computation \cite[Lemma 3.11]{FF12} that $\widetilde{g_c}(X) = 2X + \log (1+ce^{-2X})$.
From \cite[Lemma 5.5]{FF12}, the logarithmic transform of $f_c$ satisfies
\begin{equation}
\label{eq:l55}
| \Re \widetilde{f}(X) - 2\Re X| \leq C_6,
\end{equation}
for some constant $C_6$. This constant $C_6$ is the notation from \cite{FF12}; it depends only on $K$ and $\theta$ once $|ce^{-2X}| \leq \tfrac12$ on $L$, which we arrange below. We will always assume $\sigma > C_6$, so that $\widetilde{f}(L) \subset L$. For simplicity, we make a notational change from \cite{FF12}: we write $C$ for any constant depending only on $K$, $\theta$ and, where a parameter $\alpha \in (1,2)$ is present, on $\alpha$, but not on $c$, $X$ or $\sigma$. The specific value of $C$ may change at each step. Several of the inductions below are completed only if $\sigma$ is large enough, and each such requirement involves only $K$, $\theta$ and $\alpha$; we fix once and for all a constant $\sigma_0 = \sigma_0(K,\theta,\alpha) > C_6$ satisfying all of them, and take $\sigma \geq \sigma_0$.

For convenience, we set
\begin{equation}
\label{eq:rhoc}
\rho(X,c) = \log(1+ce^{-2X}).
\end{equation}
By \cite[Lemma 3.12]{FF12}, the logarithmic transforms of $h = h_{K,\theta}$ and its inverse are
\[ \widetilde{h}(X) = X + \log \left [ \left( \frac{K+1}{2}\right ) + e^{2i\theta - 2i\Im X} \left (\frac{K-1}{2}\right ) \right ] \]
and
\[ \widetilde{h^{-1}}(X) = X + \log \left [ \left ( \frac{K+1}{2K} \right ) - e^{2i\theta - 2i\Im X} \left ( \frac{K-1}{2K} \right ) \right ]. \]
Again for convenience, we set $\varphi(X) = \widetilde{h}(X) - X$ and $\xi (X) = \widetilde{h^{-1}}(X) - X$.

The B\"ottcher coordinate is constructed by defining a sequence $(\psi_k)_{k=1}^{\infty}$, where $\psi_1 = f_0^{-1} \circ f_c$ for a branch of the inverse of $f_0$ so that $\psi_1(z) = z+o(1)$ as $|z|\to \infty$. Inductively, $\psi_{k+1}(z) = f_0^{-1} \circ \psi_k \circ f_c$ with the same choice for the branch of the inverse. Then $\psi_k$ converges to $\psi_c$. It turns out to be more amenable to perform computations in the logarithmic transform, so we set $F_k  = \widetilde{\psi_k}$ in $L$. Then
\begin{equation}
\label{eq:Fk+1}
F_{k+1} = \widetilde{h^{-1}} \circ \widetilde{S} \circ F_k \circ \widetilde{g_c} \circ \widetilde{h},
\end{equation}
where $\widetilde{S} (X) = X/2$ is the logarithmic transform of the square root. We also write $F_k(X) = X +T_k(X)$, so that $T_0 \equiv 0$.

We note that as we will be estimating based on expanding the logarithm term in \eqref{eq:rhoc}, we will need $|ce^{-2X}| \leq \tfrac12$, so we will need to take $\sigma > \tfrac12 \log (2|c|)$. Together with the requirement $\sigma \geq \sigma_0$, this means that we work on the half-plane $L = \{ X : \Re X > \sigma \}$ with
\begin{equation}
\label{eq:sigma}
\sigma = \sigma(c) := \max \left \{ \sigma_0, \tfrac12 \log (2|c|) \right \},
\end{equation}
which in the $z$-variable is the domain $\{ |z| > e^{\sigma} \}$. Setting $a(K,\theta) := 2\max \{ e^{\sigma_0}, 1\}$, this domain contains
\begin{equation}
\label{eq:A}
|z| \geq a(K,\theta) \sqrt{1+|c|},
\end{equation}
which is the set $\mathcal{D}$ of Proposition \ref{prop:bottreg}. We stress that although $\sigma$ depends on $c$, the constants $C$ do not.

\subsection{Introducing $c$ into the picture and proving~(d)}

At this point, we wish to make the $c$-dependence of the functions here clearer. As $\widetilde{h}$, $\widetilde{h^{-1}}$ and $\widetilde{S}$ only depend on $K$ and $\theta$, the only place $c$ enters the picture is with $\widetilde{g_c}$. By \eqref{eq:rhoc}, we have
\[ | \rho ( X+\varphi(X) , c)| \leq C|c| e^{-2\Re X} .\]
Then the proof of \cite[Lemma 5.6]{FF12} goes through verbatim with this extra $|c|$ factor to give
\begin{equation}
\label{eq:T1}
|T_1(X)| \leq C|c| e^{-2\Re X}.
\end{equation}
Here, we observe that the conclusion of \cite[Lemma 5.7]{FF12} may actually be strengthened. There the estimate $|T_k(X)| < C_8 e^{-\alpha \Re X}$ was given for some $\alpha \in(1,2)$. However, a closer analysis of that proof shows that in fact it holds for $\alpha =2$. In the language of that proof, all that is needed is that $C_8$ is chosen larger than $(1+C_2)C_5$ and that $\sigma_0$ is chosen so that $(1+C_2)e^{2C_6}e^{-2\sigma_0} <1$; the same applies to the version with the extra $|c|$ factor, since $|c|$ multiplies both $C_5$ and $C_8$. Combining this minor upgrade with \eqref{eq:T1} shows that
\begin{equation}
\label{eq:Tkbound}
|T_k(X)| \leq C |c| e^{-2 \Re X}
\end{equation}
for all $k\in \N$. The proof of \cite[Lemma 5.8]{FF12} then goes through as written there, for any fixed $\alpha \in (1,2)$, with the conclusion that
\[ |F_{k+1}(X) - F_k(X) | \leq C|c| e^{-\alpha^k \Re X} \]
for all $k\geq 0$. This is used to establish that $T_k$ converges to $T$ uniformly for $X \in L$ and, moreover, $|T(X)| \leq C |c| e^{-2 \Re X}$.
Undoing the logarithmic transform exactly as in \cite[Section 5.4]{FF12}, and setting $z=e^X$ yields
\begin{align*}
|\psi_c(z)-z| &=| \exp ( \log z + T(\log z) ) - z | \\
&= | z | \cdot | \exp [T(\log z)] -1| \\
& \leq  |z| |T(\log z) | e^{|T(\log z)|}\\
&\leq C|c| |z|^{-1} ,
\end{align*}
using $|e^{w}-1| \leq |w|e^{|w|}$ and the fact that $|T| \leq C|c|e^{-2\sigma} \leq C$ is bounded on $L$ by \eqref{eq:sigma}. This proves Proposition \ref{prop:bottreg}~(d).

\subsection{Partial derivatives with respect to $z$ and proving~(c)}

By \eqref{eq:rhoc}, we have
\[ \rho_X = \frac{-2ce^{-2X}}{1+ce^{-2X}}, \quad \rho_c = \frac{e^{-2X}}{1+ce^{-2X}},\quad \rho_{\overline{X}} = \rho_{\overline{c}} \equiv 0.\]
We set $\rho_1(X,c) = \rho(X+\varphi(X), c)$. Then modifying \cite[Lemma 5.10]{FF12} as we did in the previous subsection, it follows that
\begin{equation}
\label{eq:rho1X}
|\partial_X (\rho_1)| \leq C |c| e^{-2\Re X}, \quad |\partial_{\overline{X}} (\rho_1)| \leq C|c|e^{-2 \Re X}.
\end{equation}
We will also note for later that $\rho$ is holomorphic in $c$, and hence so is $\rho_1$, since the argument $X+\varphi(X)$ does not depend on $c$. Consequently
\begin{equation}
\label{eq:rho1c}
|\partial_c (\rho_1)| \leq Ce^{-2\Re X},\quad \partial_{\overline{c}}(\rho_1) \equiv 0.
\end{equation}
We emphasize that the bound in \eqref{eq:rho1c} carries no factor of $|c|$, in contrast with \eqref{eq:rho1X}. This is the reason the implied constant in Proposition \ref{prop:bottreg}~(b) may be taken independent of $c$.

Now, from \eqref{eq:Fk+1}, we can write
\[ F_{k+1}(X) = \frac{F_k(\widetilde{f}(X))}{2} + \xi \left ( \frac{ F_k(\widetilde{f}(X))}{2} \right ).\]
Just as in \cite[(5.10)]{FF12}, we write
\begin{equation}
\label{eq:Pk}
W_k(X) = \frac{F_k(\widetilde{f}(X))}{2} = X + \varphi(X) + \frac{\rho_1(X)}{2} + \frac{T_k(\widetilde{f}(X))}{2},
\end{equation}
so that $F_{k+1} = W_k + \xi(W_k)$. We set $F_{k+1} - F_k = \Delta_k + [ \xi(W_k) - \xi(W_{k-1}) ]$, where
\begin{equation}
\label{eq:deltak}
\Delta_k = W_k - W_{k-1} = \tfrac12 \left ( T_k(\widetilde{f}) - T_{k-1}(\widetilde{f}) \right ).
\end{equation}
From here, the proof of \cite[Proposition 5.11]{FF12} essentially goes through word for word, with the extra $|c|$ factor and with $\alpha = 2$, to yield
\begin{equation}
\label{eq:pXTk}
| \partial_X (T_k)| \leq C |c|e^{-2\Re X},\quad |\partial_{\overline{X}}(T_k)| \leq C |c| e^{-2\Re X}
\end{equation}
for all $k\in \N$. Then
\[ |\partial_X (F_k) - 1| \leq C|c|e^{-2\Re X}, \quad |\partial_{\overline{X}} (F_k)| \leq C|c|e^{-2\Re X}.\]
Undoing the logarithmic transform yields part~(c).

We record separately the estimate we shall need for $T_k$ evaluated at $\widetilde{f}(X)$. This is not the conclusion of \cite[Lemma 5.12]{FF12}, which bounds the derivative of the composite $X \mapsto T_k(\widetilde{f}(X))$; rather, it is the intermediate step in the proof of that lemma, namely \eqref{eq:pXTk} applied at the point $\widetilde{f}(X) \in L$, combined with \eqref{eq:l55}. Writing $\partial_w$ for the derivative of $T_k$ with respect to its own argument, we obtain
\begin{equation}
\label{eq:Tkpartials}
|\partial_w T_k ( \widetilde{f}(X)) | \leq C |c| e^{-4\Re X}, \quad | \partial_{\overline{w}} T_k(\widetilde{f}(X)) | \leq C |c| e^{-4 \Re X}
\end{equation}
for all $k\in \N$.

We will also need the following $X$-derivative version of \cite[Lemma 5.8]{FF12}, which does not appear in that paper.

\begin{lemma}
\label{lem:Fkdiff}
Given $\alpha\in (1,2)$, for all $k\geq 0$ and all $X \in L$ we have
\[ |\partial_X (F_{k+1} - F_k) | \leq C |c| e^{-\alpha^k \Re X},\quad |\partial_{\overline{X}} (F_{k+1} - F_k) | \leq C |c| e^{-\alpha^k \Re X} .\]
\end{lemma}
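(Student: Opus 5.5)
We prove the lemma by induction on $k$, mirroring the proof of \cite[Lemma 5.8]{FF12} but applied to the $X$- and $\overline{X}$-derivatives. By \eqref{eq:Fk+1} and \eqref{eq:deltak}, we have $F_{k+1}-F_k = \Delta_k + [\xi(W_k)-\xi(W_{k-1})]$ with $\Delta_k = \tfrac12\big( (T_k-T_{k-1})\circ\widetilde f\big) = \tfrac12 \big( (F_k - F_{k-1})\circ \widetilde f \big)$.

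The base case $k=0$ reads $F_1 - F_0 = T_1$. Here we need $|\partial_X T_1|, |\partial_{\overline X} T_1| \leq C|c|e^{-\Re X}$. This follows from \eqref{eq:pXTk} with $k=1$, which even gives $e^{-2\Re X}$, and $e^{-2\Re X}\le e^{-\Re X}$ on $L$.

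For the inductive step, write $D$ for either $\partial_X$ or $\partial_{\overline X}$, and differentiate the decomposition.

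\textbf{The $\Delta_k$ term.} By the chain rule,
\[ D\Delta_k = \tfrac12\Big[ \partial_w (F_k-F_{k-1})(\widetilde f(X))\, D\widetilde f(X) + \partial_{\overline w}(F_k-F_{k-1})(\widetilde f(X))\, D\overline{\widetilde f(X)} \Big]. \]
The derivatives of $\widetilde f$ are bounded by a constant $C_*=C_*(K,\theta)$. Indeed, $\widetilde f = \widetilde{g_c}\circ \widetilde h$, the map $\widetilde h$ has bounded derivative $1+\varphi'$, and $\widetilde{g_c}' = 2 + \rho_X$ with $|\rho_X|\le 4|ce^{-2X}|\le 2$ on $L$. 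The induction hypothesis at the point $\widetilde f(X)\in L$, combined with \eqref{eq:l55}, gives $e^{-\alpha^{k-1}\Re \widetilde f(X)} \le e^{\alpha^{k-1}C_6} e^{-2\alpha^{k-1}\Re X}$. Hence
\[ |D\Delta_k| \le C_* C|c|\, e^{\alpha^{k-1}C_6}\, e^{-2\alpha^{k-1}\Re X}. \]
Now use $2\alpha^{k-1} - \alpha^k = \alpha^{k-1}(2-\alpha)$ and $\Re X > \sigma\ge\sigma_0$. These give $e^{-2\alpha^{k-1}\Re X} \le e^{-\alpha^k\Re X} e^{-\alpha^{k-1}(2-\alpha)\sigma_0}$. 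Choosing $\sigma_0$ so that $(2-\alpha)\sigma_0 > C_6 + \log(4C_*)$, say, the prefactor is at most $\tfrac14$ uniformly in $k$, because $\alpha^{k-1}\ge1$. This keeps the constant $C$ from growing with $k$.

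\textbf{The $\xi$ term.} The function $\xi$ depends only on $\Im$ of its argument and is smooth with bounded first and second derivatives. Write $\xi(W_k)-\xi(W_{k-1}) = \int_0^1 \frac{d}{ds}\xi(W_{k-1}+s\Delta_k)\,ds$, which is linear in $\Delta_k$ and $\overline{\Delta_k}$ with coefficients $\partial\xi,\overline\partial\xi$ evaluated along the segment. Differentiating gives two kinds of terms:
\begin{enumerate}
\item terms with the coefficient bounded and $D\Delta_k$ or $D\overline{\Delta_k}$, controlled as above;
\item terms with a second derivative of $\xi$ times $DW_{k-1}$ (or $D\Delta_k$) times $|\Delta_k|$.
\end{enumerate}
For the second kind, $DW_{k-1}$ is bounded by \eqref{eq:Pk}, \eqref{eq:rho1X} and \eqref{eq:Tkpartials}. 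The factor $|\Delta_k|$ is bounded via \cite[Lemma 5.8]{FF12} at $\widetilde f(X)$ by $C|c|e^{-2\alpha^{k-1}\Re X + \alpha^{k-1}C_6}$, and this is absorbed exactly as before.

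\textbf{Main obstacle.} The main difficulty is this bookkeeping: the constant must not grow with $k$, and the stray $|c|$ factors must not compound. Products such as $|c|^2$ appear in the second-kind terms and can be absorbed using $|c|e^{-2\Re X}\le 1/2$ on $L$. I would first check that every term can be bounded by $\tfrac12 C|c|e^{-\alpha^k\Re X}$ once $\sigma_0$ is fixed depending only on $K,\theta,\alpha$. This closes the induction with the same constant $C$, simultaneously for $\partial_X$ and $\partial_{\overline X}$.
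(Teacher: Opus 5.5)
Your proposal is correct and follows essentially the same route as the paper: induction on $k$ via $F_{k+1}-F_k=\Delta_k+[\xi(W_k)-\xi(W_{k-1})]$, the inductive hypothesis applied at $\widetilde f(X)$ together with \eqref{eq:l55}, and the gain $e^{-\alpha^{k-1}(2-\alpha)\sigma_0}$ to keep the constant uniform in $k$. The only differences are cosmetic: you write the $\xi$-difference as an integral along the segment from $W_{k-1}$ to $W_k$, whereas the paper differentiates directly and bounds the differences of the partial derivatives of $\xi$ by $C|\Delta_k|$; and your illustrative choice of $\sigma_0$ must be enlarged to absorb the coefficients (of total size up to $1+2C_\xi$) multiplying $D\Delta_k$ and $D\overline{\Delta_k}$, as you already anticipated.
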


\begin{proof}
For the case $k=0$, we have $F_1(X) - F_0(X) = T_1(X)$. Then as $e^{-2\Re X} \leq e^{-\alpha \Re X}$, the base case follows from \eqref{eq:pXTk}.
Now assume the claimed inequalities hold for $k-1 \geq 0$. Recall that $F_{k+1} - F_k = \Delta_k + [ \xi(W_k) - \xi(W_{k-1}) ]$, so that, using \cite[Lemma 5.9]{FF12},
\begin{align*}
\partial_X (F_{k+1} - F_k) &= \left ( 1 + \partial_X \xi (W_k) \right ) \partial_X \Delta_k + \partial_{\overline{X}} \xi (W_k) \overline{ \partial_{\overline{X}} \Delta_k} \\
& \hskip0.3in + \left [ \partial_X \xi (W_k) - \partial_X \xi(W_{k-1}) \right ] \partial_X W_{k-1} \\
& \hskip0.3in + \left [ \partial_{\overline{X}} \xi (W_k) - \partial_{\overline{X}} \xi(W_{k-1}) \right ] \overline{\partial_{\overline{X}} W_{k-1}}.
\end{align*}
By the inductive hypothesis, applied at the point $\widetilde{f}(X) \in L$, together with \eqref{eq:l55},
\[ |\partial_w (F_k - F_{k-1})(\widetilde{f}(X)) | \leq C|c| e^{-\alpha^{k-1}\Re \widetilde{f}(X) } \leq C|c|e^{C_6\alpha^{k-1}}e^{-2\alpha^{k-1} \Re X}.\]
As $|\partial_X \widetilde{f}|$ and $|\partial_{\overline{X}} \widetilde{f} |$ are bounded by \cite[Lemma 5.4]{FF12} and \eqref{eq:rho1X}, it follows from \eqref{eq:deltak} that
\[ |\partial_X \Delta_k| \leq C|c| e^{\alpha^{k-1}(C_6 - (2-\alpha)\sigma)} e^{-\alpha^k \Re X},\]
and similarly for $\partial_{\overline{X}}\Delta_k$. The partial derivatives of $\xi$ are bounded above by \cite[Lemma 5.4]{FF12}, and the differences of the partial derivatives of $\xi$ are bounded by $C|\Delta_k|$, again by \cite[Lemma 5.4]{FF12}, where $|\Delta_k| \leq C|c|e^{\alpha^{k-1}C_6}e^{-2\alpha^{k-1}\Re X}$ by \eqref{eq:deltak}, \eqref{eq:l55} and the version of \cite[Lemma 5.8]{FF12} recorded in the previous subsection; the factors $\partial_X W_{k-1}$ and $\partial_{\overline{X}} W_{k-1}$ are bounded by \eqref{eq:Pk}, \eqref{eq:pXTk} and \eqref{eq:rho1X}. All terms are therefore bounded by $C|c| e^{\alpha^{k-1}(C_6 - (2-\alpha)\sigma_0)}e^{-\alpha^k \Re X}$, and since $\sigma_0$ was chosen so that $Ce^{\alpha^{k-1} ( C_6 - (2-\alpha) \sigma_0)} \leq 1$ for all $k\geq 1$ (this is one of the requirements on $\sigma_0$), the inductive step is complete. A similar argument runs for the $\overline{X}$-derivatives. 
\end{proof}

\subsection{Partial derivatives with respect to $c$ and proving~(b) and~(a)}

This is the point where we fully diverge from the work in \cite{FF12}. First, if $\eta$ is a function of a complex variable, with no explicit dependence on $c$, and it is composed with $w=W(c)$, then the Chainr Rule for complex derivatives gives
\begin{equation}
\label{eq:eta}
\partial_c ( \eta (W)) = [\partial_w \eta (W)] \partial_c W + [\partial_{\overline{w}} \eta(W)] \overline{\partial_{\overline{c}} W}, \quad
\partial_{\overline{c}} ( \eta (W)) = [\partial_w \eta (W)] \partial_{\overline{c}}W + [\partial_{\overline{w}} \eta (W)] \overline{\partial_c W}.
\end{equation}

Using \eqref{eq:eta} with $w= W_k$, \eqref{eq:Pk}, $\partial_c \widetilde{f} = \partial_c \rho_1$ and $\partial_{\overline{c}} \widetilde{f} = 0$, the partial derivatives of $W_k$ with respect to $c$ are
\begin{equation}
\label{eq:Ppartials}
\partial_c W_k = \tfrac12 \partial_c \rho_1 + \tfrac12 \left [ \partial_c T_k (\widetilde{f}) + \partial_w T_k (\widetilde{f}) \partial_c \rho_1 \right],\quad
\partial_{\overline{c}} W_k =  \tfrac12 \left [  \partial_{\overline{c}} T_k (\widetilde{f}) + \partial_{\overline{w}} T_k (\widetilde{f}) \overline{\partial_c \rho_1} \right ].
\end{equation}
We note that $\partial_{\overline{c}}W_k$ contains no forcing term involving $\rho_1$. Since $\rho_1$ is holomorphic in $c$, all antiholomorphic dependence on the parameter is generated by the non-conformality of $h$, through $\varphi$ and $\xi$ alone, and vanishes identically when $K=1$.

We may now run an induction on $k$, using \eqref{eq:rho1c}, \eqref{eq:Tkpartials} and \eqref{eq:Ppartials}, to obtain
\begin{equation}
\label{eq:cTk}
|\partial_c T_k| \leq C e^{-2\Re X}, \quad |\partial_{\overline{c}} T_k | \leq Ce^{-2\Re X}
\end{equation}
for all $k\in \N$. Indeed, the case $k=0$ is trivial as $T_0 \equiv 0$. Assuming \eqref{eq:cTk} for $k$, \eqref{eq:Ppartials} together with \eqref{eq:rho1c}, \eqref{eq:l55} and \eqref{eq:Tkpartials} gives $|\partial_c W_k| \leq Ce^{-2\Re X}$ and $|\partial_{\overline{c}}W_k| \leq Ce^{-4\Re X}$. Since $F_{k+1} = W_k + \xi(W_k)$, \eqref{eq:eta} gives
\[ \partial_c F_{k+1} = \left ( 1 + \partial_w \xi (W_k) \right ) \partial_c W_k + \partial_{\overline{w}} \xi(W_k) \overline{\partial_{\overline{c}} W_k},\]
and the analogous formula for $\partial_{\overline{c}}F_{k+1}$. As the partial derivatives of $\xi$ are bounded above by \cite[Lemma 5.4]{FF12}, and as $T_{k+1}(X) = F_{k+1}(X) - X$ has the same $c$-derivatives as $F_{k+1}$, this completes the induction, with the following observations. Write $C_\xi$ for a bound on the first partial derivatives of $\xi$, $C_\rho$ for the constant in \eqref{eq:rho1c}, and let $C_k$ be a common bound for both $c$-derivatives, so that $|\partial_c T_k| \leq C_k e^{-2\Re X}$ and $|\partial_{\overline{c}} T_k| \leq C_k e^{-2\Re X}$ on $L$; the two derivatives must be tracked together, since $\partial_c F_{k+1}$ involves $\partial_{\overline{c}}W_k$ and vice versa. The terms $\tfrac12 \partial_c T_k(\widetilde{f})$ and $\tfrac12 \partial_{\overline{c}} T_k(\widetilde{f})$ in \eqref{eq:Ppartials} each contribute at most $\tfrac12 C_k e^{2C_6}e^{-2\sigma_0} \cdot e^{-2\Re X}$ by \eqref{eq:l55}, while the terms involving $\partial_w T_k(\widetilde{f})$ and $\partial_{\overline{w}}T_k(\widetilde{f})$ contribute at most $C|c|e^{-6\Re X} \leq Ce^{-4\Re X} \leq Ce^{-2\sigma_0}e^{-2\Re X}$ by \eqref{eq:Tkpartials}, using $e^{-2\Re X} \leq (2|c|)^{-1}$ on $L$; this is where the factor $|c|$ in \eqref{eq:Tkpartials} is absorbed. Hence, from the formula for $\partial_c F_{k+1}$,
\[ C_{k+1} \leq (1+C_\xi) \cdot \tfrac12 C_\rho + \tfrac12 (1+2C_\xi) e^{2C_6}e^{-2\sigma_0} C_k + C e^{-2\sigma_0},\]
and the formula for $\partial_{\overline{c}} F_{k+1}$ gives the same bound with $C_\rho$ replaced by $C_\xi C_\rho \leq (1+C_\xi)C_\rho$. Once $\sigma_0$ is so large that $(1+2C_\xi)e^{2C_6}e^{-2\sigma_0} \leq 1$, the choice $C := (1+C_\xi)C_\rho + 2Ce^{-2\sigma_0}$ satisfies $C_k \leq C$ for all $k$ by induction. Consequently
\[  |\partial_c F_k| \leq C e^{-2\Re X}, \quad |\partial_{\overline{c}} F_k | \leq Ce^{-2\Re X} \]
for all $k$. These estimates persist as we take the limit $k\to \infty$. As $\partial_c \widetilde{\psi} = (\partial_c \psi) / \psi$, we obtain
\[ \partial_c \psi = O( |z| \cdot |z|^{-2}) = O(|z|^{-1})\]
and similarly for $\partial_{\overline{c}} \psi$, proving~(b). We note that, by \eqref{eq:rho1c}, the constants here do not depend on $c$.

For the part~(a), we will need the following $c$-derivative version of Lemma \ref{lem:Fkdiff}.

\begin{lemma}
\label{lem:Fcdiff}
Given $\alpha\in (1,2)$, for all $k\geq 0$ and all $X \in L$ we have
\[ |\partial_c (F_{k+1} - F_k) | \leq C \max\{1,|c|\} e^{-\alpha^k \Re X},\quad |\partial_{\overline{c}} (F_{k+1} - F_k) | \leq C \max\{1,|c|\} e^{-\alpha^k \Re X} .\]
\end{lemma}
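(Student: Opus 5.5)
We will argue by induction on $k$, mirroring the proof of Lemma \ref{lem:Fkdiff} but differentiating in $c$ instead of $X$, and tracking the $\partial_c$ and $\partial_{\overline{c}}$ derivatives together, as in the proof of \eqref{eq:cTk}.

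\textbf{Base case.} For $k=0$ we have $F_1 - F_0 = T_1$. By \eqref{eq:cTk}, both $c$-derivatives of $T_1$ are bounded by $Ce^{-2\Re X} \leq Ce^{-\alpha\Re X}$, since $\Re X > \sigma > 0$.

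\textbf{Decomposition for the inductive step.} Write $F_{k+1}-F_k = \Delta_k + [\xi(W_k)-\xi(W_{k-1})]$ as before. The chain rule \eqref{eq:eta} gives
\[ \partial_c(F_{k+1}-F_k) = (1+\partial_w\xi(W_k))\,\partial_c\Delta_k + \partial_{\overline{w}}\xi(W_k)\,\overline{\partial_{\overline{c}}\Delta_k} + [\partial_w\xi(W_k)-\partial_w\xi(W_{k-1})]\,\partial_c W_{k-1} + [\partial_{\overline{w}}\xi(W_k)-\partial_{\overline{w}}\xi(W_{k-1})]\,\overline{\partial_{\overline{c}}W_{k-1}}, \]
and there is an analogous formula for $\partial_{\overline{c}}$. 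We treat the two groups of terms separately.

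\textbf{The $\xi$-difference terms.} By \cite[Lemma 5.4]{FF12}, the differences of the partial derivatives of $\xi$ are bounded by $C|\Delta_k|$. We have $|\Delta_k| \leq C|c|e^{\alpha^{k-1}C_6}e^{-2\alpha^{k-1}\Re X}$, as in the proof of Lemma \ref{lem:Fkdiff}. The factors $\partial_c W_{k-1}$ and $\partial_{\overline{c}}W_{k-1}$ are bounded by a constant, as shown in the proof of \eqref{eq:cTk}. So these terms contribute $C|c|e^{\alpha^{k-1}(C_6-(2-\alpha)\sigma_0)}e^{-\alpha^k\Re X}$. This is where the factor $|c|$, and hence $\max\{1,|c|\}$, enters.

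\textbf{The $\Delta_k$ terms (main obstacle).} Here $\Delta_k = \tfrac12 (F_k-F_{k-1})(\widetilde{f}(X,c))$ depends on $c$ both explicitly and through $\widetilde{f}$, with $\partial_c\widetilde{f} = \partial_c\rho_1$ and $\partial_{\overline{c}}\widetilde{f}=0$. Applying \eqref{eq:eta} gives
\[ \partial_c\Delta_k = \tfrac12\left[\partial_c(F_k-F_{k-1}) + \partial_w(F_k-F_{k-1})\,\partial_c\rho_1\right](\widetilde{f}), \]
and similarly for $\partial_{\overline{c}}\Delta_k$ with $\overline{\partial_c\rho_1}$.
\begin{enumerate}[(1)]
\item The explicit terms are bounded by the inductive hypothesis evaluated at $\widetilde{f}(X)\in L$, together with \eqref{eq:l55}, by $C\max\{1,|c|\}e^{\alpha^{k-1}C_6}e^{-2\alpha^{k-1}\Re X}$.
\item The chain-rule terms are bounded using Lemma \ref{lem:Fkdiff} at $\widetilde{f}(X)$ and \eqref{eq:rho1c}, by $C|c|e^{\alpha^{k-1}C_6}e^{-2\alpha^{k-1}\Re X}e^{-2\Re X}$.
\end{enumerate}
Both bounds are at most $C\max\{1,|c|\}e^{\alpha^{k-1}(C_6-(2-\alpha)\sigma_0)}e^{-\alpha^k\Re X}$.

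\textbf{Closing the induction.} The difficulty is that the constant multiplying the inductive hypothesis must not grow with $k$. Term (1) feeds back with coefficient $\tfrac12(1+2C_\xi)e^{\alpha^{k-1}(C_6-(2-\alpha)\sigma_0)}$. We impose on $\sigma_0$ the requirement that this coefficient, and all the other constants above, are at most $\tfrac12$ for every $k\geq1$. This is possible because $\alpha<2$, so the exponent $\alpha^{k-1}(C_6-(2-\alpha)\sigma_0)$ is negative and decreasing once $\sigma_0 > C_6/(2-\alpha)$, and the requirement depends only on $K$, $\theta$ and $\alpha$. With this choice, a fixed constant $C$ propagates, and the $\overline{c}$ case follows identically.
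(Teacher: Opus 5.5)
Your proposal is correct and follows essentially the same route as the paper. Both argue by induction on $k$ via $F_{k+1}-F_k=\Delta_k+[\xi(W_k)-\xi(W_{k-1})]$, bound the $\xi$-difference terms by $C|\Delta_k|$ times the bound on the $c$-derivatives of $W_{k-1}$, and control $\partial_c\Delta_k$, $\partial_{\overline{c}}\Delta_k$ through the inductive hypothesis at $\widetilde{f}(X)$ together with Lemma \ref{lem:Fkdiff} and \eqref{eq:rho1c}; your explicit tracking of the feedback coefficient $\tfrac12(1+2C_\xi)e^{\alpha^{k-1}(C_6-(2-\alpha)\sigma_0)}$ makes precise a step that the paper covers only with ``exactly as in the proof of Lemma \ref{lem:Fkdiff}''.
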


\begin{proof}
The base case $k=0$ follows from \eqref{eq:cTk}, since $F_1 - F_0 = T_1$ and $e^{-2\Re X} \leq e^{-\Re X}$. Now suppose that the claimed inequalities hold for $k-1\geq 0$. Differentiating $F_{k+1} - F_k = \Delta_k + [\xi(W_k) - \xi(W_{k-1})]$ in $c$ and using \eqref{eq:eta} gives
\begin{align*}
\partial_c ( F_{k+1} - F_k) &= (1+\partial_w \xi (W_k) )\partial_c \Delta_k + \partial_{\overline{w}} \xi (W_k) \overline{\partial_{\overline{c}} \Delta_k} \\
& \hskip0.3in + [ \partial_w \xi(W_k) - \partial_w \xi (W_{k-1}) ] \cdot \partial_c W_{k-1} \\
& \hskip0.3in + [\partial_{\overline{w}} \xi (W_k) - \partial_{\overline{w}} \xi(W_{k-1})] \cdot \overline{ \partial_{\overline{c}} W_{k-1} }.
\end{align*}
Now $|\partial_c W_{k-1}|$ and $|\partial_{\overline{c}} W_{k-1}|$ are both bounded by $Ce^{-2\Re X}$, as established above. The factors involving the differences of the partial derivatives of $\xi$ are bounded by $C|\Delta_k|$ by \cite[Lemma 5.4]{FF12}, and $|\Delta_k| \leq C|c|e^{\alpha^{k-1}C_6}e^{-2\alpha^{k-1}\Re X}$ by \eqref{eq:l55}, \eqref{eq:deltak} and \cite[Lemma 5.8]{FF12}. Finally, differentiating \eqref{eq:deltak} in $c$ and using the inductive hypothesis at the point $\widetilde{f}(X)$, together with \eqref{eq:rho1c} and Lemma \ref{lem:Fkdiff}, gives
\[ |\partial_c \Delta_k| + |\partial_{\overline{c}}\Delta_k| \leq C\max\{1,|c|\} e^{\alpha^{k-1}C_6} e^{-2\alpha^{k-1}\Re X}.\]
Putting this together, we have
\begin{align*}
|\partial_c (F_{k+1} - F_k) | & \leq C|\partial_c \Delta_k| + C |\partial_{\overline{c}} \Delta_k| + C|\Delta_k|e^{-2\Re X}\\
&\leq C\max\{1,|c|\} e^{\alpha^{k-1}(C_6 - (2-\alpha)\sigma_0 )} e^{-\alpha^k \Re X},
\end{align*}
exactly as in the proof of Lemma \ref{lem:Fkdiff}, which completes the inductive step. A similar argument runs for the $\overline{c}$-derivatives.
\end{proof}

We have
\[ \sum_{k=0}^{\infty} \sup_{X \in L} | \partial_c (F_{k+1} - F_k ) | \leq C\max\{1,|c|\} \sum_{k=0}^{\infty} e^{-\alpha^k \sigma_0},\]
with the latter sum convergent. It follows that $\partial_c F_k$ converges uniformly on $L$, and similarly for $\partial_XF_k$ by Lemma \ref{lem:Fkdiff}. As $F_k \to F$ uniformly by \cite[Section 5.4]{FF12}, the limit $F$ is $C^1$ and its derivatives are the limits of the derivatives. Each $\partial_c F_k$ is continuous in $(c,X)$ and the convergence is uniform, locally uniformly in $c$, and so the limit derivatives are jointly continuous. This gives~(a) and completes the proof of Proposition \ref{prop:bottreg}.

\section{Important Estimates}
\label{sec:estimates}

Throughout this section and the next, we fix the constant
\begin{equation}
\label{eq:kappa0}
\kappa_0 = \kappa_0(K,\theta) := \max \left \{ 4 , \; 1+\sqrt{2K}, \; 2KC', \; a(K,\theta)^2 + 1 \right \},
\end{equation}
where $C'$ is the constant from Proposition \ref{prop:bottreg}~(d) and $a(K,\theta)$ is the constant from \eqref{eq:A}. Each of the four requirements is used at an identifiable point below, and we indicate them as they arise.

\subsection{Orbit control}

Recall that $P_0(c) = c$ and $P_{n+1}(c) = h(P_n(c))^2 + c$ for $n\geq 0$. We first record a growth estimate for a general orbit of $f_c$. The case of interest is when the orbit starts at $P_m(c)$.

\begin{lemma}
\label{lem:orbit}
Let $K\geq 1$, $c\in \C$ and $\kappa >2$. Suppose that $z\in \C$ satisfies $|z| \geq \max \{ |c|,\kappa \}$ and write $z_n = f_c^n(z)$ and $S = |z|$. Then $(|z_n|)_{n\geq 0}$ is non-decreasing, $|z_n| \geq S$ for all $n\geq 0$, and
\[ |z_n| \geq S(S-1)^{2^n-1}.\]
In particular $z\in I(f_c)$. Moreover, the right hand side is non-decreasing in $S$, so $S$ may be replaced by any lower bound exceeding $1$.
\end{lemma}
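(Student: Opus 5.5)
The plan is a one-step inequality followed by induction. For any $w\in\C$, the estimate $|h(w)|\geq |w|$ from \eqref{eq:hsv} gives
\[ |f_c(w)| = |h(w)^2 + c| \geq |h(w)|^2 - |c| \geq |w|^2 - |c|. \]
If $|w|\geq |c|$ this becomes $|f_c(w)| \geq |w|^2 - |w| = |w|(|w|-1)$. If in addition $|w| \geq \kappa > 2$, then $|w|-1 > 1$, so $|f_c(w)| \geq |w|$.

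The monotonicity claim follows by induction. Assume $|z_n| \geq |z_{n-1}| \geq \dots \geq |z_0| = S \geq \max\{|c|,\kappa\}$. Then $z_n$ satisfies both $|z_n|\geq |c|$ and $|z_n| \geq \kappa$, so the one-step bound gives $|z_{n+1}| \geq |z_n|(|z_n|-1) \geq |z_n|$. Hence $(|z_n|)$ is non-decreasing and $|z_n|\geq S$ for all $n$.

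For the quantitative bound, I will prove $|z_n| \geq S(S-1)^{2^n-1}$ by induction on $n$. The case $n=0$ is trivial. Set $s_n := S(S-1)^{2^n-1}$ and note that $s_n \geq S$ because $S-1>1$. The function $x\mapsto x(x-1)$ is increasing for $x\geq 1$, so from $|z_n| \geq s_n$ we get
\[ |z_{n+1}| \geq |z_n|(|z_n|-1) \geq s_n(s_n-1). \]
It then suffices to show $s_n - 1 \geq (S-1)^{2^n}$, because then
\[ s_n(s_n-1) \geq S(S-1)^{2^n-1}(S-1)^{2^n} = S(S-1)^{2^{n+1}-1}. \]
The inequality $s_n - 1 \geq (S-1)^{2^n}$ can be rewritten as
\[ S(S-1)^{2^n-1} - (S-1)(S-1)^{2^n-1} = (S-1)^{2^n-1} \geq 1, \]
which holds since $S-1\geq 1$. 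This completes the induction.

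Since $S-1>1$, the lower bound tends to infinity, so $z\in I(f_c)$. The right-hand side $S(S-1)^{2^n-1}$ is a product of increasing positive factors on $S>1$, so it is non-decreasing there. Consequently any lower bound $S'\in(1,S]$ for $|z|$ also gives $|z_n|\geq S'(S'-1)^{2^n-1}$.

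The only delicate step is the algebraic inequality $s_n(s_n-1)\geq s_{n+1}$. It reduces to $(S-1)^{2^n-1}\geq 1$, which is exactly where the hypothesis $\kappa>2$ (giving $S-1>1$) is used.
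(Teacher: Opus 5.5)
Your proof is correct; the monotonicity part matches the paper's, but you obtain the quantitative bound by a different route.

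The paper weakens the one-step estimate to a uniform factor,
\[ |z_{n+1}| \geq \Bigl(1-\tfrac{|c|}{|z_n|^2}\Bigr)|z_n|^2 \geq \vartheta |z_n|^2, \qquad \vartheta = 1-1/S. \]
It then rescales by $v_n = \vartheta|z_n|$, which turns the recursion into pure squaring, $v_{n+1}\geq v_n^2$. The closed form $|z_n| \geq (\vartheta S)^{2^n}/\vartheta = S(S-1)^{2^n-1}$ then follows in one line.

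You instead keep the sharper, non-uniform step $|z_{n+1}|\geq |z_n|(|z_n|-1)$. You then check by induction that the explicit target $s_n$ is carried forward by the increasing map $x\mapsto x(x-1)$. This rests on the identity $s_n-(S-1)^{2^n}=(S-1)^{2^n-1}$ together with $S\geq 2$.

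The paper's substitution explains where the form $S(S-1)^{2^n-1}$ comes from and needs no guessing of the bound. Your argument is a pure verification, equally rigorous and fully elementary.

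In the paper, the hypothesis $\kappa>2$ is used only to secure $|c|\leq|z_n|$ and $|z_n|\geq S$. In your version it is used there too, so your remark that the algebraic inequality is "exactly where" $\kappa>2$ is used slightly understates its role.
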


\begin{proof}
By \eqref{eq:hsv}, $|h(w)| \geq |w|$ for all $w$, and therefore
\begin{equation}
\label{eq:Pescape1}
|z_{n+1}| = |h(z_n)^2 + c| \geq |h(z_n)|^2 - |c| \geq |z_n|^2- |c|.
\end{equation}
Suppose that $|z_n| \geq S$. As $|c| \leq S \leq |z_n|$, \eqref{eq:Pescape1} gives
\[ |z_{n+1}| \geq |z_n|^2 - |z_n| = |z_n| ( |z_n| -1 ) \geq |z_n| (\kappa-1) \geq |z_n| \geq S,\]
using $\kappa>2$. Hence, by induction, $|z_n| \geq S$ for all $n\geq 0$ and the sequence is non-decreasing.

Next, for every $n$, $|c| \leq S \leq |z_n|$ gives $|c|/|z_n|^2 \leq 1/|z_n| \leq 1/S$, so by \eqref{eq:Pescape1},
\[ |z_{n+1}| \geq |z_n|^2 \left ( 1 - \frac{|c|}{|z_n|^2} \right ) \geq \vartheta |z_n|^2, \quad \vartheta := 1 - \frac{1}{S} \in (0,1).\]
Setting $v_n = \vartheta |z_n|$, we obtain $v_{n+1} \geq \vartheta^2 |z_n|^2 = v_n^2$, and hence $v_n \geq v_0^{2^n}$ for all $n\geq 0$. As $\vartheta S = S-1$ and $\vartheta^{-1} = S/(S-1)$, this gives
\[ |z_n| \geq \frac{(\vartheta S)^{2^n}}{\vartheta} = \frac{S}{S-1}(S-1)^{2^{n}} = S(S-1)^{2^{n}-1}.\]
Finally, $S\mapsto S(S-1)^{2^n-1}$ is non-decreasing on $(1,\infty)$ since $2^n - 1 \geq 0$.
\end{proof}

\begin{corollary}
\label{lem:Pescape}
Let $K\geq 1$, $c \in \C$ and $\kappa>2$. Suppose that $m\geq 0$ and $|P_m(c)| \geq \max \{ |c|,\kappa \}$, and set $S = |P_m(c)|$. Then $c\notin \mathcal{M}_{K,\theta}$, the sequence $(|P_n(c)|)_{n\geq m}$ is non-decreasing, and for each $n\geq m$,
\[ |P_n(c)| \geq S(S-1)^{2^{n-m}-1}.\]
\end{corollary}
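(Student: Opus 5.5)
This is a direct application of Lemma \ref{lem:orbit} to the orbit of $f_c$ starting at the point $z := P_m(c)$. First I need to check that this orbit is the tail of the sequence $(P_n(c))$. By the definition \eqref{eq:Pn}, $P_n(c) = f_c^{n}(c)$, and so $f_c^{k}(P_m(c)) = f_c^{m+k}(c) = P_{m+k}(c)$ for every $k\geq 0$. Hence, in the notation of Lemma \ref{lem:orbit}, $z_k = P_{m+k}(c)$.

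The hypothesis $|P_m(c)| \geq \max\{|c|,\kappa\}$ with $\kappa>2$ is exactly the hypothesis of Lemma \ref{lem:orbit} for $z = P_m(c)$, with $S = |P_m(c)|$. That lemma then gives three conclusions.

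First, $(|z_k|)_{k\geq 0}$ is non-decreasing. This is the statement that $(|P_n(c)|)_{n\geq m}$ is non-decreasing.

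Second, taking $k = n-m$ for $n\geq m$, we get $|z_k| \geq S(S-1)^{2^k-1}$, which is the claimed bound $|P_n(c)| \geq S(S-1)^{2^{n-m}-1}$.

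Third, $z\in I(f_c)$, so $P_m(c)\in I(f_c)$. Since $I(f_c)$ is completely invariant and $P_m(c) = f_c^{m+1}(0)$, it follows that $0\in I(f_c)$. Alternatively, the lower bound tends to infinity because $S-1\geq \kappa-1>1$, so $f_c^{n}(0) = P_{n-1}(c)\to\infty$. Either way, the definition \eqref{def:mandelbrot} gives $c\notin\mathcal{M}_{K,\theta}$.

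There is no real obstacle here. The only point that needs care is the index bookkeeping: recording that $P_n(c) = f_c^{n+1}(0)$ while $P_{m+k}(c) = f_c^k(P_m(c))$, so the exponent in the bound is $2^{n-m}$ and not $2^{n-m+1}$.
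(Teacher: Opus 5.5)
Your proposal is correct and follows the paper's proof exactly: apply Lemma \ref{lem:orbit} with $z = P_m(c)$, use $P_{m+k}(c) = f_c^k(P_m(c))$ to read off the monotonicity and the bound $|P_n(c)| \geq S(S-1)^{2^{n-m}-1}$, and conclude $c \notin \mathcal{M}_{K,\theta}$ because $P_n(c) \to \infty$ forces $f_c^n(0) \to \infty$. Your index bookkeeping is also correct.
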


\begin{proof}
Apply Lemma \ref{lem:orbit} with $z = P_m(c)$, noting that $P_{m+n}(c) = f_c^n(P_m(c))$ and that $P_n(c) \to \infty$ forces $f_c^n(0) \to \infty$.
\end{proof}

We can now provide the lower bound for the Green's function promised in Section 2.

\begin{lemma}
\label{lem:Glower}
Let $K\geq 1$, $c\in \C$ and $\kappa > 2$, and suppose $|z| \geq \max \{ |c|,\kappa \}$. Then, with $S=|z|$,
\[ \log |z| - \frac{2}{S} \; \leq \; G_c(z) \; \leq \; \log|z| + 2\log K + \frac{1}{S}.\]
Consequently, there is a constant $C_1 = C_1(K,\theta)$ such that
\begin{equation}
\label{eq:Mlower}
G_{\mathcal{M}}(c) \geq \log|c| - C_1 \quad \text{for all } c\in \C \setminus \mathcal{M}_{K,\theta}.
\end{equation}
\end{lemma}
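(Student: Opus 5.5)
The plan is to use the limit formula of Proposition \ref{prop:bottformula} and telescope it along the orbit, estimating one step at a time with the orbit control of Lemma \ref{lem:orbit}. Write $z_n = f_c^n(z)$. By Lemma \ref{lem:orbit}, every $z_n$ is nonzero with $|z_n| \geq S > 2$, and $z \in I(f_c)$. So $\log^+|z_n| = \log|z_n|$, and
\[ G_c(z) = \lim_{n\to\infty} 2^{-n}\log|z_n| = \log|z| + \sum_{k=0}^{\infty} 2^{-(k+1)} \log \frac{|z_{k+1}|}{|z_k|^2}, \]
where the series converges because the partial sums are exactly $2^{-n}\log|z_n| - \log|z|$. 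It therefore suffices to bound each ratio $|z_{k+1}|/|z_k|^2$ above and below, uniformly in $k$, and then use $\sum_{k\geq 0} 2^{-(k+1)} = 1$.

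For the lower bound, I will use \eqref{eq:Pescape1} together with $|c| \leq S \leq |z_k|$. This gives $|z_{k+1}| \geq |z_k|^2(1 - |c|/|z_k|^2) \geq |z_k|^2(1 - 1/S)$. Since $1/S < 1/2$, the elementary inequality $\log(1-x) \geq -2x$ for $x \in [0,1/2]$ yields $\log(|z_{k+1}|/|z_k|^2) \geq -2/S$. Summing with the weights $2^{-(k+1)}$ gives $G_c(z) \geq \log|z| - 2/S$.

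For the upper bound, \eqref{eq:hsv} gives $|h(w)| \leq K|w|$, so $|z_{k+1}| \leq K^2|z_k|^2 + |c|$. Hence
\[ \frac{|z_{k+1}|}{|z_k|^2} \leq K^2\left(1 + \frac{|c|}{K^2|z_k|^2}\right) \leq K^2\left(1 + \frac{1}{S}\right), \]
using $|c| \leq |z_k|$, $|z_k| \geq S$ and $K \geq 1$. Then $\log(1+x) \leq x$ gives a per-step bound of $2\log K + 1/S$, and summing gives $G_c(z) \leq \log|z| + 2\log K + 1/S$.

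For \eqref{eq:Mlower}, fix $\kappa = 3$ and take $C_1 = \log 3$. Let $c \notin \mathcal{M}_{K,\theta}$. If $|c| \geq 3$, apply the lower bound with $z = c$, which is allowed since $|c| \geq \max\{|c|, \kappa\}$. This gives $G_{\mathcal{M}}(c) = G_c(c) \geq \log|c| - 2/3 \geq \log|c| - C_1$. If $|c| < 3$, then $\log|c| - C_1 < 0 < G_{\mathcal{M}}(c)$, since $G_{\mathcal{M}}$ is strictly positive off $\mathcal{M}_{K,\theta}$. There is no real obstacle here; the only points needing care are keeping the per-step error uniform in $k$, which monotonicity from Lemma \ref{lem:orbit} provides, and justifying the telescoped limit, which Proposition \ref{prop:bottformula} provides.
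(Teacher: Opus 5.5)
Your proof is correct and follows essentially the same route as the paper. Both telescope the limit formula of Proposition \ref{prop:bottformula}, bound each step using $|z_n|^2-|c|\le|z_{n+1}|\le K^2|z_n|^2+|c|$ with error term $|c|/|z_n|^2\le 1/S$, and apply the same two logarithm inequalities. The only cosmetic difference is that for \eqref{eq:Mlower} you take $\kappa=3$ and $C_1=\log 3$, where the paper takes $\kappa_0$ and $C_1=\log\kappa_0$; either choice works.
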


\begin{proof}
Write $z_n = f_c^n(z)$ and $u_n = \log|z_n|$. By Lemma \ref{lem:orbit}, $|z_n| \geq S$ for all $n$ and $|c| \leq |z_n|$, so setting $\eta_n = |c|/|z_n|^2 \leq 1/|z_n| \leq 1/S < 1/2$, we have from \eqref{eq:Pescape1} and $|f_c(z)| \leq K^2|z|^2+|c|$ that
\[ 2u_n + \log ( 1-\eta_n) \leq u_{n+1} \leq 2u_n + 2\log K + \log (1+\eta_n).\]
Summing the telescoping identity $2^{-(n+1)}u_{n+1} - 2^{-n}u_n = 2^{-(n+1)}(u_{n+1}-2u_n)$ and applying Proposition \ref{prop:bottformula} gives
\[ G_c(z) - \log|z| = \sum_{n=0}^{\infty} 2^{-(n+1)} ( u_{n+1} - 2u_n).\]
As $(\eta_n)$ is non-increasing with $\eta_0 \leq 1/S$, we have $\sum_n 2^{-(n+1)}\eta_n \leq \eta_0 \leq 1/S$, and the bounds follow using $\log(1+x) \leq x$ and $\log(1-x) \geq -2x$ for $0\leq x \leq 1/2$.

For \eqref{eq:Mlower}, take $z=c$. If $|c| \geq \kappa_0$ then the hypothesis holds with $\kappa=\kappa_0$, recalling \eqref{eq:kappa0}, and $S=|c| \geq 4$, so $G_{\mathcal{M}}(c) \geq \log|c| - 1/2$. If $|c|<\kappa_0$ then $\log|c| - \log \kappa_0 < 0 \leq G_{\mathcal{M}}(c)$. So $C_1 = \log \kappa_0$ suffices, as $\log \kappa_0 \geq \log 4 > 1/2$.
\end{proof}

The Green's function of \cite{BF25} is continuous in $z$ for each fixed $c$. We will need continuity jointly in $(c,z)$, which we deduce from Proposition \ref{prop:bottformula} and the two-sided bounds above.

\begin{lemma}
\label{lem:Gcont}
The function $(c,z) \mapsto G_c(z)$ is continuous on $\C \times \C$. In particular, $G_{\mathcal{M}}$ is continuous on $\C$, the sets $\{G_{\mathcal{M}} > t\}$ are open and the sets $\{G_{\mathcal{M}} \geq t \}$ are closed.
\end{lemma}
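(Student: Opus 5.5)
We prove joint continuity of $(c,z)\mapsto G_c(z)$ by writing $G_c$ as a locally uniform limit of continuous functions, and treating escaping and non-escaping points separately.

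\textbf{Approximants.} Set
\[ u_n(c,z) := 2^{-n}\log\max\{|f_c^n(z)|,\kappa(c)\}, \qquad \kappa(c) := \max\{|c|,\kappa_0\}. \]
Each $u_n$ is continuous in $(c,z)$, since $(c,z)\mapsto f_c^n(z)$ is a polynomial in $x,y,\Re c,\Im c$. Fix a compact set $Q\subset\C\times\C$ and let $M := \sup_Q |c|$. Replacing $\kappa(c)$ by the constant $\kappa_M := \max\{M,\kappa_0\}$ on $Q$ changes $u_n$ by at most $2^{-n}\log\kappa_M$. So it suffices to work with
\[ v_n := 2^{-n}\log\max\{|f_c^n(z)|,\kappa_M\}. \]

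\textbf{Uniform Cauchy estimate.} We show $|v_{n+1}-v_n|\le C_Q 2^{-n}$ uniformly on $Q$. There are two cases.

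\emph{Case $|f_c^n(z)|\ge\kappa_M$.} Then $\kappa_M\ge|c|$ and $\kappa_M>2$, so Lemma \ref{lem:orbit} applies to $f_c^n(z)$. In particular the next iterate also has modulus at least $\kappa_M$. The two-sided bounds in the proof of Lemma \ref{lem:Glower} give
\[ |\log|f_c^{n+1}(z)|-2\log|f_c^n(z)|| \le 2\log K+1. \]
Hence $|v_{n+1}-v_n|\le 2^{-(n+1)}(2\log K+1)$.

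\emph{Case $|f_c^n(z)|<\kappa_M$.} Then $|f_c^{n+1}(z)|\le K^2\kappa_M^2+M$. Both $v_n$ and $v_{n+1}$ are squeezed between $2^{-(n+1)}\log\kappa_M$ and $2^{-(n+1)}\log(K^2\kappa_M^2+M)$ (up to a factor of $2$), so $|v_{n+1}-v_n|=O_Q(2^{-n})$.

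Therefore $v_n$ converges uniformly on $Q$ to a continuous limit.

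\textbf{Identifying the limit.} By Proposition \ref{prop:bottformula}, the limit equals $G_c(z)$. Indeed, if $z\in I(f_c)$ then $|f_c^n(z)|\to\infty$, and the cutoff $\kappa_M$ disappears in the limit. If $z\in BO(f_c)$ the orbit is bounded, so both limits are $0$; note that Lemma \ref{lem:orbit} shows the orbit never exceeds $\kappa_M$ in this case. Since $Q$ was an arbitrary compact set, $(c,z)\mapsto G_c(z)$ is continuous on $\C\times\C$.

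\textbf{Consequences.} $G_{\mathcal{M}}(c)=G_c(c)$ is the composition of this continuous function with $c\mapsto(c,c)$, so it is continuous. Its superlevel sets $\{G_{\mathcal M}>t\}$ are then open and $\{G_{\mathcal M}\ge t\}$ are closed.

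\textbf{Main obstacle.} The delicate point is the case split in the Cauchy estimate. One needs Lemma \ref{lem:orbit} to guarantee that once the orbit exceeds $\kappa_M$ it stays above $\kappa_M$, so that the logarithmic increments are uniformly controlled. This is why $\kappa_M$ must dominate both $|c|$ and $\kappa_0$ uniformly on $Q$.
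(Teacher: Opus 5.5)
Your proof is correct, but it takes a different route from the paper's.

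\textbf{The paper's route} is pointwise and uses two separate mechanisms.
\begin{enumerate}
\item It applies Corollary~\ref{cor:bott} along the orbit to write $G$ as an infimum of continuous upper bounds. This makes $G$ upper semicontinuous, and since $G \geq 0$, it gives continuity at every $(c,z)$ with $z\in BO(f_c)$.
\item At an escaping point it fixes one time $N$ with $|f_c^N(z)|>\max\{|c|,3\}$, which is an open condition. It then applies Lemma~\ref{lem:Glower} at $f_{c'}^N(z')$, so that $G_{c'}(z')$ is within $2^{-N}(2\log K+1)$ of the continuous function $2^{-N}\log|f_{c'}^N(z')|$ on a neighbourhood.
\end{enumerate}

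\textbf{Your route} instead fixes a truncation level $\kappa_M$ on the whole compact set $Q$. You prove the telescoping bound $|v_{n+1}-v_n|\le C_Q2^{-n}$, so a single sequence of continuous functions converges uniformly on $Q$. Bounded and escaping orbits are handled at once, and Proposition~\ref{prop:bottformula} is needed only to identify the limit.

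\textbf{What each buys.} Yours is the classical polynomial-dynamics argument, and it gives more than continuity. Since $v_n$ differs from $2^{-n}\log^+|f_c^n(z)|$ by at most $2^{-n}\log\kappa_M$, you get that $2^{-n}\log^+|f_c^n(z)|\to G_c(z)$ locally uniformly in $(c,z)$, at rate $O_Q(2^{-n})$, with no semicontinuity step. The paper's version needs no uniform cutoff on a compact set, but yields only continuity.

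\textbf{Minor remarks.}
\begin{itemize}
\item The preliminary $u_n$ built with $\kappa(c)$ is unnecessary; you can start directly from $v_n$.
\item In Case~1 you need only the one-step consequence of Lemma~\ref{lem:orbit} that $|f_c^{n+1}(z)|\ge\kappa_M$, not the full forward invariance.
\end{itemize}
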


\begin{proof}
Applying Corollary \ref{cor:bott} at the point $f_c^n(z)$ and using $G_c(f_c^n(z)) = 2^nG_c(z)$, we obtain, for every $n\geq 0$,
\[ G_c(z) \leq 2^{-n} \left ( \log^+ |f_c^n(z)| + \log^+|c| + \log (K^2+1) \right ) =: u_n(c,z).\]
Each $u_n$ is continuous on $\C\times \C$, so $G = \inf_n u_n$ is upper semicontinuous. If $z\in BO(f_c)$, then $(f_c^n(z))_n$ is bounded, so $u_n(c,z) \to 0 = G_c(z)$; given $\epsilon>0$ choose $n$ with $u_n(c,z)<\epsilon$, and then $0 \leq G_{c'}(z') \leq u_n(c',z') <\epsilon$ for $(c',z')$ near $(c,z)$. Hence $G$ is continuous at every point $(c,z)$ with $z \in BO(f_c)$.

If instead $z \in I(f_c)$, choose $N$ with $|f_c^N(z)| > \max \{ |c| , 3\}$. This is an open condition, so there is a neighbourhood $V$ of $(c,z)$ on which $|f_{c'}^N(z')| \geq \max\{|c'|,3\}$, and then Lemma \ref{lem:Glower} with $\kappa=3$, applied to $w = f_{c'}^N(z')$, gives $|G_{c'}(w) - \log|w|| \leq 2\log K + 1$. Dividing by $2^N$,
\[ \left | G_{c'}(z') - 2^{-N} \log |f_{c'}^N(z')| \right | \leq 2^{-N}(2\log K+1) \quad \text{on } V.\]
As $(c',z') \mapsto 2^{-N}\log|f_{c'}^N(z')|$ is continuous on $V$, and $N$ may be taken as large as we please, $G$ is continuous at $(c,z)$.
\end{proof}

Recall $t_0$ and $t_*(r)$ from \eqref{eq:t0} and \eqref{eq:tstar} respectively.

\begin{lemma}
\label{lem:tstar}
For every $r\geq 2$ we have $t_*(r) \leq \log r + \log (K^2+1) < \infty$. Consequently $t_0 \leq \log 2 + \log(K^2+1)$.
\end{lemma}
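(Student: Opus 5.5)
The plan is to show directly that for $r\geq 2$ and every $s \geq \log r + \log(K^2+1)$, the set $\{G_{\mathcal{M}} > s\}$ is contained in $\Omega_r$. Fix $c$ with $G_{\mathcal{M}}(c) > s$. Since $G_{\mathcal{M}}(c) >0$, we have $c \notin \mathcal{M}_{K,\theta}$, so it remains to check that $|P_i(c)| \geq r$ for all $i \geq 0$.

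\textbf{Step 1 ($i=0$).} Apply Corollary \ref{cor:bott} with $z=c$:
\[ G_{\mathcal{M}}(c) \leq \log\max(|c|,1) + \log(K^2+1). \]
Combined with $G_{\mathcal{M}}(c) > \log r + \log(K^2+1)$, this gives $\max(|c|,1) > r \geq 2$. Hence $|c| > r$.

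\textbf{Step 2 ($i\geq 1$).} Here I would avoid applying Corollary \ref{cor:bott} to each $P_i(c)$ separately. That route gives $2^{i}G_{\mathcal{M}}(c) = G_c(P_{i-1}(c)) \leq \log\max(|P_{i-1}(c)|,|c|,1)+\log(K^2+1)$, but the $|c|$ term in the maximum spoils the bound. Instead I would use the orbit growth directly. We have $|P_0(c)| = |c| \geq \max\{|c|, \kappa\}$ with $\kappa = r$ (taking $\kappa$ slightly below $r$ if $r = 2$, which is allowed since $|c|>r$). Corollary \ref{lem:Pescape} with $m=0$ then says $(|P_n(c)|)_{n\geq 0}$ is non-decreasing, so $|P_n(c)| \geq |c| > r$ for all $n$. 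If the strict inequality $\kappa>2$ causes trouble at $r=2$, I would use $\kappa = (2+|c|)/2 \in (2,|c|)$.

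\textbf{Step 3.} Steps 1 and 2 show that for every $s \geq t:= \log r + \log(K^2+1)$ we have $\{G_{\mathcal{M}}>s\}\subset \Omega_r$. So $t$ belongs to the set whose infimum defines $t_*(r)$, and therefore $t_*(r)\leq t$. The bound on $t_0$ follows from $\Omega_2\subset\Omega^*$: for $s \geq \log 2 + \log(K^2+1)$ we get $\{G_{\mathcal{M}}>s\}\subset \Omega_2 \subset\Omega^*$, hence $t_0 \leq \log 2+\log(K^2+1)$.

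The only delicate point is the boundary case $r=2$ with the hypothesis $\kappa>2$, and it is handled by the strict inequality $|c|>r$ obtained in Step 1.
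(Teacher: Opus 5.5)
Your proof is correct and follows the paper's argument. Corollary \ref{cor:bott} forces $|c|>r\geq 2$, and monotonicity of $|P_i(c)|$ then gives $c\in\Omega_r$; the paper derives this monotonicity inline from \eqref{eq:Pescape1}, while you get it from Corollary \ref{lem:Pescape}, and $\Omega_2\subset\Omega^*$ gives the bound on $t_0$. One small slip: at $r=2$ you need $\kappa$ slightly \emph{above} $2$, not below (for instance $\kappa=|c|$ works), but your fallback choice $\kappa=(2+|c|)/2$ already fixes this.
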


\begin{proof}
By Corollary \ref{cor:bott}, $G_{\mathcal{M}}(c) = G_c(c) \leq \log \max\{|c|,1\} + \log(K^2+1)$, so $\{ G_{\mathcal{M}} > t \} \subset \{ |c| > e^{t - \log(K^2+1)} \}$ whenever $t \geq \log(K^2+1)$. If $e^{t-\log(K^2+1)} \geq r$, then any $c$ with $G_{\mathcal{M}}(c)>t$ satisfies $|c| > r \geq 2$, and then \eqref{eq:Pescape1} gives inductively $|P_{i+1}(c)| \geq |P_i(c)|^2 - |c| \geq |P_i(c)|(|P_i(c)|-1) \geq |P_i(c)|$ whenever $|P_i(c)| \geq \max\{|c|,2\}$, so $|P_i(c)| \geq |c| > r$ for all $i$ and $c\in \Omega_r$. (The restriction $r \geq 2$ is needed at this step: for $K=1$, the parameter $c = -1.4+0.5i$ escapes and has $|c| \approx 1.487$, but $|P_1(c)| \approx 0.952$.) For the last assertion, if $t>\log 2 + \log(K^2+1)$ then $r:= e^{t-\log(K^2+1)}>2$ and the above shows $\{G_{\mathcal{M}}>s\} \subset \Omega_r \subset \Omega^*$ for all $s\geq t$, so $t_0 \leq t$.
\end{proof}

\subsection{The terms that control the orientation of $P_n$}
\label{sec:orientation}

Differentiating the formulas $P_0(c)=c$ and $P_{n+1}(c) = h(P_n(c))^2 + c$ gives $D_cP_0 = I$ and
\begin{equation}
\label{eq:DcPrec}
D_c P_{n+1} = (A_nh)D_cP_n + I,
\end{equation}
where $A_n$ denotes multiplication by $\alpha_n := 2h(P_n(c))$.

If we set $\Pi_0 = I$ and, for $j\geq 1$,
\[ \Pi_j = (A_{j-1}h)\ldots (A_0h),\]
so that the factors are ordered with decreasing index from left to right, then $\Pi_j$ is invertible provided $P_i(c) \neq 0$ for $i<j$. We assume for the rest of this subsection that $P_i(c) \neq 0$ for all $i\geq 0$. This holds for every $c\notin \mathcal{M}_{K,\theta}$, since $P_i(c) = 0$ would make $0$ periodic under $f_c$. Iterating \eqref{eq:DcPrec} and using $\prod_{i=j}^{n-1}(A_ih) = \Pi_n \Pi_j^{-1}$ gives
\begin{equation}
\label{eq:DcPn}
D_cP_n = \Pi_nR_n, \quad R_n = I +\sum_{j=1}^n \Pi_j^{-1}.
\end{equation}

\begin{lemma}
\label{lem:alg1}
We have $\det \Pi_n > 0$ for every $n$, and so
\[ \sign \det D_c P_n = \sign \det R_n.\]
\end{lemma}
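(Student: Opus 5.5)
The plan is to compute determinants factor by factor, using multiplicativity of the determinant for real-linear maps of $\C \cong \R^2$. By definition, $\Pi_n$ is a product of the factors $A_i h$ for $0\leq i\leq n-1$, and $\Pi_0 = I$ has determinant $1$. So
\[ \det \Pi_n = \prod_{i=0}^{n-1} \det(A_i)\,\det(h), \]
and it suffices to show that each factor $\det(A_i)\det(h)$ is strictly positive.

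First, by \eqref{eq:hsv} we have $\det h = K > 0$; indeed $h$ has singular values $K$ and $1$ and preserves orientation, since $|\mu_h| = (K-1)/(K+1) < 1$.

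Second, $A_i$ is complex multiplication by $\alpha_i = 2h(P_i(c))$. Viewed as a real-linear map of $\R^2$, multiplication by a complex number $\alpha$ has matrix
\[ \begin{pmatrix} \Re \alpha & -\Im \alpha \\ \Im \alpha & \Re\alpha \end{pmatrix}, \]
whose determinant is $|\alpha|^2$. Under the standing assumption $P_i(c)\neq 0$, injectivity of $h$ gives $h(P_i(c))\neq 0$, so $\det A_i = 4|h(P_i(c))|^2 > 0$. Combining the two steps,
\[ \det \Pi_n = \prod_{i=0}^{n-1} 4K|h(P_i(c))|^2 > 0. \]

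Finally, \eqref{eq:DcPn} gives $D_cP_n = \Pi_n R_n$, so $\det D_cP_n = \det \Pi_n \cdot \det R_n$. Since $\det \Pi_n > 0$, the signs of $\det D_cP_n$ and $\det R_n$ agree, including the case where both vanish.

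This is entirely routine. The only points needing care are that determinants are taken over $\R$, not $\C$, so that $\det A_i = |\alpha_i|^2$ rather than $\alpha_i$, and that $h$ is orientation preserving.
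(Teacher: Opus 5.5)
Your proposal is correct and is essentially the paper's proof: the paper likewise notes that $\det(A_jh) = |\alpha_j|^2\det h = 4|h(P_j(c))|^2K > 0$, multiplies these factors to get $\det\Pi_n > 0$, and concludes from $D_cP_n = \Pi_nR_n$ in \eqref{eq:DcPn}. Your extra remarks, that determinants are real and that $h(P_i(c)) \neq 0$ follows from $P_i(c) \neq 0$, just spell out what the paper leaves implicit.
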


\begin{proof}
This follows directly from $\det (A_jh) = |\alpha_j|^2 \det h = 4|h(P_j(c))|^2K >0$ and \eqref{eq:DcPn}.
\end{proof}

The aim of the rest of this subsection is to show that if we have some control on the orbit $P_n(c)$, then this implies $R_n$ is close to the identity. 

\begin{lemma}
\label{lem:alg2}
For every $c$ with $P_i(c) \neq 0$ for all $i$, and every $j\geq 1$,
\begin{equation}
\label{eq:Piinv}
|| \Pi_j^{-1} || = \ell(\Pi_j)^{-1} \leq \prod_{i=0}^{j-1} \frac{1}{2|h(P_i(c))|} \leq \prod_{i=0}^{j-1} \frac{1}{2|P_i(c)|},
\end{equation}
so that $||R_n - I|| \leq \sigma_n(c)$ for every $n$, with $\sigma_n$ as in Definition \ref{def:sigma}. If $c\notin \mathcal{M}_{K,\theta}$, then $\sigma(c) <\infty$ and the sequence $(R_n)_{n=0}^{\infty}$ converges to a limit $R_{\infty}$ with $||R_\infty - I|| \leq \sigma(c)$.
\end{lemma}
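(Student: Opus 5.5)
The plan has three parts: the norm identity and the product bound in \eqref{eq:Piinv}, the bound $||R_n - I|| \leq \sigma_n(c)$, and finiteness of $\sigma(c)$ with convergence of $R_n$.

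\textbf{The product bound.} For an invertible real-linear map $A$, the identity $||A^{-1}|| = \ell(A)^{-1}$ is immediate from the definitions. Indeed, $\min_{|z|=1}|A z| = \ell(A)$, so $|A^{-1}w| \leq |w|/\ell(A)$, with equality at a minimizing direction. Supermultiplicativity of $\ell$ then gives
\[ \ell(\Pi_j) \geq \prod_{i=0}^{j-1} \ell(A_i h) \geq \prod_{i=0}^{j-1} \ell(A_i)\,\ell(h) = \prod_{i=0}^{j-1} 2|h(P_i(c))|, \]
using $\ell(A_i) = |\alpha_i| = 2|h(P_i(c))|$ and $\ell(h) = 1$ from \eqref{eq:hsv}. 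The final inequality in \eqref{eq:Piinv} follows from $|h(w)| \geq |w|$. The triangle inequality applied to \eqref{eq:DcPn} then yields $||R_n - I|| \leq \sum_{j=1}^n ||\Pi_j^{-1}|| \leq \sigma_n(c)$.

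\textbf{Finiteness of $\sigma(c)$.} Let $c \notin \mathcal{M}_{K,\theta}$. Then $P_n(c) \to \infty$, so there is an $m$ with $|P_m(c)| \geq \max\{|c|, \kappa_0\}$, where $\kappa_0 \geq 4$. By Corollary \ref{lem:Pescape}, $|P_n(c)| \geq |P_m(c)| \geq 4$ for all $n \geq m$, so each factor $(2|h(P_i(c))|)^{-1}$ with $i \geq m$ is at most $1/8$. The finitely many earlier factors are finite because $P_i(c) \neq 0$. Hence the $j$-th term of $\sigma$ is bounded by $C_m 8^{-(j-m)}$ for $j > m$, where $C_m = \prod_{i<m}(2|h(P_i(c))|)^{-1}$. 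The series is dominated by a geometric series, so $\sigma(c) < \infty$.

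\textbf{Convergence of $R_n$.} The series $\sum_j \Pi_j^{-1}$ converges absolutely in operator norm, so $R_n \to R_\infty$. Passing to the limit in $||R_n - I|| \leq \sigma_n(c) \leq \sigma(c)$ gives $||R_\infty - I|| \leq \sigma(c)$.

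I do not expect a serious obstacle. The one step needing care is getting past the initial segment of the orbit that may sit near $\mathcal{M}_{K,\theta}$: one must check that the escape hypothesis of Corollary \ref{lem:Pescape} is eventually met, and this is exactly what $P_n(c) \to \infty$ guarantees.
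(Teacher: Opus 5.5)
Your proposal is correct and follows essentially the same route as the paper: supermultiplicativity of $\ell$ with $\ell(A_ih)=2|h(P_i(c))|$, the triangle inequality on \eqref{eq:DcPn}, and domination by a geometric series once the orbit is large. The only difference is that you invoke Corollary~\ref{lem:Pescape} for the eventual lower bound. The paper reads it directly off $|P_n(c)|\to\infty$, which already gives $|P_i(c)|\geq 2$ for all large $i$, so your detour is harmless but not needed.
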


\begin{proof}
Each factor of $\Pi_j$ satisfies $\ell(A_ih) = |\alpha_i|\,\ell(h) = 2|h(P_i(c))|$ by \eqref{eq:hsv}, since $A_i$ is a similarity, and $\ell$ is supermultiplicative; this gives \eqref{eq:Piinv}, the last inequality being $|h(w)| \geq |w|$. Summing over $j$ gives $||R_n-I|| \leq \sigma_n(c)$.
If $c\notin \mathcal{M}_{K,\theta}$, then $|P_n(c)| \to \infty$, so there is $N$ with $|P_i(c)| \geq 2$ for all $i \geq N$. Hence for $j>N$ the products in \eqref{eq:Piinv} decay at least geometrically with ratio $1/4$, so $\sigma(c)<\infty$ and the series defining $R_n$ converges absolutely.
\end{proof}

\begin{lemma}
\label{lem:Rnclose}
Suppose that $c\in \C$ and $n\geq 1$ are such that $\sigma_n(c) <1$. Then the matrix $R_n$ satisfies
\[ ||R_n - I || \leq \sigma_n(c) <1,\]
so $R_n$ is invertible, $\det R_n \geq (1-\sigma_n(c))^2 >0$, and the linear dilatation satisfies
\[ \mathcal{H}(R_n) \leq \frac{ 1 + \sigma_n(c)}{1-\sigma_n(c)}.\]
If $c\notin \mathcal{M}_{K,\theta}$ and $\sigma(c)<1$, the same bounds hold for $R_{\infty} = \lim R_n$ with $\sigma(c)$ in place of $\sigma_n(c)$. Consequently, $\det D_cP_n > 0$ for all $n\geq 1$ whenever $\sigma(c)<1$.

In particular, if $\kappa := \inf_{i\geq 0} |P_i(c)| >1$, then $\sigma(c) \leq (2\kappa-1)^{-1} <1$, and the conclusions hold with
\[ ||R_n - I || \leq \frac{1}{2\kappa - 1},\qquad \mathcal{H}(R_n) \leq \frac{ 1 + (2\kappa-1)^{-1}}{1-(2\kappa-1)^{-1}} = \frac{\kappa}{\kappa-1}.\]
\end{lemma}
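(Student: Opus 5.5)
The plan is to reduce everything to a perturbation-of-the-identity statement for $2\times 2$ real matrices, using Lemma \ref{lem:alg2} as the only dynamical input. By \eqref{eq:DcPn} and Lemma \ref{lem:alg2}, $E_n := R_n - I$ satisfies $||E_n|| \leq \sigma_n(c)$, which gives the first assertion directly. Write $\delta := ||E_n|| \leq \sigma_n(c) < 1$.

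\emph{Invertibility and dilatation.} For any unit vector $v$ we have
\[ 1-\delta \leq |R_n v| \leq 1+\delta. \]
Hence $\ell(R_n) \geq 1-\delta > 0$, so $R_n$ is invertible. We also get $||R_n|| \leq 1+\delta$, and therefore
\[ \mathcal{H}(R_n) \leq \frac{1+\delta}{1-\delta} \leq \frac{1+\sigma_n(c)}{1-\sigma_n(c)}, \]
since $x\mapsto (1+x)/(1-x)$ is increasing on $[0,1)$.

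\emph{Determinant.} For a real $2\times 2$ matrix, $|\det R_n|$ is the product of the singular values, so
\[ |\det R_n| = \ell(R_n)\,||R_n|| \geq \ell(R_n)^2 \geq (1-\delta)^2. \]
To fix the sign, I would use a homotopy argument. Consider $R(s) = I + sE_n$ for $s\in[0,1]$. Each $R(s)$ is invertible by the estimate above, since $||sE_n|| < 1$. The function $s\mapsto \det R(s)$ is continuous, never zero, and equals $1$ at $s=0$, so $\det R_n > 0$. This gives $\det R_n \geq (1-\sigma_n(c))^2$. This sign step is the only non-routine point. The alternative is an explicit expansion $\det(I+E) = 1 + \operatorname{tr}E + \det E$ with bounds on the trace, but the homotopy is cleaner.

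\emph{Limit $n\to\infty$ and sign of $\det D_cP_n$.} If $c\notin\mathcal{M}_{K,\theta}$, Lemma \ref{lem:alg2} gives $R_n\to R_\infty$ with $||R_\infty - I|| \leq \sigma(c)$. The same argument then applies verbatim to $R_\infty$ with $\sigma(c)$ in place of $\sigma_n(c)$. Since the terms of $\sigma_n$ are positive, $\sigma_n(c) \leq \sigma(c) < 1$ for every $n$. So $\det R_n > 0$, and Lemma \ref{lem:alg1} gives $\det D_cP_n > 0$ for all $n\geq1$. Note that if $\sigma(c)<1$ then no $P_i(c)$ vanishes, by the convention in Definition \ref{def:sigma}, so Lemmas \ref{lem:alg1} and \ref{lem:alg2} apply.

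\emph{The case $\kappa := \inf_{i\geq 0}|P_i(c)| > 1$.} By $|h(w)|\geq|w|$, each factor of each product in $\sigma$ is at most $(2\kappa)^{-1}$. Summing the geometric series gives
\[ \sigma(c) \leq \sum_{j\geq1}(2\kappa)^{-j} = (2\kappa-1)^{-1} < 1, \]
and the same bound holds for each $\sigma_n(c)$. Substituting $x = (2\kappa-1)^{-1}$ into $(1+x)/(1-x)$ gives
\[ \frac{2\kappa}{2\kappa-2} = \frac{\kappa}{\kappa-1}, \]
completing the proof.
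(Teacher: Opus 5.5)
Your proof is correct and follows the paper's argument essentially step for step. You take the bound $||R_n - I|| \leq \sigma_n(c)$ from Lemma \ref{lem:alg2} and place the singular values in $[1-\sigma_n(c), 1+\sigma_n(c)]$; you get this directly from the triangle inequality, where the paper cites Weyl's inequality. You then use the straight-line homotopy $I + t(R_n - I)$ to fix the sign of the determinant, and the geometric series $\sum_{j\geq 1}(2\kappa)^{-j}$ for the case $\inf_i |P_i(c)| > 1$, exactly as the paper does.
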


\begin{proof}
The bound $||R_n - I|| \leq \sigma_n(c)$ is Lemma \ref{lem:alg2}. Writing $\epsilon = \sigma_n(c) <1$, Weyl's inequality implies both singular values of $R_n$ lie in $[1-\epsilon, 1+\epsilon]$. This gives invertibility, $|\det R_n| \geq (1-\epsilon)^2$ and the claimed linear dilatation. Considering the path $t\mapsto I +t(R_n-I)$ for $t\in [0,1]$, we have $\ell(I+t(R_n-I)) \geq 1-\epsilon>0$, so $\det$ cannot vanish along the path and hence $\det R_n$ has the same sign as $\det I$. The statements for $R_\infty$ follow in the same way from $||R_\infty - I|| \leq \sigma(c)$, and $\sigma_n \leq \sigma$ gives $\det D_cP_n = \det \Pi_n \cdot \det R_n>0$ for every $n$, using Lemma \ref{lem:alg1}. Finally, if $|P_i(c)| \geq \kappa >1$ for all $i$, then \eqref{eq:Piinv} gives $||\Pi_j^{-1}|| \leq (2\kappa)^{-j}$ and $\sigma(c) \leq \sum_{j\geq 1}(2\kappa)^{-j} = (2\kappa-1)^{-1}$.
\end{proof}

\subsection{Gap between the parameter and dynamical sides}
\label{sec:gap}

Let $c \notin \mathcal{M}_{K,\theta}$. By Theorem \ref{thm:bottcher} and the functional equation for the extended B\"ottcher coordinate (see Section 1), we have $f_0^n \circ \psi_c = \psi_c \circ f_c^n$ on the extended domain of $\psi_c$, and evaluating at $z=c$ gives
\[ f_0^n(\Psi(c)) = \psi_c(P_n(c)) =: Q_n(c) \]
for every $n$ such that $P_n(c)$ lies in the domain of $\psi_c$, in particular for every $n$ with $(c,P_n(c)) \in \mathcal{D}$, in which case $\psi_c$ is the unextended coordinate. We show in Proposition \ref{prop:derivative} below that $\Psi$ is $C^1$ and that
\begin{equation}
\label{eq:DCPsi}
D_c\Psi = [ Df_0^n (\Psi(c)) ]^{-1} D_c Q_n
\end{equation}
for all large $n$. In this subsection we give the estimates needed to analyse the right hand side.

On the parameter side, $\Pi_n = \prod_{j=0}^{n-1} (A_j h)$, where $A_j$ is multiplication by $\alpha_j = 2h(P_j(c))$. If we set $\phi_j = \arg \alpha_j$ and denote by $\Rot_{\phi}$ the rotation by angle $\phi$, then since multiplication by $\alpha_j$ equals $|\alpha_j|\Rot_{\phi_j}$ and we can move positive scalars around, we may write
\[ \Pi_n = \left ( \prod_{j=0}^{n-1} |\alpha_j| \right ) Y_n,\quad Y_n:= \prod_{j=0}^{n-1} ( \Rot_{\phi_j} h) .\]
This term appears within the $D_cQ_n$ term in \eqref{eq:DCPsi}.

On the dynamical side, we have $Df_0^n (\Psi(c)) = \prod_{j=0}^{n-1} (B_jh)$ where $B_j$ is multiplication by $\beta_j = 2h(w_j)$, with $w_j = f_0^j(\Psi(c))$. Note that $w_j$ is defined for all $j\geq 0$ as the $f_0$-orbit of $\Psi(c)$, and that $w_j = \psi_c(P_j(c))$ wherever $\psi_c$ is defined; moreover $w_j \neq 0$, since $\Psi(c) \in I(f_0)$ and $0\in BO(f_0)$. Setting $\chi_j = \arg \beta_j$, we likewise have
\[ Df_0^n (\Psi(c)) = \left ( \prod_{j=0}^{n-1} |\beta_j| \right ) \tilde Y_n,\quad \tilde Y_n := \prod_{j=0}^{n-1} (\Rot_{\chi_j} h).\]
This term will appear when we compute the $[ Df_0^n (\Psi(c)) ]^{-1}$ term in \eqref{eq:DCPsi}. The point is that $Y_n$ and $\tilde Y_n$ are identical in form and differ only in the rotation angles. We therefore set
\[ \delta_j = \phi_j - \chi_j, \quad \epsilon_j = |\delta_j|,\]
where $\delta_j$ is taken to be the principal value, so that we always have $\epsilon_j \leq \pi$. The following lemma gives the estimate we use once the orbit is large.

\begin{lemma}
\label{lem:epsilon}
Suppose $|P_j(c)| \geq \max \{ |c| , \kappa_0 \}$, recalling $\kappa_0$ from \eqref{eq:kappa0}. Then $|w_j| \geq |P_j(c)|/2$ and
\[ \epsilon_j \leq \frac{2C'K|c|}{|P_j(c)|^2},\]
where $C'$ is the constant from Proposition \ref{prop:bottreg}~(d).
\end{lemma}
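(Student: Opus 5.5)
The plan is to read everything off Proposition \ref{prop:bottreg}~(d) applied at the point $P_j(c)$. First I will check that $(c,P_j(c)) \in \mathcal{D}$, so that $\psi_c(P_j(c))$ is the unextended coordinate and $w_j = \psi_c(P_j(c))$. Since $|P_j(c)| \geq \kappa_0 \geq a^2+1$ and $|P_j(c)|\geq |c|$, we get $|P_j(c)|^2 \geq (a^2+1)|P_j(c)| \geq a^2 + a^2|c|$, using $|P_j(c)|\geq 1$. Hence $|P_j(c)| \geq a\sqrt{1+|c|}$, which is exactly the membership condition for $\mathcal{D}$.

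For the modulus bound, Proposition \ref{prop:bottreg}~(d) gives
\[ |w_j - P_j(c)| \leq C'|c|/|P_j(c)| \leq C'. \]
Since $|P_j(c)| \geq \kappa_0 \geq 2KC' \geq 2C'$ (as $K>1$), this error is at most $|P_j(c)|/2$, and therefore $|w_j| \geq |P_j(c)|/2$.

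For the angle, the squaring hidden in $\alpha_j = 2h(P_j(c))$ plays no role: $\phi_j = \arg h(P_j(c))$ and $\chi_j = \arg h(w_j)$, so $\epsilon_j$ is the angle between $h(P_j(c))$ and $h(w_j)$. By linearity,
\[ |h(w_j) - h(P_j(c))| \leq \|h\|\,|w_j - P_j(c)| \leq KC'|c|/|P_j(c)|, \]
while $|h(P_j(c))| \geq |P_j(c)|$ by \eqref{eq:hsv}. I will then use the elementary fact that if $|v-u| \leq \eta|u|$ with $\eta\leq 1/2$, the principal angle between $u$ and $v$ is at most $\arcsin \eta$, which is at most $(\pi/3)\eta$ on that range. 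A cruder version suffices: the angle is at most $2\eta$ whenever $\eta \leq 1/2$. Here
\[ \eta = KC'|c|/|P_j(c)|^2 \leq KC'/|P_j(c)| \leq 1/2, \]
using $|c| \leq |P_j(c)|$ and $|P_j(c)| \geq 2KC'$. This yields $\epsilon_j \leq 2C'K|c|/|P_j(c)|^2$ as claimed.

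The only mildly delicate point is making sure the smallness hypotheses ($\eta\le 1/2$ and the $\mathcal{D}$ condition) are supplied by the specific entries of $\kappa_0$ in \eqref{eq:kappa0}. The entry $2KC'$ handles both the modulus bound and the angle step, and the entry $a^2+1$ gives membership in $\mathcal{D}$. I will also need to confirm that the principal value $\delta_j$ really measures the geometric angle between the two vectors, which holds because that angle is below $\pi$.
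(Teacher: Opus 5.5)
Your proposal is correct and follows the paper's proof essentially step for step: you get membership in $\mathcal{D}$ from the entry $a^2+1$ of $\kappa_0$, the modulus bound from (d) together with the entry $2KC'$, and the angle bound from $|h(w_j)-h(P_j(c))|\le KC'|c|/|P_j(c)|\le |h(P_j(c))|/2$. The paper bounds the resulting angle via $|\arg(1+\omega)|\le|\log(1+\omega)|\le 2|\omega|$, which is interchangeable with your $\arcsin\eta\le 2\eta$. One small correction: in the $\mathcal{D}$ step, the inequality $(a^2+1)|P_j(c)|\ge a^2+a^2|c|$ needs $|P_j(c)|\ge a^2$ (together with $|P_j(c)|\ge|c|$), not merely $|P_j(c)|\ge 1$; this is supplied by $|P_j(c)|\ge\kappa_0\ge a^2+1$, so the argument stands.
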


\begin{proof}
We first check that $(c,P_j(c)) \in \mathcal{D}$, so that $w_j = \psi_c(P_j(c))$ with $\psi_c$ its original, unextended, B\"ottcher coordinate, and Proposition \ref{prop:bottreg} applies at $P_j(c)$: by the fourth requirement in \eqref{eq:kappa0}, $|P_j(c)|^2 \geq (a^2+1)|P_j(c)| \geq a^2|c| + a^2 = a^2(1+|c|)$. Next, since $|P_j(c)| \geq |c|$ we have $|P_j(c)|^2 \geq \kappa_0 |c| \geq 2KC'|c|$, using the third requirement in \eqref{eq:kappa0}. Hence 
\begin{equation}
\label{eq:epsilon1}
C'|c|/|P_j(c)| \leq |P_j(c)|/(2K) \leq |P_j(c)|/2,
\end{equation}
and Proposition \ref{prop:bottreg}~(d) gives
\[ |w_j| \geq |P_j(c)| - C'|c|/|P_j(c)| \geq |P_j(c)|/2 .\]
Next, by \eqref{eq:hsv} and \eqref{eq:epsilon1},
\[ |h(w_j) - h(P_j(c))| \leq K |w_j - P_j(c)| \leq \frac{ KC'|c|}{|P_j(c)|} \leq \frac{|P_j(c)|}{2} \leq \frac{|h(P_j(c))|}{2}.\]
Writing $h(w_j)/h(P_j(c)) = 1+\omega$, we therefore have $|\omega| \leq 1/2$, and so
\[ \epsilon_j = |\arg (1+\omega) | \leq |\log (1+\omega)| \leq 2|\omega| \leq \frac{2C'K|c|}{|P_j(c)|^2},\]
as claimed.
\end{proof}

We now set $S_n = \tilde Y_n^{-1} Y_n$, and, writing $g_j = \Rot_{\phi_j}h$ and $g_j' = \Rot_{\chi_j}h$ so that $Y_n = g_{n-1}\ldots g_0$ and $\tilde Y_n = g_{n-1}'\ldots g_0'$,
\[ v_j := (g_j')^{-1}g_j = h^{-1} \Rot_{\delta_j} h, \qquad \mathcal{E}_j:= Y_j^{-1}(v_j-I)Y_j .\]
Since $S_n = \tilde Y_{n-1}^{-1} v_{n-1} Y_{n-1}$, we have
\[ S_n - S_{n-1} = \tilde{Y}_{n-1}^{-1} ( v_{n-1} - I) Y_{n-1} = S_{n-1} \mathcal{E}_{n-1},\]
and so $S_n = S_{n-1} (I+\mathcal{E}_{n-1})$. As $S_0 = I$, this leads to the formula
\[ S_n = \prod_{j=0}^{n-1} (I+\mathcal{E}_j),\]
where here the factors are ordered with increasing index from left to right. If we now set
\begin{equation}
\label{eq:s}
s:= \sum_{j=0}^{\infty} ||\mathcal{E}_j||,
\end{equation}
we have the following result. We emphasize that no smallness of $s$ is assumed. This is essential for Theorem \ref{thm:potential}, where the individual $||\mathcal{E}_j||$ may exceed $1$.

\begin{lemma}
\label{lem:Snestimate}
With the notation above, we have
\[ ||\mathcal{E}_j|| \leq K^{j+1}\epsilon_j \]
for every $j\geq 0$, and for all $n\in \N$,
\begin{equation}
\label{eq:Sn1} 
\mathcal{H}(S_n) \leq e^{2s}, \qquad ||S_n - I|| \leq \prod_{j=0}^{n-1} ( 1 + ||\mathcal{E}_j ||) - 1 \leq e^s-1.
\end{equation}
\end{lemma}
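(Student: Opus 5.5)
The bound on $\|\mathcal{E}_j\|$ comes from the conjugation structure. Then the estimates on $S_n$ come from the product formula $S_n = \prod_{j=0}^{n-1}(I+\mathcal{E}_j)$.

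For the first step, write $\mathcal{E}_j = Y_j^{-1} h^{-1}(\Rot_{\delta_j} - I) h Y_j$. This gives
\[ \|\mathcal{E}_j\| \leq \|Y_j^{-1}\|\,\|h^{-1}\|\,\|\Rot_{\delta_j}-I\|\,\|h\|\,\|Y_j\|. \]
By \eqref{eq:hsv}, $\|h\| = K$ and $\|h^{-1}\| = 1/\ell(h) = 1$. Also $\|\Rot_{\delta}-I\| = |e^{i\delta}-1| = 2|\sin(\delta/2)| \leq |\delta| = \epsilon_j$.

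For $Y_j = g_{j-1}\cdots g_0$, each $g_i = \Rot_{\phi_i}h$ satisfies $\|g_i\| = K$ and $\ell(g_i)=1$. Hence $\|Y_j\|\leq K^j$ by submultiplicativity, and $\ell(Y_j)\geq 1$ by supermultiplicativity of $\ell$, so $\|Y_j^{-1}\| \leq 1$. Combining these gives $\|\mathcal{E}_j\| \leq K^{j+1}\epsilon_j$.

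Note that one could do better, since $\|Y_j\|\|Y_j^{-1}\| = \mathcal{H}(Y_j)$. However, $\mathcal{H}(Y_j)\le K^j$ is all we get from submultiplicativity of $\mathcal{H}$, together with the extra factor $\mathcal{H}(h)=K$. So the stated bound is exactly this crude estimate, and I would write it as $\mathcal{H}(h)\,\mathcal{H}(Y_j)\,\epsilon_j$.

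For $\|S_n - I\|$, I would expand the product $\prod_{j}(I+\mathcal{E}_j)$ as a sum over nonempty subsets of ordered products of the $\mathcal{E}_j$. Submultiplicativity of the norm then gives
\[ \|S_n - I\| \leq \prod_{j=0}^{n-1}(1+\|\mathcal{E}_j\|) - 1. \]
Equivalently, one can induct using $S_n - I = (S_{n-1}-I)(I+\mathcal{E}_{n-1}) + \mathcal{E}_{n-1}$. Then $1+x\le e^x$ bounds this by $e^{s}-1$.

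For the dilatation, I would not try to use $\|S_n - I\|$, since $s$ is not assumed small. Instead I would use submultiplicativity of $\mathcal{H}$ directly: $\mathcal{H}(S_n)\le \prod_j \mathcal{H}(I+\mathcal{E}_j)$, with $\|I+\mathcal{E}_j\| \leq 1+\|\mathcal{E}_j\|$. The obstacle is the lower bound: $\ell(I+\mathcal{E}_j)\ge 1-\|\mathcal{E}_j\|$ fails when $\|\mathcal{E}_j\|\ge 1$.

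The fix I would try is to avoid the perturbative form and use the exact form $I+\mathcal{E}_j = Y_j^{-1}v_jY_j$. Then $\ell(I+\mathcal{E}_j)\geq \ell(Y_j^{-1})\ell(v_j)\ell(Y_j)$, but this reintroduces $\mathcal{H}(Y_j)$, which is undesirable. A cleaner route is to note that $\det(I+\mathcal{E}_j) = \det v_j = 1$. For a $2\times 2$ matrix $M$ of determinant $1$, $\ell(M) = 1/\|M\|$, so
\[ \mathcal{H}(I+\mathcal{E}_j) = \|I+\mathcal{E}_j\|^2 \leq (1+\|\mathcal{E}_j\|)^2 \leq e^{2\|\mathcal{E}_j\|}. \]
Multiplying over $j$ gives $\mathcal{H}(S_n)\le e^{2s}$, with no smallness assumption. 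This determinant observation is the key step I expect to make the argument work. The only care needed there is to confirm that $\det v_j = \det h^{-1}\det\Rot_{\delta_j}\det h = 1$, and that conjugation by $Y_j$ preserves the determinant.
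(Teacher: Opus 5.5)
Your proposal is correct and follows the paper's proof essentially step for step. The three ingredients match: the conjugation bound $\|\mathcal{E}_j\| \le \mathcal{H}(Y_j)\,\|v_j - I\| \le K^{j+1}\epsilon_j$ (your separate bounds $\|Y_j\|\le K^j$ and $\|Y_j^{-1}\|\le 1$ just unpack $\mathcal{H}(Y_j)\le K^j$), the product expansion giving $\|S_n-I\|$, and the observation that $\det(I+\mathcal{E}_j)=\det v_j=1$ forces $\mathcal{H}(I+\mathcal{E}_j)=\|I+\mathcal{E}_j\|^2\le e^{2\|\mathcal{E}_j\|}$, which is what removes any smallness assumption on $s$.
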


\begin{proof}
We first prove the bound on $||\mathcal{E}_j||$. By \eqref{eq:hsv} we have $||h^{-1}||=1$ and $||h||=K$, and $||\Rot_{\delta}-I|| = 2|\sin (\delta/2)| \leq |\delta|$, so $||v_j - I|| \leq K\epsilon_j$. Each factor $\Rot_{\phi_i}h$ of $Y_j$ has $\mathcal{H}(\Rot_{\phi_i}h) = \mathcal{H}(h) = K$, so $\mathcal{H}(Y_j) \leq K^j$ by submultiplicativity, and therefore
\[ ||\mathcal{E}_j||\leq ||Y_j^{-1}|| \cdot ||v_j-I|| \cdot ||Y_j|| = \mathcal{H} (Y_j) \, ||v_j - I|| \leq K^j \cdot K\epsilon_j = K^{j+1}\epsilon_j.\]

The second estimate in \eqref{eq:Sn1} is immediate from $S_n = \prod_{j=0}^{n-1}(I+\mathcal{E}_j)$ and $1+x \leq e^x$. For the first, observe that $I + \mathcal{E}_j = Y_j^{-1}v_jY_j$, so that
\[ S_n = \prod_{j=0}^{n-1} Y_j^{-1}v_jY_j \]
and submultiplicativity of $\mathcal{H}$ gives $\mathcal{H}(S_n) \leq \prod_{j=0}^{n-1} \mathcal{H}(Y_j^{-1}v_jY_j)$. Now $$\det v_j = \det (h)^{-1} \det ( \Rot_{\delta_j}) \det (h) =1,$$ so $\det (Y_j^{-1} v_j Y_j) = 1$. For a real $2\times 2$ matrix $A$ with $\det A = 1$ we have $||A||\ell(A)=1$, and hence $\mathcal{H} (A) = ||A||^2$. Therefore
\[ \log \mathcal{H} (Y_j^{-1}v_jY_j) = 2\log ||I+\mathcal{E}_j || \leq 2\log (1+||\mathcal{E}_j||) \leq 2||\mathcal{E}_j||,\]
with no restriction on the size of $||\mathcal{E}_j||$. Summing over $j$ gives $\log \mathcal{H}(S_n) \leq 2s$, as claimed. We remark that the elementary bound $||S_n - I|| \leq e^s-1$ would only give $\mathcal{H}(S_n) \leq e^s/(2-e^s)$, and only for $s<\log 2$; it is the identity $\det v_j = 1$ that yields a bound without any smallness assumption on $s$.
\end{proof}

\subsection{The derivative of $\Psi$}
\label{sec:derivative}

We now assemble the estimates of this section into a formula for $D_c\Psi$ that is valid at every parameter outside $\mathcal{M}_{K,\theta}$. For $c\in \C\setminus \mathcal{M}_{K,\theta}$ set $\kappa_c := \max \{ |c|,\kappa_0 \}$ and define the \emph{escape time}
\begin{equation}
\label{eq:escape}
m(c) := \min \{ i\geq 0 : |P_i(c)| \geq \kappa_c \},
\end{equation}
which is finite since $P_i(c) \to \infty$, and which is $0$ when $|c| \geq \kappa_0$. Writing $m = m(c)$ and $S := |P_m(c)| \geq \kappa_c$, Corollary \ref{lem:Pescape} with $\kappa = \kappa_0$ shows that $(|P_n(c)|)_{n\geq m}$ is non-decreasing and
\begin{equation}
\label{eq:thmp1}
|P_n(c)| \geq S(S-1)^{2^{n-m}-1} \geq \kappa_0 (\kappa_0-1)^{2^{n-m}-1} \geq 3^{2^{n-m}} \qquad (n \geq m),
\end{equation}
so the orbit grows double exponentially from time $m$ onwards and never returns below $\kappa_c$. Moreover, since $\kappa_c \geq \max\{|c|, a^2+1\}$ we have $\kappa_c^2 \geq (a^2+1)\kappa_c = a^2\kappa_c + \kappa_c \geq a^2|c| + a^2$, so that $\kappa_c \geq a\sqrt{1+|c|}$ and
\begin{equation}
\label{eq:inD}
(c, P_n(c)) \in \mathcal{D} \qquad \text{for all } n \geq m(c);
\end{equation}
in particular Lemma \ref{lem:epsilon} applies at $P_n(c)$ for every $n \geq m(c)$.

We first address the continuity of $\Psi$. This is not immediate from Definition \ref{def:Psi}, because the extension of $\psi_c$ to a domain containing $c$ is constructed in \cite{FF12} separately for each $c$.

\begin{lemma}
\label{lem:Psicont}
$\Psi$ is continuous on $\C\setminus \mathcal{M}_{K,\theta}$.
\end{lemma}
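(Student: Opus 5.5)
The plan is to express $\Psi$ locally, near any fixed $c_0 \notin \mathcal{M}_{K,\theta}$, through the unextended coordinate composed with a branch of the inverse of $f_0^n$, and then read off continuity from the joint continuity in Proposition \ref{prop:bottreg}(a). Fix $c_0\notin\mathcal{M}_{K,\theta}$ and choose $n \geq m(c_0)$ with strict inequalities $|P_n(c_0)| > \kappa_{c_0}$ and $|P_n(c_0)| > a\sqrt{1+|c_0|}$. This is possible since the orbit grows double exponentially by \eqref{eq:thmp1}. Each $P_n$ is a polynomial in $(x,y)$, hence continuous in $c$, so these strict inequalities persist on a small disk $V$ about $c_0$. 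In particular $(c,P_n(c))\in\mathcal{D}$ for all $c\in V$. By \eqref{eq:Mlower} and Corollary \ref{lem:Pescape}, $V\cap \mathcal{M}_{K,\theta}=\emptyset$ as well. On $V$ the function $Q_n(c)=\psi_c(P_n(c))$ is continuous, indeed $C^1$, by Proposition \ref{prop:bottreg}(a), and the functional equation gives $f_0^n(\Psi(c)) = Q_n(c)$.

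The key step is to invert $f_0^n$ continuously. Since $\Psi(c) = \psi_c(c)$ lies in $I(f_0)$ and $f_0$ is a degree two branched cover branched only at $0 \in BO(f_0)$, the map $f_0^n$ is a local homeomorphism on $I(f_0)$. I would show that, for $c$ near $c_0$, $\Psi(c)$ is the lift of $Q_n(c)$ under $f_0^n$ that lies close to $\Psi(c_0)$. To do this, take a neighbourhood $W$ of $\Psi(c_0)$ on which $f_0^n$ is injective, and let $\iota$ be its inverse on $f_0^n(W)$. Then $\iota\circ Q_n$ is continuous near $c_0$, agrees with $\Psi$ at $c_0$, and satisfies $f_0^n\circ(\iota\circ Q_n) = f_0^n\circ \Psi$.

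It remains to identify $\Psi(c)$ with $\iota(Q_n(c))$, and this is the main obstacle. The difficulty is that the extension of $\psi_c$ in \cite{FF12} is built separately for each $c$, by lifting curves, so equality of images under $f_0^n$ does not immediately select the same preimage among the $2^n$ candidates. My first attempt would use the uniqueness clause of \cite[Lemma 6.1]{FF12}. For each $c \in V$, describe the extended domain as $\{G_c > G_{\mathcal{M}}(c_0)/2\}$, say a component containing $c$, on which $\psi_c$ is the unique lift satisfying $f_0^n\circ\psi_c = \psi_c\circ f_c^n$ and agreeing with the unextended coordinate near infinity. Joint continuity of $G_c(z)$ (Lemma \ref{lem:Gcont}) makes these domains vary nicely. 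Then choose a path $\gamma$ from $c$ out to the region $\mathcal{D}$ inside this domain, for instance a gradient-like or radial path in the potential sublevel structure, which can be chosen uniformly for $c\in V$. Along $\gamma$, $\psi_c$ is obtained by lifting $\psi_c\circ f_c^n\circ\gamma$ via $f_0^n$, starting from the known unextended value at the far end of the path. Lifts of curves under the covering $f_0^n|_{I(f_0)}$ depend continuously on the curve and its starting point, and the curves $\psi_c\circ f_c^n\circ\gamma$ depend continuously on $c$ by Proposition \ref{prop:bottreg}(a). Hence the endpoint $\Psi(c)$ depends continuously on $c$, which gives the identity with $\iota\circ Q_n$ near $c_0$.

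If constructing uniform paths is awkward, the fallback is a discreteness argument. The set of preimages of $Q_n(c)$ under $f_0^n$ is discrete and varies continuously, so it suffices to show that $\Psi$ is locally bounded and that any limit point of $\Psi(c_k)$ as $c_k\to c_0$ equals $\Psi(c_0)$. The latter would follow from a uniqueness-of-extension statement applied in the limit, via the convergence $\psi_{c_k}\to\psi_{c_0}$ on $\mathcal{D}$.
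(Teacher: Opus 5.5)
Your strategy matches the paper's: realize the extended domain of $\psi_c$ as a superlevel set of $G_c$, characterize the extension by lifting $\psi_c\circ f_c^n\circ\gamma$ under the covering $f_0^n$, and get continuity in $c$ from homotopy lifting together with Proposition \ref{prop:bottreg}(a) and Lemma \ref{lem:Gcont}. The reduction to $\iota\circ Q_n$ is not needed: once $\Psi$ is continuous, $\Psi=\iota\circ Q_n$ near $c_0$ follows from injectivity of $f_0^n$ on $W$. However, the two steps that carry the content do not work as written.

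\textbf{The level is wrong.} Take $s=G_{\mathcal{M}}(c_0)/2$, and let $c$ be a nearby parameter with $G_{\mathcal{M}}(c)>G_{\mathcal{M}}(c_0)$. Then $G_c(0)=G_{\mathcal{M}}(c)/2>s$, so the critical value $c$ lies in $\{G_c>2s\}$ and $f_c$ is not a covering over that set. In fact $\psi_c$ admits no extension to $\{G_c>s\}$ satisfying the functional equation. A loop in $\{G_c>s\}$ around one of the two components of $f_c^{-1}(\{G_c\le 2s\})$ is sent by $\psi_c\circ f_c$ to a loop winding once about $0$, whereas $f_0=h^2$ doubles winding numbers about $0$. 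Passing to ``a component containing $c$'' does not help, since that component is the whole set and contains $0$. You need one level $s$ with $s<G_{\mathcal{M}}(c)\le 2s$ for \emph{every} $c$ in the neighbourhood. The paper gets this by taking $s$ strictly between $G_{\mathcal{M}}(c_0)/2$ and $G_{\mathcal{M}}(c_0)$ and shrinking the disc. The lifting exponent $n$ must also satisfy $\{G_c>2^ns\}\subset\mathcal{D}$ uniformly in $c$. That way $f_c^n$ sends the whole path, not just its endpoint $c$, into the region where $\psi_c$ is the unextended coordinate. Your $n$, chosen only so that $(c,P_n(c))\in\mathcal{D}$, does not ensure this.

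\textbf{The uniform paths are asserted, not constructed.} Your suggested constructions are not available. $G_c$ is only known to be continuous, so there is no gradient flow. Nothing guarantees that an outward radial ray from a point of $\{G_c>s\}$ stays in that set.

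The missing device, which is the heart of the paper's proof, is to freeze the path. Fix $z_\infty$ with $G_c(z_\infty)$ above the threshold for all nearby $c$, using Lemma \ref{lem:Glower}. Fix a single curve $\beta$ in the connected set $\{G_{c_0}>s\}$ from $z_\infty$ to $c_0$. Let $\gamma_c$ be $\beta$ followed by the segment $[c_0,c]$. Compactness of $\beta$ and joint continuity of $(c,z)\mapsto G_c(z)$ show that $\gamma_c\subset\{G_c>s\}$ for all $c$ near $c_0$. Then $H(c,\tau)=\psi_c(f_c^n(\gamma_c(\tau)))$ is jointly continuous. It lifts under $f_0^n$ with the continuous initial values $\psi_c(z_\infty)$, and uniqueness of path lifting gives $\Psi(c)=\widetilde H(c,1)$.

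Your fallback does not substitute for this. Showing that every limit point of $\Psi(c_k)$ equals $\Psi(c_0)$ is exactly the content of the lemma.
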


\begin{proof}
Fix $c_0 \in \C\setminus \mathcal{M}_{K,\theta}$ and a closed disc $N$ centred at $c_0$ and contained in $\C\setminus\mathcal{M}_{K,\theta}$. The disk $N$ will be shrunk as needed.

We begin by describing the extended domain of $\psi_c$ in terms of $G_c$. By Corollary \ref{cor:bott} there is $T = T(N)$ such that $V_c := \{ G_c > T \} \subset \{ |z| \geq a\sqrt{1+|c|} \}$ for every $c\in N$. Thus $\psi_c$ is defined and injective on $V_c$, satisfies $f_0\circ \psi_c = \psi_c \circ f_c$ there, and $G_c = G_0 \circ \psi_c$ on $V_c$ by \cite[Definition 3.6]{BF25}. 

The image of $\psi_c$ contains a neighbourhood of infinity: for $\varrho \geq a\sqrt{1+|c|}$, Proposition \ref{prop:bottreg}~(d) gives $|\psi_c(z) - z| \leq C'|c|/\varrho$ on the circle $\{|z| = \varrho\}$, so the winding number of $\psi_c(\{|z|=\varrho\})$ about any $w$ with $|w| < \varrho - C'|c|/\varrho$ is $1$, and every such $w$ is in the image. As $N$ is compact, enlarging $T$ we may therefore assume that $\{ G_0 > T\}$ lies in the image of $\psi_c$ for every $c\in N$, so that $\psi_c$ is a homeomorphism from $V_c$ onto $\{G_0>T\}$; moreover $\psi_c(z) \to \infty$ as $z\to\infty$, so $\psi_c$ extends to a homeomorphism from $V_c \cup \{\infty\}$ onto $\{G_0>T\}\cup\{\infty\}$, which is simply connected by Lemma \ref{lem:jordan}. Hence $V_c$ is connected with connected complement. 

Since $G_{\mathcal{M}}$ is continuous, by Lemma \ref{lem:Gcont}, and positive on $N$, we may also choose $T$ so that $T > 2G_{\mathcal{M}}(c_0)$ and $G_{\mathcal{M}}(c_0)$ is not of the form $T2^{-k}$, and then shrink $N$ so that there is a single integer $n \geq 1$ with
\[ s := T2^{-n} < G_{\mathcal{M}}(c) \leq 2s \qquad \text{for all } c\in N.\]
For $0 \leq k<n$, the critical value $c$ of $f_c$ does not lie in $\{G_c > T2^{-k}\}$, and so, as $f_c^{-1}( \{G_c > T2^{-k}\}) = \{G_c > T2^{-(k+1)}\}$ by the functional equation $G_c \circ f_c = 2G_c$, the map $f_c : \{G_c > T2^{-(k+1)}\} \to \{G_c > T2^{-k}\}$ is a two-to-one covering map. Applying \cite[Lemma 6.1]{FF12} $n$ times, exactly as in the proof of \cite[Theorem 2.4(ii)]{FF12} but with $V_c$ in place of the neighbourhood of infinity used there, extends $\psi_c$ to
\[ D_c := \{ G_c > s \} \ni c,\]
and $\Psi(c) = \psi_c(c)$ is the value of this extension. 

All we shall use about the extension is that it is continuous on $D_c$ and satisfies $f_0^n \circ \psi_c = \psi_c \circ f_c^n$ there (the $n$-fold iterate of the conjugacy, which makes sense as $f_c^k(D_c) \subset \{G_c > 2^k s\} \subset D_c$). This characterizes it by path lifting, as follows. The map $f_0(z) = h(z)^2$ vanishes only at $z=0$, so $f_0^n : \C\setminus \{0\} \to \C\setminus \{0\}$ is a covering map of degree $2^n$. For $z\in D_c$, let $\gamma$ be a curve in $D_c$ from a point $z_1 \in V_c$ to $z$, recalling that $D_c$ is connected. Then $f_c^n(\gamma)$ is a curve in $V_c$, so $\psi_c \circ f_c^n \circ \gamma$ is a curve in $\{G_0 >T\} \subset \C\setminus\{0\}$, and $\psi_c\circ \gamma$ is a lift of it under $f_0^n$ with initial point $\psi_c(z_1)$. By uniqueness of path lifting, $\psi_c\circ\gamma$ is \emph{the} lift with this initial point, and in particular $\psi_c(z)$ is its endpoint.

Now let $c_1 \in N$; we show that $\Psi$ is continuous at $c_1$. By Lemma \ref{lem:Glower}, any $z_\infty$ with $|z_\infty| \geq \max\{ \sup_N |c|, 4, e^{T+1}\}$ satisfies $G_c(z_\infty)>T$ for every $c\in N$; fix such a point. Choose a curve $\beta$ in $D_{c_1}$ from $z_\infty$ to $c_1$. Since $\beta$ is compact, $G_{c_1}>s$ on $\beta$, and $G$ is jointly continuous, there is a neighbourhood $N_1 \subset N$ of $c_1$ such that, for every $c \in N_1$, both $\beta$ and the segment $[c_1,c]$ lie in $D_c$. For $c\in N_1$ let $\gamma_c$ be the curve $\beta$ followed by the segment $[c_1,c]$, parametrised by $[0,1]$; it lies in $D_c$, starts at $z_\infty \in V_c$ and ends at $c$. The map
\[ H : N_1 \times [0,1] \to \C\setminus \{0\}, \qquad H(c,\tau) := \psi_c \left ( f_c^n ( \gamma_c(\tau) ) \right ),\]
is continuous, because $f_c^n(\gamma_c(\tau))$ is continuous in $(c,\tau)$ and lies in $V_c \subset \{|z| \geq a\sqrt{1+|c|}\}$, where $\psi$ is jointly continuous by Proposition \ref{prop:bottreg}~(a). Moreover, $c \mapsto \psi_c(z_\infty)$ is continuous for the same reason. By the homotopy lifting property of the covering map $f_0^n$, there is a unique continuous $\widetilde{H} : N_1 \times [0,1] \to \C \setminus \{0\}$ with $f_0^n \circ \widetilde{H} = H$ and $\widetilde{H}(c,0) = \psi_c(z_\infty)$. For each fixed $c$, $\widetilde{H}(c,\cdot)$ is the lift of $\psi_c\circ f_c^n \circ \gamma_c$ with initial point $\psi_c(z_\infty)$, so $\widetilde{H}(c,1) = \psi_c(c) = \Psi(c)$ by the characterisation above. Hence $\Psi = \widetilde{H}(\cdot,1)$ is continuous on $N_1$.
\end{proof}

Next we establish a crucial formula for evaluating the derivative of $\Psi$.

\begin{proposition}
\label{prop:derivative}
Let $c\in \C\setminus \mathcal{M}_{K,\theta}$ and let $m = m(c)$. Then $\Psi$ is $C^1$ in a neighbourhood of $c$, and \eqref{eq:DCPsi} holds for every $n \geq m+1$. Moreover, in the notation of Sections \ref{sec:orientation} and \ref{sec:gap}, the limits $R_\infty = \lim_n R_n$, $S_\infty = \lim_n S_n$ and $\lambda_\infty = \lim_n \lambda_n$, where
\[ \lambda_n := \prod_{j=0}^{n-1} \frac{|h(P_j(c))|}{|h(w_j)|},\]
all exist, with $\lambda_\infty \in (0,\infty)$ and $\det S_\infty = 1$, and
\begin{equation}
\label{eq:formula}
D_c\Psi = \lambda_\infty S_\infty R_\infty .
\end{equation}
In particular $\sign \det D_c\Psi = \sign \det R_\infty$, and $c \mapsto \det D_c \Psi$ is continuous on $\C \setminus \mathcal{M}_{K,\theta}$. Finally, the quantity $s$ of \eqref{eq:s} satisfies
\begin{equation}
\label{eq:sgeneral}
s \leq \pi m K^m + \frac{4C'K^2|c| K^m}{(|P_m(c)|-1)^2}.
\end{equation}
\end{proposition}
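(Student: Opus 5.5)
The approach is to differentiate the functional equation $f_0^n(\Psi(c)) = \psi_c(P_n(c)) = Q_n(c)$ at a fixed $n\geq m+1$. Then we pass to the limit $n\to\infty$ in the resulting factorisation, using the estimates of Sections \ref{sec:orientation} and \ref{sec:gap}.

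\textbf{Step 1: $C^1$ and \eqref{eq:DCPsi}.} Fix $c_0\notin\mathcal{M}_{K,\theta}$ and $n\geq m(c_0)+1$. Since $|P_{m}(c_0)|\geq \kappa_{c_0}$ and the orbit grows strictly afterwards by \eqref{eq:thmp1}, we have $|P_n(c_0)|>\kappa_{c_0}$ strictly. This is an open condition, so $(c,P_n(c))\in\mathcal{D}$ for $c$ near $c_0$. By Proposition \ref{prop:bottreg}(a), $Q_n(c)=\psi_c(P_n(c))$ is then $C^1$ near $c_0$. By Lemma \ref{lem:Psicont}, $\Psi$ is continuous and is a continuous branch of $(f_0^n)^{-1}\circ Q_n$. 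Now $f_0^n$ is a local $C^\infty$ diffeomorphism away from $0$, and $\Psi(c)\neq0$. Hence $\Psi$ locally equals a smooth inverse branch composed with $Q_n$, so it is $C^1$ and \eqref{eq:DCPsi} holds. Continuity of $\det D_c\Psi$ follows.

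\textbf{Step 2: factorisation.} By the chain rule, $D_cQ_n = D_z\psi_c(P_n)\,D_cP_n + D_c\psi_c(P_n)$, where the second term is the partial derivative in the parameter. Write $D_cP_n=\Pi_nR_n$ by \eqref{eq:DcPn}, $\Pi_n = 2^n\prod|h(P_j)|\,Y_n$, and $Df_0^n(\Psi(c))^{-1} = (2^n\prod|h(w_j)|)^{-1}\tilde Y_n^{-1}$. Then
\[ D_c\Psi = \lambda_n\,\tilde Y_n^{-1}D_z\psi_c(P_n)\tilde Y_n\, S_n R_n + Df_0^n(\Psi(c))^{-1}D_c\psi_c(P_n). \]
We show that the conjugated $\tilde Y_n^{-1}D_z\psi_c(P_n)\tilde Y_n\to I$ and that the last term tends to $0$. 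Proposition \ref{prop:bottreg}(c) gives $\|D_z\psi_c(P_n)-I\|=O(|c|/|P_n|^2)$, while $\mathcal{H}(\tilde Y_n)\leq K^n$; since $|P_n|$ grows like $3^{2^{n-m}}$, the product $K^n|P_n|^{-2}\to0$. For the last term, $\|Df_0^n(\Psi(c))^{-1}\|\leq\prod(2|w_j|)^{-1}$, which is super-exponentially small by Lemma \ref{lem:epsilon}, and $D_c\psi_c=O(|P_n|^{-1})$ by Proposition \ref{prop:bottreg}(b). The limit $R_\infty$ exists by Lemma \ref{lem:alg2}. The limit $S_\infty$ exists because $s<\infty$ (shown below) and $S_n=\prod(I+\mathcal{E}_j)$ then converges absolutely. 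For $\lambda_\infty\in(0,\infty)$, note that $|h(w_j)|/|h(P_j)| = |1+\omega_j|$ with $|\omega_j|\leq C|c|/|P_j|^2$ summable, as in the proof of Lemma \ref{lem:epsilon}; the finitely many $j<m$ contribute nonzero finite factors since $w_j\neq0$ and $P_j\neq0$. Also $\det S_n=1$ because $\det v_j=1$. Since the left side of the display is independent of $n$, letting $n\to\infty$ gives \eqref{eq:formula}, and the sign statement follows from $\lambda_\infty>0$ and $\det S_\infty=1$.

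\textbf{Step 3: bound \eqref{eq:sgeneral}.} Split $s=\sum_{j<m}+\sum_{j\geq m}$. For $j<m$, use $\epsilon_j\leq\pi$ and Lemma \ref{lem:Snestimate} to get $\|\mathcal{E}_j\|\leq K^{j+1}\pi$. This is slightly worse than the claimed $\pi mK^m$, so the bound must be sharpened. One option is $\|v_j-I\|\leq\|h^{-1}\|\,\|\Rot-I\|\,\|h\|$ with $\|\Rot_\delta - I\|\leq 2$, which would need care; instead I would use $\mathcal{H}(Y_j)\leq K^j$ together with $\|v_j - I\|\leq K\epsilon_j$, giving $\leq \pi K^{j+1}$. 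Summing, $\sum_{j<m}\pi K^{j+1}$ should be compared with $\pi m K^m$, adjusting $\epsilon_j\leq\pi$ bookkeeping as needed. For $j\geq m$, Lemma \ref{lem:epsilon} gives $\|\mathcal{E}_j\|\leq K^{j+1}\cdot 2C'K|c|/|P_j|^2$. Using \eqref{eq:thmp1}, $|P_j|\geq S(S-1)^{2^{j-m}-1}$, the factor $K^{j-m}$ is dominated by the doubly exponential decay since $S-1\geq3>\sqrt{2K}$ when $\kappa_0\geq1+\sqrt{2K}$. The sum is then geometric with ratio at most $1/2$ and is bounded by $4C'K^2|c|K^m/(S-1)^2$.

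\textbf{Main obstacle.} The main obstacle is the constant-tracking in Step 3, especially matching the exact factor $\pi mK^m$ for the pre-escape terms. The error-term convergence in Step 2 is the other point that needs care, since it requires double-exponential growth to beat $K^n$.
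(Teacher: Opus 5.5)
Your proposal follows the paper's proof essentially step for step: $\Psi$ is a local inverse branch of $f_0^n$ composed with $Q_n$, the factorisation $[Df_0^n(\Psi(c))]^{-1}\Pi_n=\lambda_nS_n$ is used, the error terms are killed by double-exponential growth of $|P_n(c)|$, and $s$ is split at the escape time. The one cosmetic difference is that you absorb $L_n-I$ by conjugating with $\tilde Y_n$ and using $\mathcal{H}(\tilde Y_n)\le K^n$ together with boundedness of $\lambda_n S_n R_n$, whereas the paper bounds $\|\Pi_n\|/\ell(Df_0^n(\Psi(c)))$ directly; both work.

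The ``main obstacle'' you flag in Step~3 is not one. For $0\le j\le m-1$ each term satisfies $K^{j+1}\le K^m$, so $\sum_{j<m}\pi K^{j+1}\le \pi mK^m$ with no sharpening needed, and this is exactly the paper's argument. Also, in the post-escape block the ratio bound should be justified by $S-1\ge\kappa_0-1\ge\sqrt{2K}$ from \eqref{eq:kappa0}, not by $3>\sqrt{2K}$, which fails for $K\ge 9/2$.
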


\begin{proof}
Write $S = |P_m(c)|$ and fix $n \geq m+1$. By \eqref{eq:thmp1} and the monotonicity of $(|P_n(c)|)_{n\geq m}$, 
\[ |P_n(c)| \geq |P_{m+1}(c)| \geq S(S-1) \geq 3S \geq 3\kappa_c > a\sqrt{1+|c|},\] 
so $(c,P_n(c))$ lies in the interior of $\mathcal{D}$, and hence so does $(c',P_n(c'))$ for $c'$ in a neighbourhood $N$ of $c$. On $N$, the map $c'\mapsto Q_n(c') = \psi_{c'}(P_n(c'))$ is $C^1$ by Proposition \ref{prop:bottreg}~(a), since $c'\mapsto P_n(c')$ is polynomial in $\Re c', \Im c'$, and $f_0^n(\Psi(c')) = Q_n(c')$ holds on $N$ by the functional equation. The map $f_0^n$ is a local diffeomorphism at $\Psi(c)$: $f_0$ is a local diffeomorphism away from $0$, and the $f_0$-orbit of $\Psi(c) \in I(f_0)$ never meets $0 \in BO(f_0)$. Let $\Theta$ be a smooth local inverse of $f_0^n$ near $Q_n(c)$ with $\Theta(Q_n(c)) = \Psi(c)$. As $\Psi$ is continuous by Lemma \ref{lem:Psicont}, $\Psi = \Theta \circ Q_n$ on a neighbourhood of $c$, so $\Psi$ is $C^1$ there, and the chain rule gives \eqref{eq:DCPsi}.

Next, the real form of the chain rule gives $D_cQ_n = L_nD_cP_n + J_n$, where, by \eqref{eq:inD} and Proposition \ref{prop:bottreg}~(b),(c),
\[ L_n = D_z\psi_c|_{z=P_n(c)} = I + O(|c|/|P_n(c)|^2), \qquad ||J_n|| = O(|P_n(c)|^{-1}),\]
and $D_cP_n = \Pi_nR_n$ by \eqref{eq:DcPn}. On the dynamical side, $Df_0^n(\Psi(c)) = \prod_{j=n-1}^{0}(B_jh)$ in the notation of Section \ref{sec:gap}, so that, since $\ell(h) = 1$, $\ell$ is supermultiplicative, $w_j \neq 0$, and $|h(w_j)| \geq |w_j| \geq |P_j(c)|/2$ for $j\geq m$ by Lemma \ref{lem:epsilon},
\begin{equation}
\label{eq:ellbound}
\ell(Df_0^n(\Psi(c))) \geq \prod_{j=0}^{n-1} 2|h(w_j)| \geq \Lambda_m \prod_{j=m}^{n-1} |P_j(c)|, \qquad \Lambda_m := \prod_{j=0}^{m-1} 2|w_j| >0.
\end{equation}
Hence
\[ \left|\left| [Df_0^n(\Psi(c))]^{-1} J_n \right|\right| \leq \frac{C}{\Lambda_m} \prod_{j=m}^{n-1} |P_j(c)|^{-1} \cdot |P_n(c)|^{-1} \to 0\]
as $n\to \infty$, by \eqref{eq:thmp1}. Similarly, $||\Pi_n|| \leq \prod_{j<n} 2K^2|P_j(c)|$ and 
\[ ||R_n|| \leq 1 + \sigma_n(c) \leq 1 + \sigma(c) < \infty\] 
by \eqref{eq:Piinv} and Lemma \ref{lem:alg2}, so
\[ \left|\left| [Df_0^n(\Psi(c))]^{-1} (L_n - I)\Pi_nR_n \right|\right| \leq \frac{ \prod_{j<m} 2K^2|P_j(c)| }{\Lambda_m} \cdot (2K^2)^{n-m} \cdot \frac{ C|c|(1+\sigma(c))}{|P_n(c)|^2} \to 0,\]
since $|P_n(c)| \geq 3^{2^{n-m}}$ dominates $(2K^2)^{n-m}$. Therefore, writing 
\[ [Df_0^n(\Psi(c))]^{-1}\Pi_n = (\prod_{j<n}|\beta_j|)^{-1} \tilde Y_n^{-1} (\prod_{j<n}|\alpha_j|)Y_n = \lambda_n S_n,\]
we have
\[ D_c\Psi = \lim_{n\to \infty} [Df_0^n(\Psi(c))]^{-1} \Pi_n R_n = \lim_{n\to\infty} \lambda_n S_n R_n .\]

It remains to show that the three factors converge. First, $R_n \to R_\infty$ by Lemma \ref{lem:alg2}. Second, for $j \geq m$, \eqref{eq:hsv} and Proposition \ref{prop:bottreg}~(d) give
\[ \left | \, |h(P_j(c))| - |h(w_j)| \, \right | \leq K|P_j(c) - w_j| \leq \frac{KC'|c|}{|P_j(c)|},\]
and $|h(w_j)| \geq |P_j(c)|/2$, so
\begin{equation}
\label{eq:lambda}
\frac{ |h(P_j(c))|}{|h(w_j)|} = 1 + O\left ( \frac{ K|c| }{|P_j(c)|^2}\right ), \qquad j \geq m,
\end{equation}
with the errors absolutely summable by \eqref{eq:thmp1}. Moreover, for $j\geq m$ the ratio in \eqref{eq:lambda} lies in $[2/3, 2]$, since $KC'|c|/|P_j(c)|^2 \leq \tfrac12$ by the third requirement in \eqref{eq:kappa0}, so its logarithm is bounded by a constant multiple of the error term. As every factor of $\lambda_n$ is positive, and the first $m$ factors are fixed, $\lambda_n$ converges to some $\lambda_\infty \in (0,\infty)$. Third, we prove \eqref{eq:sgeneral}. Since $\epsilon_j \leq \pi$ always, Lemma \ref{lem:Snestimate} gives, for the pre-escape block,
\[ \sum_{j=0}^{m-1} ||\mathcal{E}_j|| \leq \pi \sum_{j=0}^{m-1} K^{j+1} \leq \pi mK^m,\]
using $\sum_{j<m}K^j \leq mK^{m-1}$, which does not degenerate as $K\to 1$. For the post-escape block, Lemma \ref{lem:epsilon} applies for $j\geq m$, and \eqref{eq:thmp1} gives $|P_j(c)|^2 \geq \Lambda^{2^{j-m+1}}$ with $\Lambda := S-1 \geq \kappa_0 - 1$, so
\[ \sum_{j=m}^{\infty} ||\mathcal{E}_j|| \leq \sum_{j=m}^{\infty} K^{j+1} \frac{2C'K|c|}{|P_j(c)|^2} \leq 2C'K^2|c|K^m \sum_{i=0}^{\infty} K^i \Lambda^{-2^{i+1}}.\]
The ratio of consecutive terms in the last sum is $K\Lambda^{-2^{i+1}} \leq K\Lambda^{-2} \leq \tfrac12$, because $\Lambda \geq \kappa_0 - 1 \geq \sqrt{2K}$ by the second requirement in \eqref{eq:kappa0}. The sum is therefore dominated by twice its first term, which gives \eqref{eq:sgeneral}. In particular $s<\infty$, so by Lemma \ref{lem:Snestimate}, $||S_n - S_{n-1}|| = ||S_{n-1}\mathcal{E}_{n-1}|| \leq e^s||\mathcal{E}_{n-1}||$ is summable and $S_n \to S_\infty$. Finally, $\det (I + \mathcal{E}_j) = \det (Y_j^{-1}v_jY_j) = 1$ for every $j$, so $\det S_n = 1$ for all $n$ and $\det S_\infty = 1$. This proves \eqref{eq:formula}, and $\det D_c\Psi = \lambda_\infty^2 \det R_\infty$ gives the statement about signs. Continuity of $\det D_c\Psi$ follows from $\Psi$ being $C^1$.
\end{proof}

\section{The map $\Psi$ near infinity}

Here, we will prove Theorem \ref{thm:annular}.

\begin{proof}[Proof of Theorem \ref{thm:annular}]
We take
\begin{equation}
\label{eq:R}
R \geq \max \left \{ \kappa_0, \; (1+\pi) C_* \right \},
\end{equation}
where $\kappa_0$ is as in \eqref{eq:kappa0} and $C_*$ is the constant produced at the end of the proof.

By Corollary \ref{lem:Pescape} with $m=0$, $\kappa = \kappa_0$ and $S = |c| \geq R \geq \kappa_0 > 2$, we have
\begin{equation}
\label{eq:thma1}
|P_n(c)| \geq |c|(|c|-1)^{2^n-1} \geq (|c|-1)^{2^n} \geq (R-1)^{2^n}.
\end{equation}
In particular the escape time \eqref{eq:escape} is $m(c) = 0$, and $\kappa := \inf_{i\geq 0} |P_i(c)| = |c| \geq R > 1$, so the last part of Lemma \ref{lem:Rnclose} implies that
\begin{equation}
\label{eq:thma2}
||R_{\infty} - I|| \leq \frac{1}{2|c|-1}, \quad \mathcal{H} (R_{\infty}) \leq \frac{|c|}{|c|-1}.
\end{equation}

By Proposition \ref{prop:derivative}, $\Psi$ is $C^1$ on $\{|c|>R\}$ and $D_c\Psi = \lambda_\infty S_\infty R_\infty$ with $\lambda_\infty >0$ and $\det S_\infty = 1$. Since $\lambda_\infty$ is a positive scalar, the linear dilatations satisfy
\begin{equation}
\label{eq:thma3}
\mathcal{H}(D_c \Psi) \leq \mathcal{H}(S_{\infty}) \mathcal{H}(R_{\infty}),
\end{equation}
and, moreover,
\[ \det D_c\Psi = \lambda_{\infty}^2 \det S_{\infty} \det R_{\infty} > 0.\]
As $m(c)=0$ and $|P_0(c)| = |c|$, the bound \eqref{eq:sgeneral} reads
\begin{equation}
\label{eq:thma4}
s \leq \frac{4C'K^2|c|}{(|c|-1)^2} = O_{K,\theta}(1/|c|);
\end{equation}
we note that the ratio-test condition $(|c|-1)^2 \geq 2K$ used in \eqref{eq:sgeneral} is exactly the second requirement in \eqref{eq:kappa0}. Combining Lemma \ref{lem:Snestimate} with \eqref{eq:thma2}, \eqref{eq:thma3} and \eqref{eq:thma4}, we have
\[ \mathcal{H}( D_c \Psi) \leq \frac{ e^{2s} |c|}{|c|-1} = 1+O_{K,\theta}(1/|c|), \quad \det D_c\Psi >0.\]
Moreover, $||S_{\infty} - I|| \leq e^s - 1 = O_{K,\theta}(1/|c|)$ by \eqref{eq:Sn1}, $||R_{\infty} - I|| \leq (2|c|-1)^{-1}$, and, by \eqref{eq:lambda} and \eqref{eq:thma1},
\[ |\log \lambda_\infty| \leq \sum_{j=0}^{\infty} \frac{CK|c|}{|P_j(c)|^2} \leq CK|c| \sum_{j=0}^{\infty} (|c|-1)^{-2^{j+1}} \leq \frac{2CK|c|}{(|c|-1)^2} = O_{K,\theta}(1/|c|),\]
so that $\lambda_\infty = 1 + O_{K,\theta}(1/|c|)$. Hence
\[ D_c\Psi = I + E(c), \quad ||E(c)|| \leq \frac{C_*(K,\theta) }{|c|}\]
for some constant $C_*$ depending only on $K$ and $\theta$. In particular $D_c\Psi \to I$ as $c\to \infty$, and $\Psi$ is a $C^1$ local homeomorphism on $\{|c| > R\}$ with the dilatation bound above.

For global injectivity, let $c_1,c_2 \in \{ |c| \geq R \}$ with $c_1 \neq c_2$, and put $\varrho = \max \{ |c_1|,|c_2| \}$; say $|c_2| = \varrho$. 
By joining $c_1$ to $c_2$ via a path $\gamma$ that consists of radial line segments and an arc of the circle $\{|c| = \varrho \}$, we can bound the length of $\gamma$ by $(1+\pi)|c_1-c_2|$.
Therefore
\[ |\Psi(c_1) - \Psi (c_2) - (c_1-c_2)| = \left | \int_{\gamma} E(c) \, dc \right | \leq \sup_{\gamma} ||E|| \cdot (1+\pi) |c_1-c_2| \leq \frac{ (1+\pi) C_*}{R} |c_1-c_2| < |c_1-c_2| \]
once $R > (1+\pi) C_*$, which is the second requirement in \eqref{eq:R}. Hence $\Psi(c_1) \neq \Psi(c_2)$, and we conclude that $\Psi$ is injective on $\{|c|>R\}$. Being an injective $C^1$ map with $\det D_c\Psi >0$ and bounded linear dilatation, $\Psi$ is a quasiconformal homeomorphism of $\{|c|>R\}$ onto its image. This completes the proof.
\end{proof}

\section{The obstruction - folds}
\label{sec:fold}

In this section, we will discuss why the map $\Psi$ cannot be continued via the equation $f_0^n(\Psi(c)) = \psi_c(P_n(c))$. Recall first from Lemma \ref{lem:alg1} that
\[ \sign \det D_cP_n = \sign \det R_n, \quad R_n = I + \sum_{j=1}^n \Pi_j^{-1}.\]
Orientation behaviour is produced entirely by the $+I$ term here, that is, by the direct dependence of $f_c$ on $c$. There is no issue when $K=1$, as then $R_n$ is complex-linear, so $\det R_n = |R_n|^2 \geq 0$ and orientation reversal is not possible. 

Our key lemma in this section is the following. It provides a finite checkable condition on whether a parameter reverses orientation.

\begin{lemma}[Frozen Sign Lemma]
\label{lem:frozen}
Let $K\geq 1$, $\theta \in (-\pi/2,\pi/2]$, $c\in \C$ and write $D_n:= D_cP_n(c)$.
Suppose that for some $m\geq 0$ we have $\ell(D_m) \geq 1$ and $|P_m(c)| \geq \max \{ |c|,2 \}$. Then $\det D_n \neq 0$ and
\[ \sign \det D_n = \sign \det D_m\]
for every $n\geq m$.
\end{lemma}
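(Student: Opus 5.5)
Write $D_{n+1} = (A_n h) D_n + I$ by \eqref{eq:DcPrec}, and argue by induction on $n\geq m$. The inductive statement will be that $\ell(D_n)\geq 1$, which implies $\det D_n\neq 0$, and that $\sign\det D_n = \sign\det D_m$. The orbit hypothesis $|P_m(c)|\geq \max\{|c|,2\}$ feeds into Lemma \ref{lem:orbit} (or Corollary \ref{lem:Pescape}) with any $\kappa$ slightly above $2$. If $|P_m(c)|=2$ exactly, I would instead redo the one-line estimate \eqref{eq:Pescape1} directly. This gives $|P_n(c)|\geq |P_m(c)|\geq 2$ for all $n\geq m$, by the monotonicity $|z_{n+1}|\geq |z_n|(|z_n|-1)\geq |z_n|$, which only needs $|z_n|\geq 2$ and $|c|\leq |z_n|$. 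Consequently $\ell(A_nh) = 2|h(P_n(c))|\geq 2|P_n(c)|\geq 4$ for $n\geq m$.

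The key step is to compare $D_{n+1}$ with $M_n := (A_nh)D_n$, which has $\det M_n = 4K|h(P_n(c))|^2\det D_n$. So $M_n$ has the same sign of determinant as $D_n$, and by supermultiplicativity $\ell(M_n)\geq 4\ell(D_n)\geq 4$. Consider the path $t\mapsto M_n + tI$ for $t\in[0,1]$. Its smallest singular value satisfies $\ell(M_n+tI)\geq \ell(M_n)-t\geq 3>0$ (Weyl's inequality, as in the proof of Lemma \ref{lem:Rnclose}). Hence the determinant never vanishes along the path, and $\sign\det D_{n+1} = \sign\det M_n = \sign\det D_n$. Moreover $\ell(D_{n+1})\geq \ell(M_n)-1\geq 4\ell(D_n)-1\geq 3\geq 1$, which closes the induction. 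The base case $n=m$ is the hypothesis $\ell(D_m)\geq 1$, so $\det D_m\neq 0$.

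There is no real obstacle here. The only point needing care is ensuring that the orbit bound $|P_n(c)|\geq 2$ propagates for all $n\geq m$ under the weaker threshold $2$ rather than the $\kappa>2$ of Lemma \ref{lem:orbit}. Even $|P_n(c)|\geq 1$ would suffice, giving $\ell(A_nh)\geq 2$ and $\ell(D_{n+1})\geq 2\ell(D_n)-1\geq 1$. I would therefore verify the propagation directly from $|z_{n+1}|\geq |z_n|^2-|c|\geq |z_n|(|z_n|-1)$, with $|c|\leq 2\leq |z_n|$ or $|c|\leq|z_n|$ as given.
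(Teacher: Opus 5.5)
Your proof is correct and follows the paper's argument. Like the paper, you propagate $|P_n(c)|\geq\max\{|c|,2\}$ via $|P_{n+1}(c)|\geq |P_n(c)|(|P_n(c)|-1)$, obtain $\ell((A_nh)D_n)\geq 4$ from supermultiplicativity, and use Weyl's inequality both for $\ell(D_{n+1})\geq 3$ and to keep the determinant nonzero along a homotopy; your path $t\mapsto M_n+tI$ is the paper's path $X_n(I+tX_n^{-1})$ without $X_n$ factored out, so the sign comparison is the same.
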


\begin{proof}
We have
\begin{equation}
\label{eq:frozen1}
D_{n+1} = (A_n h)D_n + I,\quad D_0 =I,
\end{equation}
where, as in the previous section, $A_n$ is multiplication by $2h(P_n(c))$.

First, if $|P_n(c)| \geq \max \{|c|,2\}$, then by \eqref{eq:hsv}
\[ |P_{n+1}(c)| \geq |h(P_n(c))|^2 - |c| \geq |P_n(c)|^2 - |c| \geq |P_n(c)|^2 - |P_n(c)| \geq |P_n(c)|,\]
using $|P_n(c)| \geq 2$ in the last step, so $|P_{n+1}(c)| \geq |P_n(c)| \geq \max\{|c|,2\}$. Hence this hypothesis propagates for all $n\geq m$.

Next, we claim that if $\ell(D_n) \geq 1$ and $|P_n(c)| \geq \max \{|c|,2\}$, then $\ell (D_{n+1}) \geq 3$. Writing $X_n = A_nhD_n$, we have by \eqref{eq:hsv} and supermultiplicativity of $\ell$,
\[ \ell(X_n) \geq \ell(A_n)\ell(h)\ell(D_n) = 2|h(P_n(c))| \cdot 1 \cdot \ell(D_n) \geq 2|P_n(c)| \geq 4,\]
and therefore, by \eqref{eq:frozen1} and Weyl's inequality,
\[ \ell(D_{n+1}) \geq \ell(X_n) - ||I|| \geq 4-1=3.\]
In particular, both hypotheses propagate, so $\ell(D_n)\geq 1 >0$ for all $n\geq m$, whence $\det D_n \neq 0$ and the sign is well-defined.

Finally, $||X_n^{-1}|| = \ell (X_n)^{-1} \leq \tfrac14$ by the above. Then
\[ D_{n+1} = X_n + I = X_n(I+E_n), \quad E_n := X_n^{-1},\]
where $||E_n|| \leq \tfrac14$. For $t\in [0,1]$,
\[ \ell ( I +tE_n) \geq 1 - t||E_n|| \geq \tfrac34 >0,\]
so $\det (I+tE_n) \neq 0$ along the path, and thus $\det(I+E_n)>0$ as $\det I = 1$. As $\det (A_nh) = 4|h(P_n(c))|^2K >0$, we have
\[ \det D_{n+1} = \det (A_nh) \cdot \det D_n \cdot \det (I+E_n),\]
where the first and third terms on the right are strictly positive. It follows that $\sign \det D_{n+1} = \sign \det D_n$, as required.
\end{proof}

Recall from Lemma \ref{lem:Rnclose} that $\sigma(c)<1$ implies $\det D_cP_n >0$ for all $n$. As an immediate consequence of this, we note the following: orientation reversal forces a quantitatively strong close return of the critical orbit to the critical point.

\begin{corollary}
\label{cor:fold}
Let $c\in \C\setminus \mathcal{M}_{K,\theta}$ and suppose either that $\det D_cP_n(c) <0$ for some $n$, or that $c$ is an orientation reversing parameter. Then $\sigma(c) \geq 1$. In particular $|h(P_i(c))| < 1$, and hence $|P_i(c)|<1$, for some $i\geq 0$; so for every $r>1$, neither $\Omega_r$ nor $\Omega^*$ contains a parameter with $\det D_cP_n <0$ for some $n$, or an orientation reversing parameter.
\end{corollary}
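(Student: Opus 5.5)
The plan is a contrapositive argument: show that $\sigma(c)<1$ forces positive determinants everywhere, and then read off the claims about $|P_i(c)|$ and the sets $\Omega_r$, $\Omega^*$ from the definitions.

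\textbf{Step 1: $\sigma(c)<1$ rules out negative determinants.} Suppose $c\notin\mathcal{M}_{K,\theta}$ and $\sigma(c)<1$.

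\begin{itemize}
\item Since the terms of $\sigma_n$ are positive, $\sigma_n(c)\le\sigma(c)<1$ for every $n$.
\item Lemma \ref{lem:Rnclose} then gives $\det R_n\ge(1-\sigma_n(c))^2>0$. By Lemma \ref{lem:alg1}, $\det D_cP_n>0$ for all $n\ge1$. For $n=0$ we simply have $D_cP_0=I$.
\item Lemma \ref{lem:Rnclose} applied to $R_\infty$ gives $\det R_\infty\ge(1-\sigma(c))^2>0$.
\item Proposition \ref{prop:derivative} gives $\det D_c\Psi=\lambda_\infty^2\det R_\infty>0$, since $\det S_\infty=1$ and $\lambda_\infty\in(0,\infty)$.
\end{itemize}

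So $c$ is not orientation reversing and no $\det D_cP_n$ is negative. Contrapositively, either hypothesis of the corollary forces $\sigma(c)\ge1$. Here $\sigma(c)$ is finite by Lemma \ref{lem:alg2}, but finiteness is not even needed for this direction.

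\textbf{Step 2: some $|h(P_i(c))|<1$.} Suppose instead that $|h(P_i(c))|\ge1$ for every $i$. Then each factor $(2|h(P_i(c))|)^{-1}$ is at most $\tfrac12$, so the $j$-th term of $\sigma$ is at most $2^{-j}$. This gives $\sigma(c)\le\sum_{j\ge1}2^{-j}=1$, with equality only if every $|h(P_i(c))|$ equals $1$.

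Equality is the one place where some care is needed, since $\sigma(c)\ge1$ alone does not exclude it. Equality would force $|P_i(c)|\le|h(P_i(c))|=1$ for all $i$, so the orbit $(P_i(c))$ would be bounded. That contradicts $c\notin\mathcal{M}_{K,\theta}$, since then $P_i(c)\to\infty$. Hence $\sigma(c)<1$, contradicting Step 1. So some $|h(P_i(c))|<1$, and then $|P_i(c)|\le|h(P_i(c))|<1$ by \eqref{eq:hsv}.

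\textbf{Step 3: the sets $\Omega_r$ and $\Omega^*$.} A parameter in $\Omega^*$ has $\sigma<1$ by definition, so it is excluded by Step 1. The inclusion $\Omega_r\subset\Omega^*$ was noted after Definition \ref{def:sigma}, so $\Omega_r$ is excluded as well; alternatively, $|P_i|\ge r>1$ directly contradicts Step 2.

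The only mildly delicate point in the whole argument is the equality case in Step 2. It is resolved by using that $c$ escapes.
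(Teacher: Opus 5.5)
Your proposal is correct and follows essentially the same route as the paper: the contrapositive via Lemma \ref{lem:Rnclose}, Lemma \ref{lem:alg1} and Proposition \ref{prop:derivative}, then the bound $\sigma(c)\le\sum_{j\ge1}2^{-j}=1$, with the equality case excluded because $P_i(c)\to\infty$. Your separate treatment of $n=0$ and of the final statement about $\Omega_r\subset\Omega^*$ just spells out details the paper leaves implicit.
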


\begin{proof}
If $\sigma(c)<1$, then $\det D_cP_n(c) >0$ for all $n$ and $\det R_\infty >0$ by Lemma \ref{lem:Rnclose}, and then $\det D_c\Psi >0$ by Proposition \ref{prop:derivative}. This gives the first assertion. If $|h(P_i(c))| \geq 1$ for all $i\geq 0$, then $\sigma(c) \leq \sum_{j\geq 1}2^{-j} = 1$, with equality only if $|h(P_i(c))|=1$ for every $i$, which is impossible since $P_i(c) \to \infty$; so $\sigma(c)<1$. Hence $\sigma(c) \geq 1$ forces $|h(P_i(c))|<1$ for some $i$, and $|P_i(c)| \leq |h(P_i(c))|$ by \eqref{eq:hsv}.
\end{proof}

Of course, we haven't yet shown that orientation reversing parameters exist. Let us address this now - see Figure \ref{fig:K5}.

\begin{figure}
    \centering
    \includegraphics[width=3in]{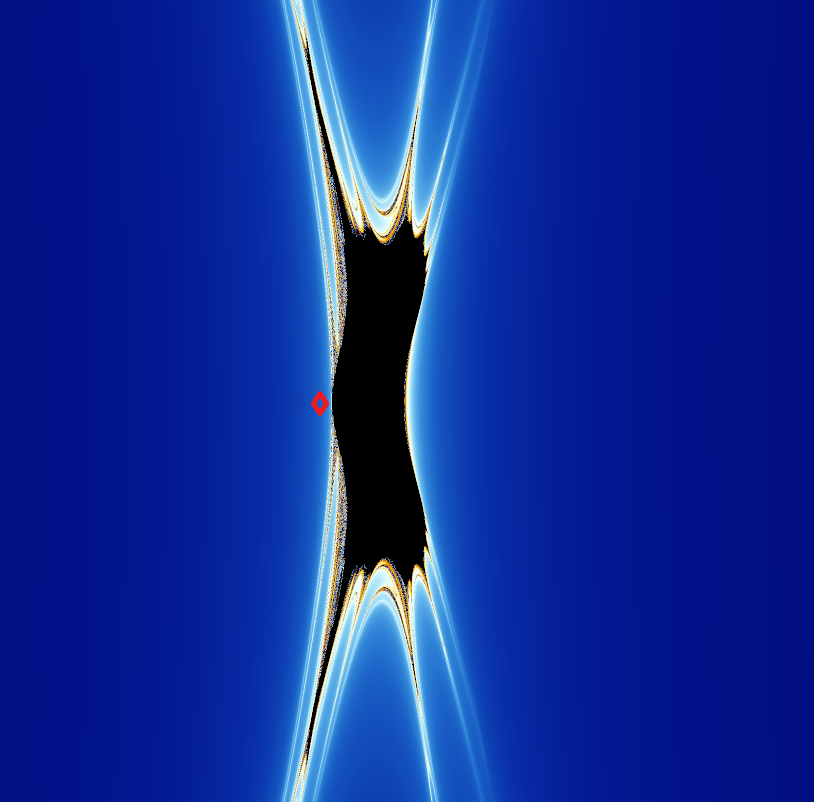}
    \caption{$\mathcal{M}_{5,0}$ with the location of the parameter $c=-1/10$ marked}
    \label{fig:K5}
\end{figure}

\begin{proof}[Proof of Theorem \ref{thm:fold}]
Consider $K=5$, $\theta = 0$ and $c = -1/10$, an example studied in \cite[Section 7]{BF25}. By \cite[Theorem 6.4]{FG10}, $\mathcal{M}_{5,0} \cap \R = [-2/25, 1/100]$ and so $c\notin \mathcal{M}_{5,0}$.

As $c$ and $K$ are real and $\theta = 0$, the orbit $P_n(c)$ is real and $D_cP_n(c)$ is diagonal. Writing
\[ D_cP_n(c) = \begin{pmatrix} p_n & 0\\0 & q_n \end{pmatrix},\]
\eqref{eq:DcPrec} gives $p_0=q_0=1$ and
\[ p_{n+1} = 2K^2P_n(c)p_n+1, \quad q_{n+1} = 2KP_n(c)q_n+1.\]
Then $q_1=0$, $p_1=-4$, $P_1(c) = 3/20$, $p_2 = -29$ and $q_2=1$. Moreover $P_{n+1}(c) = 25P_n(c)^2 - 1/10 > P_n(c)$ whenever $P_n(c) \geq 3/20$, so $(P_n(c))_{n\geq 1}$ is increasing and $P_n(c) \geq 3/20$ for all $n\geq 1$. Thereafter, $2K^2P_n(c) \geq 7.5$ and $2KP_n(c) \geq 1.5$, so that if $p_n \leq -29$ and $q_n \geq 1$ then
\[ p_{n+1} \leq 7.5 \cdot (-29) + 1 < -29, \quad q_{n+1} \geq 1.5\cdot 1 + 1 > 1.\]
By induction, $p_n \leq -29$ and $q_n \geq 1$ for all $n\geq 2$, and hence $\det D_cP_n(c) = p_nq_n < 0$ for all $n\geq 2$.


We now show that $\det R_\infty <0$. Note that $R_n \to R_\infty$ and $\det R_n <0$ for $n\geq 2$ only give $\det R_\infty \leq 0$, so an argument is needed. In the present real situation, $\Pi_n = \operatorname{diag} ( \prod_{i<n} 2K^2P_i(c), \prod_{i<n} 2KP_i(c) )$, so by \eqref{eq:DcPn}, $R_n = \operatorname{diag}(r^{(1)}_n, r^{(2)}_n)$ with
\[ r^{(1)}_n = 1 + \sum_{j=1}^n \prod_{i=0}^{j-1} \frac{1}{2K^2P_i(c)}, \qquad r^{(2)}_n = 1 + \sum_{j=1}^n \prod_{i=0}^{j-1} \frac{1}{2KP_i(c)}.\]
Every product here is negative, since $P_0(c) = -1/10 <0$ and $P_i(c) >0$ for $i\geq 1$, so both sequences are decreasing in $n$. For $r_n^{(2)}$, the first two terms of the sum are $(2KP_0)^{-1} = -1$ and $(2KP_0)^{-1}(2KP_1)^{-1} = -2/3$, so $r^{(2)}_\infty \leq r^{(2)}_2 = -2/3 <0$. For $r_n^{(1)}$, we have $(2K^2P_0)^{-1} = -1/5$ and $2K^2P_i(c) \geq 15/2$ for $i\geq 1$, so the $j$-th product has modulus at most $\tfrac15 (2/15)^{j-1}$, and $r^{(1)}_\infty \geq 1 - \tfrac15 (1 - 2/15)^{-1} = 10/13 >0$. Hence $\det R_\infty = r^{(1)}_\infty r^{(2)}_\infty <0$. In fact, numerically, $r^{(1)}_\infty \approx 0.772$ and $r^{(2)}_\infty \approx -0.814$. Then by Proposition \ref{prop:derivative}, $\det D_c\Psi(-1/10) <0$, so $c=-1/10$ is an orientation reversing parameter.

Now let $U$ be any connected open subset of $\C\setminus \mathcal{M}_{5,0}$ containing $-1/10$ and a neighbourhood of infinity; such sets exist, for instance $\{|c|>R\}$ together with a small neighbourhood of the segment $[-R,-1/10]$, which is a compact subset of $\C \setminus \mathcal{M}_{5,0}$ because $\mathcal{M}_{5,0} \cap \R = [-2/25,1/100]$. By Theorem \ref{thm:annular}, $\det D_c\Psi >0$ on $\{|c|>R\}$, so $\det D_c \Psi$ takes both signs on $U$ and $\Psi$ has a fold in $U$. As explained in the introduction, $\Psi$ is therefore not injective on $U$, and so not injective on $\C\setminus\mathcal{M}_{5,0}$. In particular it is not a quasiconformal map of $\C\setminus\mathcal{M}_{5,0}$. Finally, since $\det D_c\Psi$ is continuous on $(-\infty,-2/25) \subset \C\setminus\mathcal{M}_{5,0}$ by Proposition \ref{prop:derivative}, it vanishes at some point of $(-R,-1/10)$, so the fold locus meets the negative real axis.
\end{proof}

One could ask whether the fact that $K$ is relatively large plays into the existence of folds. In quasiregular dynamics, the condition that the degree is larger than the maximal dilatation, in this case $K<2$, sometimes yields better behaviour. However, numerical investigation reveals orientation reversing parameters for every $K$ tested in the range $[1.015,5]$, with $\theta=0$. For example, if $K=1.4$ and $\theta = 0$, then $c = -0.399492+0.401356 i$ satisfies the hypotheses of Lemma \ref{lem:frozen} at $m=148$ with $\det D_cP_m<0$, see Figure \ref{fig:K1point4}, and similarly for $K=1.015$ and $c=-0.722073-0.143868i$ at $m=3439$, see Figure \ref{fig:K1point015}. 

\begin{figure}
\begin{subfigure}[h]{0.4\linewidth}
\includegraphics[width=\linewidth]{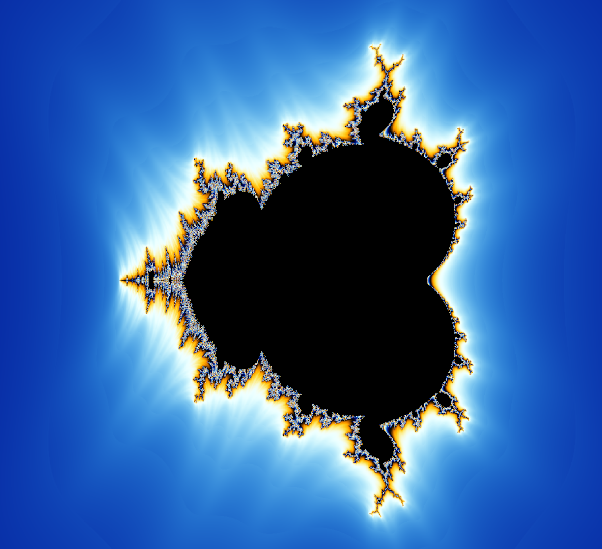}
\end{subfigure}
\hfill
\begin{subfigure}[h]{0.5\linewidth}
\includegraphics[width=\linewidth]{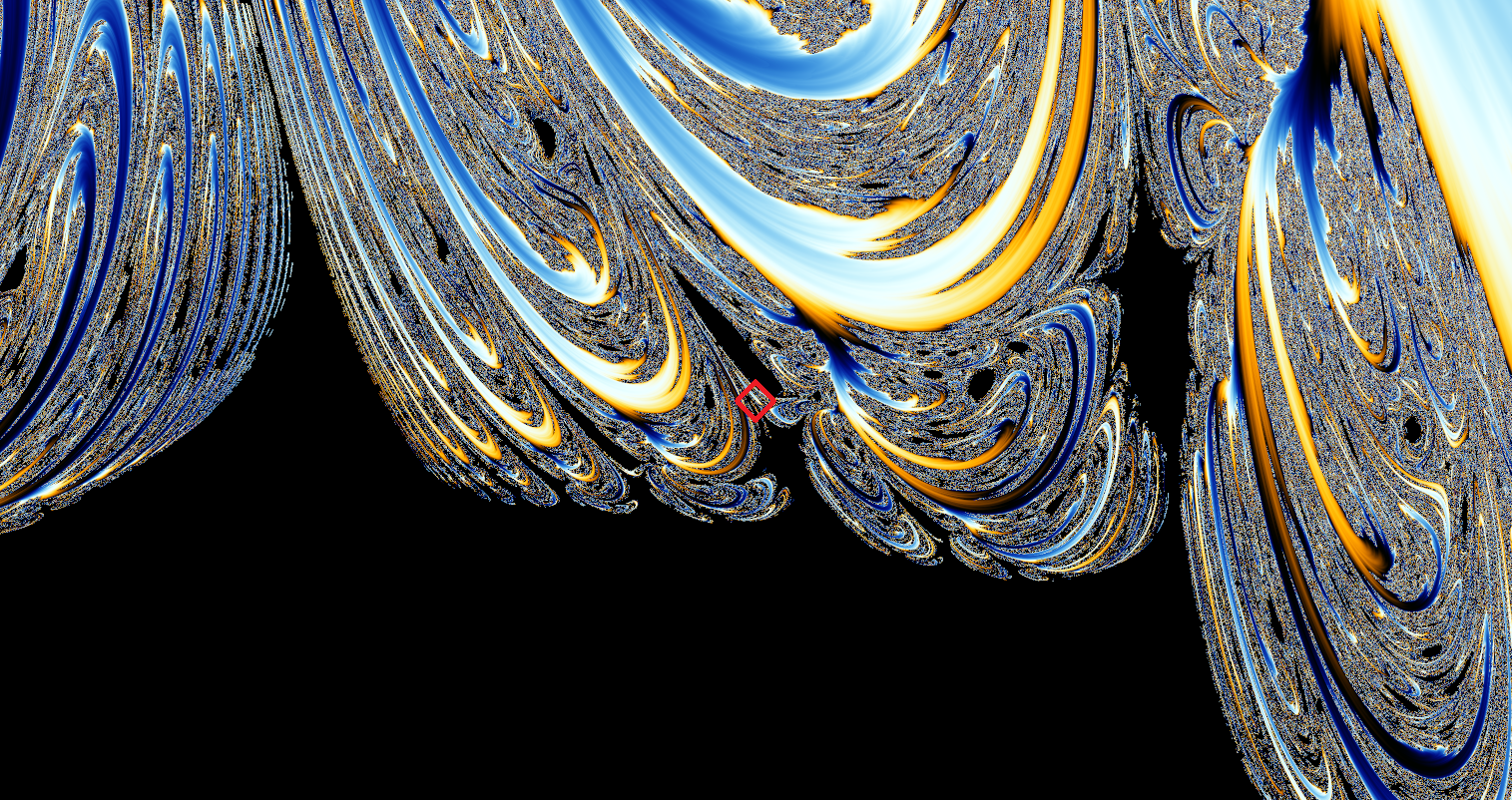}
\end{subfigure}%
\caption{On the left, $\mathcal{M}_{1.4,0}$ and on the right, a zoom near $c=-0.399492+0.401356 i$, marked.}
\label{fig:K1point4}
\end{figure}

\begin{figure}
\begin{subfigure}[h]{0.4\linewidth}
\includegraphics[width=\linewidth]{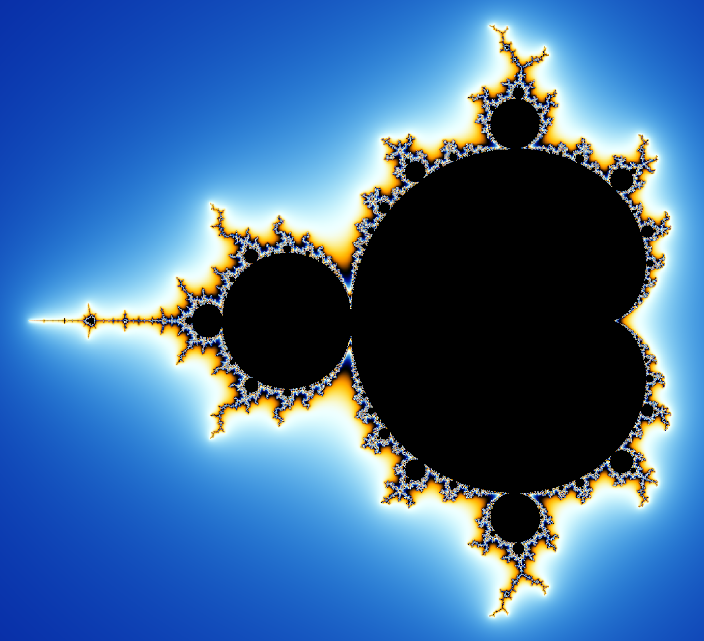}
\end{subfigure}
\hfill
\begin{subfigure}[h]{0.5\linewidth}
\includegraphics[width=\linewidth]{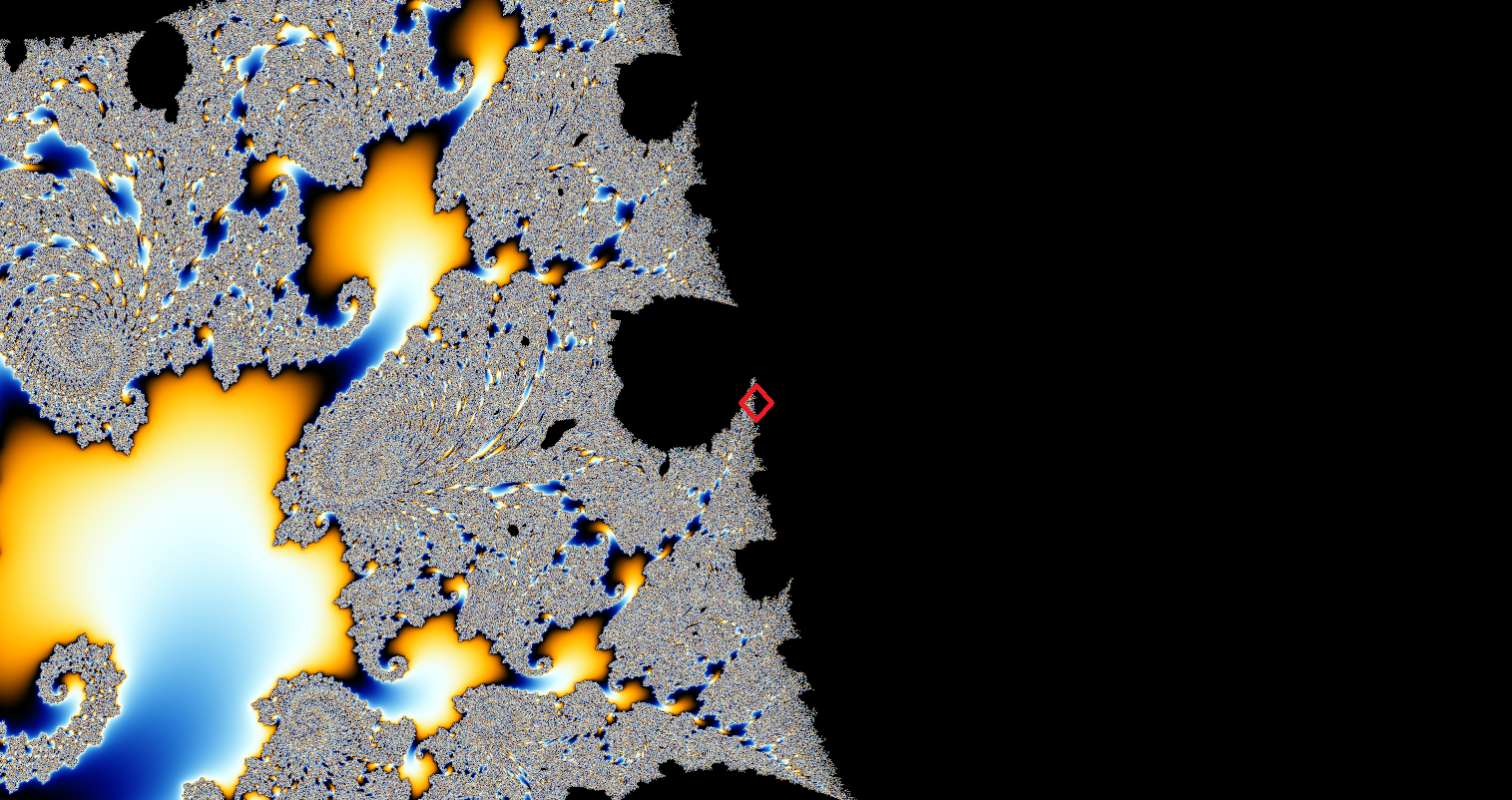}
\end{subfigure}%
\caption{On the left, $\mathcal{M}_{1.015,0}$ and on the right, a zoom near $c=-0.722073-0.143868i$, marked.}
\label{fig:K1point015}
\end{figure}


\section{Green's formulation of the uniformizing map}
\label{sec:green}

In this section we prove Theorem \ref{thm:potential}. Before giving the proof, we comment on the role of the parameter $r$.
If $1<r_1<r_2$ then $\Omega_{r_2} \subset \Omega_{r_1}$, and hence
$t_*(r_1) \leq t_*(r_2)$; so decreasing $r$ towards $1$ allows us to
take $t$ smaller, at the cost of the dilatation bound $r/(r-1)$
deteriorating. This trade-off is unavoidable: by Theorem \ref{thm:fold}
there is an orientation reversing parameter $c_0$ for $K=5$, $\theta=0$,
and by Corollary \ref{cor:fold} we have $\inf_i |P_i(c_0)| \leq 1$, so
$c_0 \notin \Omega_r$ for every $r>1$. As $c_0 \notin \mathcal{M}_{5,0}$
we have $G_{\mathcal{M}}(c_0) >0$, and therefore
\[ t_*(r) \geq G_{\mathcal{M}}(c_0) \quad \text{for every } r>1,
\qquad \text{so} \qquad t_0 \geq G_{\mathcal{M}}(c_0) >0.\]
Numerically, $G_{\mathcal{M}}(-1/10) \approx 0.5844$ when $K=5$ and
$\theta = 0$. In particular, Theorem \ref{thm:potential} cannot be
extended to all of $\C \setminus \mathcal{M}_{K,\theta}$, which is
consistent with Theorem \ref{thm:fold}.

The following lemma shows that $t_0>0$ for every $K\geq 1$ when $\theta = 0$, and that $t_0$ stays bounded away from $0$ as $K\to 1$, even though for $K=1$ the map $\Psi$ is a conformal isomorphism of the whole exterior of $\mathcal{M}$. It also makes precise the remark, made after Lemma \ref{lem:Rnclose}, that the condition $\sigma<1$ is sufficient but not necessary for orientation preservation.

\begin{lemma}
\label{lem:tzero}
Let $\theta = 0$, $K\geq 1$ (with the convention that $h = \mathrm{id}$ when $K=1$) and $c_K := 1/(2K)$. Then $c_K \notin \mathcal{M}_{K,0}$ and $\sigma(c_K) \geq 1$, so that $t_0 \geq G_{\mathcal{M}}(c_K) >0$. Moreover, $G_{\mathcal{M}}(c_K) \geq 1/26$ for $1 \leq K \leq 2$.
\end{lemma}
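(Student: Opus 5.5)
The plan is to exploit that for $\theta=0$ and real $c$ everything is real and explicit. Here $h(x)=Kx$ for real $x$. With $c=c_K=1/(2K)>0$ the orbit is real and positive, given by $P_0=c_K$ and $P_{n+1}=K^2P_n^2+c_K$.

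\emph{Step 1: $c_K\notin\mathcal{M}_{K,0}$.} The real map $x\mapsto K^2x^2+c_K$ has a fixed point only if $1-4K^2c_K\ge 0$, that is, only if $2K\le 1$. This fails for $K\ge1$, so $K^2x^2-x+c_K>0$ for all real $x$. Hence $(P_n)$ is strictly increasing. It cannot converge, since a limit would be a fixed point, so $P_n\to\infty$ and $c_K\notin\mathcal{M}_{K,0}$.

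\emph{Step 2: $\sigma(c_K)\ge1$ and $t_0\ge G_{\mathcal{M}}(c_K)>0$.} The first term of $\sigma(c_K)$ is $(2|h(P_0)|)^{-1}=(2K\cdot\tfrac1{2K})^{-1}=1$, and all terms are positive, so $\sigma(c_K)\ge1$ and $c_K\notin\Omega^*$. Now suppose $t$ is admissible in \eqref{eq:t0} but $t<G_{\mathcal{M}}(c_K)$. Taking $s=t$ gives $c_K\in\{G_{\mathcal{M}}>s\}\subset\Omega^*$, a contradiction. So every admissible $t$ satisfies $t\ge G_{\mathcal{M}}(c_K)$, and hence $t_0\ge G_{\mathcal{M}}(c_K)$. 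Positivity holds because $G_{\mathcal{M}}>0$ off $\mathcal{M}_{K,\theta}$.

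\emph{Step 3: the bound $G_{\mathcal{M}}(c_K)\ge 1/26$ for $1\le K\le2$.} This is the main obstacle, since the estimate has to be uniform in $K$. The naive comparison $P_{n+1}\ge P_n^2+1/4$ is useless, because it is the parabolic non-escaping orbit. Instead I will rescale by setting $y_n=KP_n$. Then $y_0=1/2$ and
\[ y_{n+1}=Ky_n^2+\tfrac12\ \ge\ y_n^2+\tfrac12 .\]
By induction, using monotonicity of $y\mapsto y^2+1/2$ on $y>0$, $y_n$ dominates the orbit of $1/2$ under $y\mapsto y^2+\tfrac12$. That orbit is $0.5,\,0.75,\,1.0625,\,1.628\ldots,\,3.15\ldots,\,10.4\ldots,\,109\ldots$. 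Hence $P_6\ge y_6/K\ge y_6/2>54$.

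Then I apply Lemma \ref{lem:Glower} to $z=P_6$ with $\kappa=3$. This is valid since $|P_6|\ge\max\{c_K,3\}$, because $c_K\le1/2$. It gives
\[ G_c(P_6)\ge\log 54-\tfrac{2}{54}>3.9 .\]
Finally, $P_6=f_c^6(c)$ and the functional equation give $G_{\mathcal{M}}(c_K)=2^{-6}G_c(P_6)>3.9/64>1/26$. I would write out the first few $y_n$ with rigorous rational lower bounds, for example $y_3\ge 1.06$, $y_4\ge1.62$, $y_5\ge3.1$, $y_6\ge10$ and $y_7\ge100$, rounding down at each step. If needed, I would use one more iterate to gain slack.
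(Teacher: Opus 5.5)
Your proof is correct and is essentially the paper's argument: the same rescaling $y_n = KP_n$, the same comparison with the orbit of $\tfrac12$ under $y\mapsto y^2+\tfrac12$, the same observation that the first term of $\sigma(c_K)$ equals $1$, and the same use of Lemma~\ref{lem:Glower}. The only differences are that you get escape from the absence of real fixed points (the paper instead uses $z_{n+1}-z_n\ge\tfrac14$), and you stop at $P_6>54$ with $\kappa=3$ rather than at $P_5\ge 5.22$ with $\kappa=5$.

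One caution about your closing list of rational bounds: it is shifted by one index, and it should read $y_2\ge 1.06$, $y_3\ge 1.62$, $y_4\ge 3.1$, $y_5\ge 10$, $y_6\ge 100$. Taken literally, your list gives only $P_6\ge 5$, and the estimate fails. The extra iterate would not rescue it either, since $2^{-7}(\log 50 - \tfrac{2}{50})\approx 0.030<\tfrac1{26}$.
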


\begin{proof}
Everything is real, and $h(x) = Kx$ for real $x$. Set $y_n := h(P_n(c_K)) = KP_n(c_K)$. Then $y_0 = 1/2$ and $y_{n+1} = K(y_n^2 + c_K) = Ky_n^2 + \tfrac12 \geq y_n^2 + \tfrac12$. Let $z_0 = 1/2$ and $z_{n+1} = z_n^2 + \tfrac12$; then $z_{n+1} - z_n = (z_n - \tfrac12)^2 + \tfrac14 \geq \tfrac14$, so $z_n \to \infty$, and a direct computation gives $z_1 = 3/4$, $z_2 = 17/16$, $z_3 \approx 1.6289$, $z_4 \approx 3.1533$ and $z_5 \approx 10.4435$. By induction $y_n \geq z_n$, so $P_n(c_K) = y_n/K \to \infty$ and $c_K\notin \mathcal{M}_{K,0}$. The first term of $\sigma(c_K)$ is $(2|h(c_K)|)^{-1} = (2y_0)^{-1} = 1$, so $\sigma(c_K) \geq 1$ and $c_K \notin \Omega^*$. Hence $\{G_{\mathcal{M}} > s\} \not\subset \Omega^*$ for every $s < G_{\mathcal{M}}(c_K)$, which gives $t_0 \geq G_{\mathcal{M}}(c_K)$, and $G_{\mathcal{M}}(c_K)>0$ as $c_K \notin \mathcal{M}_{K,0}$.

If $1\leq K \leq 2$, then $P_5(c_K) = y_5/K \geq z_5/2 \geq 5.22 \geq \max \{ |c_K|, 5\}$, so Lemma \ref{lem:Glower} with $\kappa = 5$ gives $G_{c_K}(P_5(c_K)) \geq \log 5.22 - 2/5.22 > 1.26$, and therefore $G_{\mathcal{M}}(c_K) = 2^{-5} G_{c_K}(P_5(c_K)) > 1.26/32 > 1/26$.
\end{proof}

\begin{proof}[Proof of Theorem \ref{thm:potential}]
Recall $\kappa_0$ from \eqref{eq:kappa0}, $\kappa_c = \max\{|c|,\kappa_0\}$ and the escape time $m(c)$ from \eqref{eq:escape}. If $|c| \geq \kappa_0$ then $m(c) = 0$. If $|c| < \kappa_0$, then $\kappa_c = \kappa_0$, and from $G_c(P_i(c)) = 2^iG_{\mathcal{M}}(c)$ and Corollary \ref{cor:bott} we see that $|P_i(c)| \geq \kappa_0 = \kappa_c$ as soon as $2^iG_{\mathcal{M}}(c) > \log \kappa_0 + \log (K^2+1) =: A$. Hence on $\{G_{\mathcal{M}}>t\}$,
\[ m(c) \leq M(t) := \max \left \{ 1, \; \left \lceil \log_2 \frac{A}{t} \right \rceil +1 \right \} \leq 2 + \log_2^+ \frac{A}{t}.\]
In particular, the escape time is at most logarithmic in $1/t$.

We first show that $\sigma_t<1$. Fix $t'\in (t_0,t)$. Then $\{G_{\mathcal{M}} \geq t\} \subset \{G_{\mathcal{M}}>t'\} \subset \Omega^*$, so $\sigma<1$ on $\{G_{\mathcal{M}}\geq t\}$. The function $\sigma$ is continuous on $\C\setminus \mathcal{M}_{K,\theta}$: given $c \notin \mathcal{M}_{K,\theta}$, \eqref{eq:thmp1} gives $|P_{m(c)+1}(c)| \geq 3\kappa_c > \kappa_c$, and this strict inequality persists for $c'$ in a neighbourhood of $c$, so Corollary \ref{lem:Pescape} applied at time $m(c)+1$ gives $|P_n(c')| \geq 3^{2^{n-m(c)-1}}$ for all $n\geq m(c)+1$ and all $c'$ in that neighbourhood; hence the series defining $\sigma$ converges uniformly there, and its terms are continuous since $P_i(c') \neq 0$. Put $\varrho := \max\{\kappa_0, e^{t+C_1}\}$, with $C_1$ from \eqref{eq:Mlower}. If $|c| \geq \varrho$, then Corollary \ref{lem:Pescape} with $m=0$ and $\kappa = \kappa_0$ shows that $(|P_i(c)|)_{i\geq 0}$ is non-decreasing, so $\inf_i|P_i(c)| = |c| \geq \kappa_0 \geq 4$ and $\sigma(c) \leq (2|c|-1)^{-1} \leq 1/7$ by Lemma \ref{lem:Rnclose}. The remaining set $\{G_{\mathcal{M}} \geq t\} \cap \{|c| \leq \varrho\}$ is closed by Lemma \ref{lem:Gcont}, hence compact, and it is contained in $\C\setminus \mathcal{M}_{K,\theta}$ as $G_{\mathcal{M}}$ vanishes on $\mathcal{M}_{K,\theta}$, so $\sigma$ attains a maximum less than $1$ on it. Hence $\sigma_t = \sup_{\{G_{\mathcal{M}}>t\}}\sigma <1$, as claimed, and if $t>t_*(r)$ then $\{G_{\mathcal{M}}>t\} \subset \Omega_r$ gives $\sigma_t \leq (2r-1)^{-1}$ by Lemma \ref{lem:Rnclose}.

By Proposition \ref{prop:derivative}, $\Psi$ is $C^1$ on $\C \setminus \mathcal{M}_{K,\theta}$ and $D_c\Psi = \lambda_\infty S_\infty R_\infty$ with $\lambda_\infty>0$ and $\det S_\infty = 1$. On $\{G_{\mathcal{M}}>t\}$ we have $\sigma(c) \leq \sigma_t<1$, so Lemma \ref{lem:Rnclose} gives
\begin{equation}
\label{eq:thmp2}
||R_n- I|| \leq \sigma_t, \quad \det R_\infty > 0, \quad \mathcal{H}(R_{\infty}) \leq \frac{1+\sigma_t}{1-\sigma_t}.
\end{equation}
We note in particular that $\{G_{\mathcal{M}}>t\}$ contains no orientation reversing parameters, by Corollary \ref{cor:fold}, which is why $t>t_0$ is the correct hypothesis for Theorem \ref{thm:potential}.

We now estimate $s$ on $\{G_{\mathcal{M}}>t\}$ using \eqref{eq:sgeneral}, with $m = m(c) \leq M(t)$ and $S = |P_m(c)|$. If $|c| \geq \kappa_0$ then $m=0$, $S = |c| \geq 4$, and $|c|/(S-1)^2 \leq 4/9$; if $|c|<\kappa_0$ then $S-1 \geq \kappa_0-1 \geq 3$ and $|c|/(S-1)^2 \leq \kappa_0/9$. Hence
\[ s \leq \left ( \pi M(t) + C''(K,\theta) \right ) K^{M(t)}, \qquad C'' := \tfrac49 C'K^2\kappa_0 .\]
Since $M(t) \leq 2 + \log_2^+(A/t)$, we have $K^{M(t)} \leq K^2 \max\{1, (A/t)^{\log_2 K}\}$, and as $A \geq \log 4 + \log 2 > 1$ this gives the bound
\[ s(t) \leq C(K,\theta) \left ( 1 + \log^+ \tfrac{1}{t} \right ) \max \left \{ 1, t^{-\log_2 K} \right \} \]
stated in the theorem. By Lemma \ref{lem:Snestimate}, whose conclusion holds for all $s$ with no smallness assumption, $\mathcal{H}(S_{\infty}) \leq e^{2s}$. Combining with \eqref{eq:thmp2}, we obtain
\[ \mathcal{H}(D_c\Psi) \leq \frac{1+\sigma_t}{1-\sigma_t}\,e^{2s}, \quad \det D_c \Psi = \lambda_\infty^2 \det R_\infty > 0.\]

We next show that $\Psi$ is a homeomorphism from $\{G_{\mathcal{M}} > t \}$ onto $\{G_0>t\}$. By the above, $\Psi$ is $C^1$ with $\det D_c \Psi >0$ on $\{G_{\mathcal{M}}>t\}$, so it is a local homeomorphism there, and $\mathcal{H}(D_c\Psi)$ is bounded by a function of $t$. Next, $G_c = G_0 \circ \psi_c$ on a neighbourhood of infinity by \cite[Definition 3.6]{BF25}, so for $n$ large,
\[ G_{\mathcal{M}}(c) = G_c(c) = 2^{-n} G_c(P_n(c)) = 2^{-n} G_0 ( \psi_c(P_n(c)) ) = 2^{-n} G_0( f_0^n(\Psi(c))) = G_0(\Psi(c)),\]
using the functional equations of $G_c$, of the extended $\psi_c$, and of $G_0$. Hence $\Psi ( \{G_\mathcal{M} > t \}) \subset \{G_0 > t \}$.

For properness, suppose $\Psi (c_n) \to w$ with $G_0(w) >t$. Then $G_{\mathcal{M}}(c_n) = G_0(\Psi(c_n)) \to G_0(w)$, so eventually $G_{\mathcal{M}}(c_n) > t+\delta$ for some $\delta>0$; and $G_{\mathcal{M}}(c_n) \leq \log \max\{|c_n|,1\} + \log(K^2+1)$ together with \eqref{eq:Mlower} shows that $(c_n)$ is bounded. Any limit point $c$ satisfies $G_{\mathcal{M}}(c) \geq t+\delta$ by Lemma \ref{lem:Gcont}, hence lies in the open set $\{G_{\mathcal{M}}>t\}$, and $\Psi(c) = w$ by continuity of $\Psi$. We conclude that $\Psi$ is proper, and a proper local homeomorphism is a covering map onto its image; since $\{G_0>t\}$ is connected by Lemma \ref{lem:jordan}, the image is all of $\{G_0>t\}$.

We claim $\{ G_{\mathcal{M}} > t \}$ is connected. The restriction of $\Psi$ to any connected component of $\{G_{\mathcal{M}}>t\}$ is again a proper local homeomorphism, hence a covering map onto the connected set $\{G_0>t\}$, on which $G_0$ is unbounded. Thus $G_{\mathcal{M}}$, and therefore $|c|$ by Corollary \ref{cor:bott}, is unbounded on each component. Set $\varrho' := \max\{\varrho, R\}$, with $R$ from Theorem \ref{thm:annular}. Then $\{|c| > \varrho'\} \subset \{G_{\mathcal{M}}>t\}$ by \eqref{eq:Mlower}, and this set is connected, so it lies in a single component, which every component therefore meets. So there is only one component.

Finally, a covering map over a connected base has constant degree. Let $w\in \{G_0>t\}$ with $G_0(w) > \log \varrho' + \log(K^2+1)$. If $\Psi(c) = w$ then $G_{\mathcal{M}}(c) = G_0(w)$, so $|c|>\varrho' \geq R$ by Corollary \ref{cor:bott}, and $\Psi$ is injective on $\{|c|>R\}$ by Theorem \ref{thm:annular}. Thus $w$ has exactly one preimage, the degree is $1$, and $\Psi$ is a homeomorphism onto $\{G_0>t\}$. Being a $C^1$ homeomorphism with bounded linear dilatation, $\Psi$ is quasiconformal. This completes the proof.
\end{proof}

\begin{proof}[Proof of Corollary \ref{cor:potential}]
Let $t>t_0$ and pick $t' \in (t_0,t)$. By Theorem \ref{thm:potential} applied at $t'$, $\Psi$ is a homeomorphism from $\{ G_{\mathcal{M}} >t' \}$ onto $\{G_0 >t' \}$, and $G_0 \circ \Psi = G_{\mathcal{M}}$. Hence $\gamma := \{G_{\mathcal{M}} = t\} = \Psi^{-1}(e^t\partial I(f_0))$ is a Jordan curve by Lemma \ref{lem:jordan}. 

Now $\{G_{\mathcal{M}} > t\} \cup \{\infty\}$ is a connected subset of $\widehat{\C} \setminus \gamma$: it is connected by Theorem \ref{thm:potential}, and it contains a neighbourhood of $\infty$ by \eqref{eq:Mlower}. It is open, and it is also closed in $\widehat{\C}\setminus \gamma$, because its closure in $\widehat{\C}$ is contained in $\{G_{\mathcal{M}} \geq t\} \cup \{\infty\} = \{G_{\mathcal{M}}>t\} \cup \gamma \cup \{\infty\}$ by Lemma \ref{lem:Gcont}. Hence it is the unbounded complementary component of $\gamma$, and $\{G_{\mathcal{M}} \leq t\} = \C \setminus \{G_{\mathcal{M}}>t\}$ is the closure of the bounded complementary component, which is a closed topological disk by the Schoenflies theorem. It contains $\mathcal{M}_{K,\theta}$, since $G_{\mathcal{M}}$ vanishes there. The external rays and equipotentials at potentials in $(t',\infty)$ are the preimages under $\Psi|_{\{G_{\mathcal{M}}>t'\}}$ of the radial arcs $\{ re^{i\phi} : r > e^{t'}|b_{K,\theta}(\phi)| \}$ and of the curves \eqref{eq:equipot}; as $t'$ may be taken arbitrarily close to $t_0$, this gives rays and equipotentials at all potentials greater than $t_0$.
\end{proof}

\end{document}